\DeclareMathAlphabet{\mathpzc}{OT1}{pzc}{m}{it}
\newtheoremstyle{note}{11pt}{11pt}{}{}{\bfseries}{.}{.5em}{}
\newtheorem{theo}[equation]{Theorem}
\newtheorem{prop}[equation]{Proposition}
\newtheorem{defin}[equation]{Definition}
\newtheorem{conj}[equation]{Conjecture}
\newtheorem{rem}[equation]{Remark}
\numberwithin{equation}{section}
\newtheorem{lemma}[equation]{Lemma}
\newcommand{\Qb}{\overline{\mathbb{Q}}}
\newcommand{\Q}{\mathbb{Q}}
\newcommand{\Z}{\mathbb{Z}}
\newcommand{\R}{\mathbb{R}}
\newcommand{\C}{\mathbb{C}}
\newcommand{\X}{\mathcal{X}}
\newcommand{\Ll}{\mathcal{L}}
\newcommand{\oo}{\mathcal{O}}
\newcommand{\h}{\mathcal{H}}
\newcommand{\rr}{\mathcal{R}}
\newcommand{\W}{\mathcal{W}}
\newcommand{\U}{\mathcal{U}}
\newcommand{\G}{\Gamma}
\newcommand{\eps}{\varepsilon}
\newcommand{\cycar}{{\chi_{\mathrm{cycl}}}}
\newcommand{\st}{\mathrm{st}}
\newcommand{\cris}{\mathrm{cris}}
\newcommand{\spin}{\mathrm{spin}}
\newcommand{\Ad}{\mathrm{Ad}}
\newcommand{\NN}{\mathrm{N}}
\newcommand{\boldD}{\mathbf{D}}
\newcommand{\boldB}{\mathbf{B}}
\newcommand{\pfrak}{\mathfrak{p}}
\newcommand{\lgr}{\left\{}
\newcommand{\rgr}{\right\}}
\newcommand{\Addresses}{{% additional braces for segregating \footnotesize
  \bigskip
  \footnotesize

  G.~Rosso,\\
  
\textsc{Department of Mathematics, KU Leuven\\
Celestijneenlan 200B,\\
3001 Heverlee, Belgium\\}

\textsc{Department of Mathematics, Columbia University\\
2990 Broadway, \\
New York, NY 10027\\}

  {giovanni.rosso@wis.kuleuven.be}\\

}}
\begin{document}
\title{\texorpdfstring{$\mathcal{L}$-invariant for Siegel-Hilbert forms}{L-invariant for Siegel-Hilbert forms}}
\author{Giovanni Rosso 
\footnote{PhD Fellowship of Fund for Scientific Research - Flanders, partially supported by a FWO travel  grant (V4.260.14N) and an ANR grant  (ANR-10-BLANC 0114 ArShiFo), 
%KU Leuven, Department of Mathematics, Celestijnenlaan 200B - box 2400, BE - 3001 Heverlee,\\
%D\'epartement de Math\'ematiques, UMR 7539, LAGA Institut Galil\'ee, Universit\'e Paris 13, FR - 93430  Villetaneuse}
}}
\maketitle
We prove  in some cases a formula for the Greenberg-Benois $\Ll$-invariant of the spin, standard and adjoint Galois representations associated with Siegel-Hilbert modular forms. In order to simplify the calculation, we give a new definition of the $\Ll$-invariant for a Galois representation $V$ of a number field $F\neq \Q$; we also check that it is compatible with Benois' definition for $\mathrm{Ind}_F^{\Q}(V)$.
\tableofcontents
\section{Introduction}

Since the historical results of Kummer and Kubota-Leopold on congruences for Bernoulli numbers, people have been interested in studying the $p$-adic variation of special values of $L$-functions.\\
More precisely, fix a motive $M$ over $\Q$. We suppose that $M$ is Deligne critical at $s=0$ and that there exists a Deligne's period $\Omega(M)$ such that $\frac{L(M,0)}{\Omega(M)}$ is algebraic. Fix a prime $p$ and two embeddings 
\begin{align*}
\C_p \hookleftarrow \Qb \hookrightarrow \C.
\end{align*}
Let $V$ be the $p$-adic realization of $M$ and suppose that $V$ is semistable (\`a la Fontaine). Thanks to work of Coates and Perrin-Riou, we have now days precise conjectures on how the special values should behave $p$-adically; we fix a  regular sub-module of $V$. This corresponds to the choice of a sub-$(\varphi,N)$-module of $\mathcal{D}_{\st}(V)$ which is a section of the exponential map 
\begin{align*}
\mathcal{D}_{\st}(V) \rightarrow t(V) \cong \frac{\mathcal{D}_{\st}(V)}{\mathrm{Fil}^0\mathcal{D}_{\st}(V)}.
\end{align*}
Let $h$ be the valuation of the determinant of $\varphi$ on $D$. We can state the following conjecture
\begin{conj}
There exists a formal series $L^D_p(V,T) \in \C_p[[T]]$ who grows as $\log_p^h$ such that for all non-trivial, finite-order characters $\eps: 1+p\Z_p \rightarrow \mu_{p^{\infty}}$ we have 
\begin{align*}
L^D_p(V,\eps(1+p)-1)= C_{\eps}(D)\frac{L(M,0)}{\Omega(M)}.
\end{align*}
Moreover, for $\eps=\mathbf{1}$ we have 
\begin{align*}
L_p(V,0)= E(D)\frac{L(M,0)}{\Omega(M)},
\end{align*}
where $E(D)$ is an explicit product of Euler-type factors depending on $D$ and ${(\mathcal{D}_{\st}(V)/D)}^{N=0}$.
\end{conj}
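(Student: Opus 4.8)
Since the statement is a (known) form of the Coates--Perrin-Riou--Panchishkin conjecture rather than something proved in the body of the paper, what follows is a strategy, namely the one by which such $p$-adic $L$-functions have actually been built in the cases relevant here. The plan is to produce $L_p^D(V,T)$ from an automorphic object rather than directly from $V$ and its $(\vphi,N)$-module. First I would fix an integral (period) representation of the critical value $L(M,0)$ --- a Rankin--Selberg convolution, a doubling integral, or a Shimura-type integral against an Eisenstein series --- whose input is the automorphic form $\pi$ underlying $M$ together with auxiliary data allowing one to twist by the finite-order characters $\eps$. Dividing by the Deligne period $\Omega(M)$, and checking via Shimura's or Harris's algebraicity results that the quotient lies in $\Qb$, turns each special value into an algebraic number, hence, through our fixed embeddings $\C_p\hookleftarrow\Qb\hookrightarrow\C$, into an element of $\C_p$.

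The second step is interpolation. I would place $\pi$ in a $p$-adic family (Hida theory if $D$ is ordinary, Coleman's if not) and reinterpret the collection $\{C_\eps(D)^{-1}L(M\otimes\eps,0)/\Omega(M)\}_\eps$ as the moments of a $p$-adic distribution $\mu^D$ on $1+p\Z_p$, setting $L_p^D(V,T)=\int(1+T)^x\,d\mu^D$. The growth condition ``$L_p^D$ grows like $\log_p^h$'' then becomes exactly the assertion that $\mu^D$ is $h$-admissible in the sense of Amice--V\'elu and Vishik, with $h=v_p(\det(\vphi\mid D))$; this is forced by the slope of the chosen submodule together with the standard bounds on the local zeta integral at $p$, and is proved by the usual Riemann-sum estimate on partial sums of the moments.

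The third step is the two explicit evaluations. The interpolation formula at non-trivial finite-order $\eps$ is the \emph{defining} property of $\mu^D$: unfolding the period integral against a ramified character, the local integral at $p$ contributes a Gauss sum and a volume, the archimedean integral contributes $\Gamma$-factors and powers of $\pi$, and all of this is absorbed into $C_\eps(D)$. For $\eps=\mathbf 1$ the ramified computation at $p$ degenerates into an unramified one: the Euler factor of $M$ at $p$, evaluated at $s=0$, reappears through the eigenvalues of $\vphi$ on $D$ and on ${(\mathcal D_{\st}(V)/D)}^{N=0}$ as a product of $(1-\alpha^{-1})$- and $(1-\beta)$-type terms, which is $E(D)$. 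Concretely I would pair the local integral at $p$ with the spherical or new vector, express it via the Satake parameters, and read off $E(D)$ as the ratio to the complex Euler factor.

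The main obstacle is the existence of a sufficiently flexible integral representation: one needs an Euler product whose $p$-local factor can be twisted by every $\eps$ and whose global period is a \emph{single} transcendental quantity $\Omega(M)$ independent of $\eps$, and for a general motive $M$ this is simply not available. In the Siegel--Hilbert setting of the paper it is known for the standard representation (doubling method) and the adjoint (Hida theory), and in low rank for the spin representation; but the semistable non-crystalline case $N\neq 0$ --- precisely the source of the trivial zeros motivating the $\Ll$-invariant --- forces one to work in a family and makes the $\eps=\mathbf 1$ computation of $E(D)$, and in particular the detection of its vanishing, the delicate point.
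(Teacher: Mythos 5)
You are right that this statement is not proved in the paper at all: it is stated as a Conjecture (of Coates--Perrin-Riou type) in the introduction, purely to set up the framework of trivial zeros and the Greenberg--Benois $\Ll$-invariant, and the paper never returns to it. So there is no ``paper proof'' to compare against, and your text should be read as a programme rather than a proof. As a programme it is accurate and matches how the known instances are actually established: critical values via an integral representation divided by a Deligne-type period, algebraicity \`a la Shimura/Harris, packaging of twisted values into an $h$-admissible distribution in the sense of Amice--V\'elu and Vishik with $h=v_p(\det(\varphi\mid D))$, and the local zeta integral at $p$ producing the Gauss-sum factor $C_{\eps}(D)$ for ramified twists and the Euler-type factor $E(D)$ (with modification along ${(\mathcal{D}_{\st}(V)/D)}^{N=0}$) at the trivial character.

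The gap, which you yourself name, is nonetheless real and decisive: for a general motive $M$ there is no integral representation with a $p$-local factor that can be twisted by every $\eps$ and a single period $\Omega(M)$ independent of $\eps$, and even in the Siegel--Hilbert setting of this paper the construction is not available for the spin $L$-function beyond low rank, nor in the semistable non-crystalline (Steinberg) situation that produces the trivial zeros the paper studies. The admissibility bound and the $\eps=\mathbf{1}$ computation of $E(D)$ are likewise only carried out case by case in the literature. So your proposal cannot be accepted as a proof of the statement; it is a correct identification of the statement as an open conjecture together with a faithful summary of the strategy used in the cases where it is known, which is exactly the status the paper itself assigns to it.
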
 
It may happen that one of the factor of $E(D)$ vanishes and then we say that trivial zeros appear. Since the seminal work of \cite{MTT}, people have been interested in describing the $p$-adic derivative of $L^D_p(V,{(1+p)}^s-1)$ when trivial zeros appear.\\
We suppose for simplicity that $L(M,0)$ is not vanishing. We have the following conjecture by Greenberg and Benois;
\begin{conj}
Let $t$ the number of vanishing factors of $E(D)$. Then 
\begin{itemize}
	\item $\mathrm{ord}_{s=0}L^D_p(V,{(1+p)}^s-1) = t$,
	\item $L_p(V,0)^* = \Ll(V,D)E^*(D)\frac{L(M,0)}{\Omega(M)}$.
\end{itemize}
Here $E^*(D)$ is the product of non-vanishing factors of $E(D)$ and $\Ll(V,D)$ is a number defined in purely Galois theoretical terms (see Section \ref{defLinv}). 
\end{conj}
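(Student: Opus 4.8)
The plan is to follow the deformation-theoretic philosophy of Greenberg--Stevens, cast in the general $(\varphi,\Gamma)$-module formalism of Benois: a trivial zero is invisible to $V$ in isolation and appears only when $V$ is put into a $p$-adic family, and the number $\Ll(V,D)$ records the first-order geometry of that family along the weight direction. Concretely, one would first construct the one-variable $p$-adic $L$-function $L_p^D(V,T)$ by interpolating the critical twisted $L$-values of the conjecture above, and then place the Siegel--Hilbert form in a Hida family (or, off the ordinary locus, a Coleman family) $\boldf$ over a neighbourhood of its weight $\kappa_0$ in weight space $\W$, equipped with a two-variable $p$-adic $L$-function $L_p(\boldf,\kappa,T)$ that specializes to $L_p^D(V,T)$ at $\kappa_0$. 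The crucial structural input is that each Euler-type factor in $E(D)$ is the value at $(\kappa_0,0)$ of a factor of the interpolation formula for $L_p(\boldf,\kappa,T)$ which vanishes along a hypersurface meeting the cyclotomic line $\{\kappa=\kappa_0\}$ transversally.

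For the first bullet, the $t$ vanishing factors of $E(D)$ then pull back to $t$ such hypersurfaces, so $L_p(\boldf,\kappa,T)$ is divisible by a product of $t$ nonzero linear forms whose restriction to $\{\kappa=\kappa_0\}$ vanishes to order $t$ at $T=0$; hence $\mathrm{ord}_{s=0}L_p^D(V,(1+p)^s-1)\geq t$. For the reverse inequality one produces an improved $p$-adic $L$-function $L_p^{*}(\boldf,\kappa)$, obtained by dividing out the $t$ exceptional factors, and shows it is holomorphic and nonzero at $\kappa_0$ --- this is where the hypothesis $L(M,0)\neq 0$ and the non-vanishing of the remaining factors $E^{*}(D)$ enter.

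For the second bullet one writes, after this factorization, $L_p(\boldf,\kappa,T)=\big(\prod_{i=1}^{t}((1+p)^{s}-1-\ell_i(\kappa-\kappa_0)+\cdots)\big)\,E^{*}(D)\,L_p^{*}(\boldf,\kappa)$, where each $\ell_i$ is the logarithmic derivative along $\boldf$ of the crystalline Frobenius eigenvalue (equivalently the $U_{\pfrak}$-eigenvalue) attached to the $i$-th exceptional sub-quotient of $\calD_{\st}(V)$ cut out by $D$. Differentiating along $\{\kappa=\kappa_0\}$ and using the chain rule gives
\begin{align*}
L_p^D(V,0)^{*}=\Big(\prod_{i=1}^{t}\ell_i\Big)\,E^{*}(D)\,\frac{L(M,0)}{\Omega(M)}.
\end{align*}
It then remains to identify $\prod_i\ell_i$ with the Galois-theoretic $\Ll(V,D)$ of Section~\ref{defLinv}. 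For this I would attach to $\boldf$ a big Galois representation $\mathbf{V}$ over the local ring of the eigenvariety at the form, with $\mathbf{V}\otimes\kappa_0\cong V$; restricting to the decomposition groups above $p$ and using its triangulation yields a deformation of $\mathbf{D}_{\mathrm{rig}}(V)$ whose derivative in the weight direction produces, for each trivial zero, an explicit class in the $H^1$ of the corresponding exceptional sub-quotient $(\varphi,\Gamma)$-module; one then checks that these $t$ classes form a basis adapted to Benois' two subspaces, with the $\ell_i$ appearing as the diagonal entries of the resulting (block-)triangular transition matrix, whose determinant is by definition $\Ll(V,D)$.

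The hard part is this last step: matching the infinitesimal deformation carried by the eigenvariety with the one used to \emph{define} $\Ll(V,D)$. It requires (i) sufficient control on the local geometry of the relevant Siegel--Hilbert eigenvariety at $\boldf$ --- ideally étaleness over $\W$, or at least unramifiedness of the weight map there --- (ii) a big Galois representation $\mathbf{V}$ with the correct Sen weights and a triangulation deforming that of $V$, and (iii), when $t>1$, the linear independence of the $t$ deformation classes together with the exact compatibility of the relevant pairing, so that the naive product of slopes really is the determinant in the definition of the $\Ll$-invariant. Since the fine geometry of the eigenvariety and the construction of the multi-variable $p$-adic $L$-function with the expected interpolation factors are presently known only under restrictive hypotheses, the conjecture can be established only ``in some cases''; granting these inputs, the explicit computation of $\Ll(V,D)$ for the spin, standard and adjoint representations is exactly what turns the leading-term formula into a closed expression.
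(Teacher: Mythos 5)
The statement you have tried to prove is not a theorem of the paper at all: it is the Greenberg--Benois trivial zero conjecture, stated in the introduction precisely as a \emph{conjecture}, and the paper never claims, nor attempts, a proof of it. The paper's actual results (Theorems \ref{teoStb} and \ref{teoAd}) only compute the error term $\Ll(V,D)$ that appears in the conjectural leading-term formula, for the spin, standard and adjoint representations of Siegel--Hilbert forms, under hypotheses such as \textbf{LGp}; they say nothing about the order of vanishing of $L_p^D(V,T)$ or about the leading term itself. So there is no ``paper's own proof'' for your argument to be measured against, and your proposal should not be presented as a proof of the statement.

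On its own terms, your outline is the standard Greenberg--Stevens/Benois strategy, and you correctly flag where it is incomplete, but the gaps are fatal rather than technical. The one-variable $p$-adic $L$-function $L_p^D(V,T)$ is itself only conjectural in this setting (it is the first conjecture of the paper); the multi-variable $p$-adic $L$-function over the Siegel--Hilbert eigenvariety with the factorized interpolation you invoke, the ``improved'' $p$-adic $L$-function obtained by removing the exceptional factors, and its nonvanishing at the centre are all unavailable here --- the paper explicitly describes such a construction (via overconvergent Maass--Shimura operators and projectors) only as a hoped-for generalization of the symmetric-square case. The one step of your plan that \emph{is} carried out in the paper is the last one: matching the derivatives of the Frobenius eigenvalues (the $U_{\pfrak}$-eigenvalues $\beta_{\pfrak}$, resp.\ the $F_{\pfrak,i}$) along the family with the Galois-theoretic $\Ll(V,D)$, via the triangulation of the big Galois representation and Benois' cohomological computation (Theorem \ref{LinvM}, Proposition \ref{PropImp}); but that identification, however useful, does not by itself yield either bullet of the conjecture. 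In short: what you have written is a plausible programme, consistent with how such results have been proved for $\mathrm{GL}_2$ and its symmetric powers, but it is not a proof, and the paper's contribution is confined to the $\Ll$-invariant computation inside that programme.
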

The error factor $\Ll(V,D)$ is quite mysterious. It has been calculated in only few cases  for the symmetric square of a (Hilbert) modular form by Hida, Mok and Benois and for symmetric power of Hilbert modular forms by Hida and Jorza-Harron. Unless $V$ is an elliptic curve over $\Q$ with multiplicative reduction at $p$ we can not prove the non-vanishing of $\Ll(V,D)$.\\
The aim of this paper is to calculated it in some new cases; let $F$ be a totally real field where $p$ is unramified and $\pi$ be an automorphic representation of ${\mathrm{GSp}_{2g}}_{/F}$. We suppose that it has Iwahoric level at all $\pfrak \mid p$. We suppose moreover that $\pi_{\pfrak}$ is either Steinberg (see Definition \ref{defstb}) or spherical. We partition consequently the prime ideals of $F$ above $p$ in $S^{\mathrm{Stb}}\cup S^{\mathrm{Sph}}$. \\
We have conjecturally two Galois representations associated to $\pi$, namely the spinorial one $V_{\mathrm{spin}}$ and the standard one $V_{\mathrm{sta}}$. Let $V$ be one of these two representations. We choose for each prime $\pfrak$ of $F$ dividing $p$ a  regular sub module $D_{\pfrak}$ of $\mathcal{D}_{\st}(V_{\vert_{G_{F_{\pfrak}}}})$.\\
Consider a family of Siegel-Hilbert modular forms as in \cite{UrbEig} passing through $\pi$. Let us denote by $\beta_{\pfrak}(\kappa)$ the eigenvalue of the normalized Hecke operators $U_{1,\pfrak}$ (see Definition \ref{Upi}) on this family. Let $S^{\mathrm{Sph},1}=S^{\mathrm{Sph},1}(V,D)$ be the subset of $S^{\mathrm{Sph}}$ for which ${(\mathcal{D}_{\st}(V_{\pfrak})/D_{\pfrak})}^{N=0}$ does not contain the eigenvalue $1$. Conjecturally, it is empty for the spin representation. The eigenvalues $1$ always appears in $\mathcal{D}_{\st}(V_{\pfrak})$ for $V$ the standard representation but it may appear in $D_{\pfrak}$ (this is already the case for the symmetric square of a modular form). \\
Let $t_{\mathrm{Stb}}$ be the cardinality of $S^{\mathrm{Stb}}$ and  $t_{\mathrm{Sph}}$ be the cardinality of $S^{\mathrm{Sph},1}$. We define $f_{\pfrak}=[F^{\mathrm{ur}}_{\pfrak}:\Q_p]$.
\begin{theo}\label{teoStb}
Let $\pi$ be as above, of parallel weight $\underline{k}$. Let $V=V_{\mathrm{spin}}$ and suppose hypothesis {\bfseries LGp} of Section \ref{EffCal}, then the expected number of trivial zero for $L^D_p(V(k-1),T)$ is $t_{\mathrm{Stb}}$ and \begin{align*}
\Ll(V(k-1),D)= \prod_{\pfrak \in S^{\mathrm{Stb}}} -\frac{1}{f_{\pfrak}}{\frac{\textup{d}\log_p\beta_{\pfrak}(k)}{\textup{d}k}}_{\vert_{k=\underline{k}}}.
\end{align*}
Let $V=V_{\mathrm{std}}$, then the conjectural number of trivial zero for $L^D_p(V,T)$ is $t_{\mathrm{Stb}}+t_{\mathrm{Sph}}$ and \begin{align*}
\Ll(V,D)=\Ll(V,D)^{\mathrm{Sph}} \prod_{\pfrak \in S^{\mathrm{Stb}}} -\frac{1}{f_{\pfrak}}{\frac{\textup{d}\log_p \beta_{\pfrak}(k)}{\textup{d}k}}_{\vert_{k=\underline{k}}},
\end{align*}
where $\Ll(V,D)^{\mathrm{Sph}}$ is {\it a priori} global factor. It is $1$ if $t_{\mathrm{Sph}}=0$.
\end{theo}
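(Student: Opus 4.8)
The plan is to reduce the computation to purely local ones at the primes $\pfrak \mid p$ by exploiting the new definition of the $\Ll$-invariant over $F$ given in Section \ref{defLinv}, together with its compatibility with Benois' definition for $\mathrm{Ind}_F^{\Q}(V)$. Concretely, the Selmer complex computing the derivative of $L^D_p$ and the $\Ll$-invariant should decompose, up to the genuinely global spherical contribution $\Ll(V,D)^{\mathrm{Sph}}$, as a product indexed by the primes carrying a vanishing Euler factor, namely $S^{\mathrm{Stb}}$ for $V=V_{\spin}(k-1)$ and $S^{\mathrm{Stb}}\cup S^{\mathrm{Sph},1}$ for $V=V_{\mathrm{std}}$; each such prime then contributes exactly $1$ to $\mathrm{ord}_{s=0}$ and a single local factor to the $\Ll$-invariant. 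Hypothesis {\bfseries LGp} is what makes $V|_{G_{F_\pfrak}}$ semistable with the expected Hodge--Tate weights, so that at a Steinberg prime $\pfrak$ the relevant rank-one subquotient of $\calD_{\st}$ is of the exceptional (non-crystalline, ``Steinberg'') type and Benois' explicit formula for the $\Ll$-invariant in the split case applies; at a prime of $S^{\mathrm{Sph},1}$ the trivial zero is instead of crystalline type and its effect is absorbed into $\Ll(V,D)^{\mathrm{Sph}}$.

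Next I would produce the deformation governing each local factor. From the family of Siegel--Hilbert modular forms of \cite{UrbEig} through $\pi$ and, once more, {\bfseries LGp}, one obtains a big Galois representation over the weight space whose restriction to each $G_{F_\pfrak}$ is trianguline, the phase of the triangulation at $\pfrak$ being read off from the normalized eigenvalue $\beta_\pfrak(\kappa)$ of $U_{1,\pfrak}$. Differentiating the triangulation --- equivalently the crystalline Frobenius eigenvalue --- in the weight direction and inserting the outcome into the Bloch--Kato exponential/logarithm description of Benois' $\Ll$-invariant in the split case (the Greenberg--Stevens/Hida/Mok/Benois type argument) yields that the local $\Ll$-invariant at $\pfrak$ is $-\tfrac{1}{f_\pfrak}\,\tfrac{\mathrm{d}\log_p\beta_\pfrak(k)}{\mathrm{d}k}\big|_{k=\underline k}$. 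The constant $f_\pfrak^{-1}=[F^{\mathrm{ur}}_\pfrak:\Q_p]^{-1}$ enters because the single cyclotomic deformation direction seen by $\mathrm{Ind}_F^{\Q}$ is shared among the $f_\pfrak$ embeddings of $F_\pfrak$ into $\C_p$; keeping track of this normalization, and of the Tate twist by $k-1$ in the spin case, accounts for most of the bookkeeping.

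Assembling the local inputs gives both halves of the statement: for $V_{\spin}$ only $S^{\mathrm{Stb}}$ contributes, so the expected order of vanishing is $t_{\mathrm{Stb}}$ and $\Ll(V_{\spin}(k-1),D)$ equals the displayed product; for $V_{\mathrm{std}}$ the order is $t_{\mathrm{Stb}}+t_{\mathrm{Sph}}$ and $\Ll(V_{\mathrm{std}},D)=\Ll(V,D)^{\mathrm{Sph}}\prod_{\pfrak\in S^{\mathrm{Stb}}}-\tfrac{1}{f_\pfrak}\tfrac{\mathrm{d}\log_p\beta_\pfrak(k)}{\mathrm{d}k}\big|_{k=\underline k}$, with $\Ll(V,D)^{\mathrm{Sph}}=1$ when $t_{\mathrm{Sph}}=0$. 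The main obstacle is the second step: one must know that the big Galois representation of \cite{UrbEig} really is trianguline at each $\pfrak\mid p$, with the triangulation varying analytically and with Sen weights interpolating the Hodge--Tate weights --- precisely the content of {\bfseries LGp} --- and one must then match Benois' cohomological $\Ll$-invariant, defined via Selmer complexes and $(\varphi,\Gamma)$-modules, with the derivative of the Frobenius eigenvalue read off the family, including every normalization constant. Once this comparison is secured the Steinberg part is essentially formal; the global factor $\Ll(V,D)^{\mathrm{Sph}}$ for the standard representation is, by design, left unevaluated here.
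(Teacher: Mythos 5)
Your global skeleton is essentially the paper's: first a factorization $\Ll(V,D)=\Ll(V,D)^{\mathrm{Sph}}\prod_{\pfrak\in S^{\mathrm{Stb}}}\Ll(V,D)_{\pfrak}$ with the spherical part left as a genuinely global factor (the paper proves this by Hida's linear-algebra argument, using Proposition \ref{Mstruct} and the fact that each Steinberg prime carries exactly one trivial zero), then a local computation at each $\pfrak\in S^{\mathrm{Stb}}$ in which the infinitesimal deformation is supplied by the eigenvariety family through $\pi$, with \textbf{LGp} used to identify the Frobenius data of $V_{\vert F_{\pfrak}}$ with the normalized $U_{1,\pfrak}$-eigenvalue $\beta_{\pfrak}$, and the trivial-zero count ($t_{\mathrm{Stb}}$ for the spin twist, $t_{\mathrm{Stb}}+t_{\mathrm{Sph}}$ for the standard representation) read off from the Frobenius eigenvalues. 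Up to that point the two arguments agree.

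The genuine gap is in your key local step: you assert that ``Benois' explicit formula for the $\Ll$-invariant in the split case applies'' at a Steinberg prime and that what remains is a matching of normalizations. Benois' formula is proved for representations of $G_{\Q_p}$, whereas here $F_{\pfrak}$ is an unramified extension of degree $f_{\pfrak}$, possibly $>1$: for the relevant rank-one pieces one has $\dim_E H^1(\rr_{F_{\pfrak}}(\delta))=[F_{\pfrak}:\Q_p]+1$, so there is no canonical two-dimensional space with an $\mathrm{ord}/\log$ basis and the formula does not apply verbatim; nor does passing to $\mathrm{Ind}_F^{\Q}(V)$ reduce to a sum of $\Q_p$-lines at $p$. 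The actual content of the paper's proof is the material of Sections \ref{reminder}--\ref{SecLinv} and Theorem \ref{LinvM}: one cuts out the right two-dimensional subspace as the image of $H^1(\rr_{\Q_p}(z^{\sum_\tau m_\tau}))$ inside $H^1(\rr_{F_{\pfrak}}(\delta))$ (the cocycles $x_m,y_m$ of Proposition \ref{basef}), checks compatibility with Benois' definition for the induced representation, and then reruns Benois' $C_{\varphi,\gamma}$-complex computation in this relative setting; only after that does $-\frac{1}{f_{\pfrak}}\,\mathrm{d}\log_p\beta_{\pfrak}$ come out, the $1/f_{\pfrak}$ arising because $\beta_{\pfrak}$ is a $\varphi^{f_{\pfrak}}$-eigenvalue (the paper applies Theorem \ref{LinvM} to the $f_{\pfrak}$-th root of $\chi_{\pfrak,1}(\varpi_{\pfrak})$), not from your heuristic of the cyclotomic direction being ``shared among the $f_{\pfrak}$ embeddings''. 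Two smaller inaccuracies: the trianguline-ness of the family with analytically varying triangulation is not ``precisely the content of \textbf{LGp}'' --- it is Liu's theorem (Theorem \ref{TeoLiu}) applied via the density of crystalline classical points, \textbf{LGp} being needed only to control the local structure at $\pi$ itself; and for $V_{\mathrm{std}}$ the decomposition is not a product over $S^{\mathrm{Stb}}\cup S^{\mathrm{Sph},1}$ of local factors --- the paper's factorization is only local at Steinberg primes, the $S^{\mathrm{Sph},1}$ contribution remaining an a priori global block, which is exactly why $\Ll(V,D)^{\mathrm{Sph}}$ is left unevaluated.
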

In Section \ref{EffCal} we shall provide also  a formula for the $\Ll$-invariant of $V_{\mathrm{std}}(s)$ ($\mathrm{min}(k-g-1,g-1) \geq s \geq 1$).\\ 

The proof of the theorem is not different from the one of \cite[Theorem 2]{BenLinv2} which in turn is similar to the original one of \cite{SSS}.\\
Let now $g=2$.  Let $t$ be the number of primes above $p$ in $F$. We consider the $2t$-dimensional eigenvariety for ${\mathrm{GSp}_4}_{/F}$ with variables $k=\lgr k_{\pfrak,1},k_{\pfrak,2}\rgr_{\pfrak}$(see Section \ref{Ad}) and let us denote by $F_{\pfrak,i}(k)$ ($i=1,2$) the first two graded pieces of $\boldD^{\dagger}_{\mathrm{rig}}(V_{\spin})$. The $10$-dimensional Galois representation $\mathrm{Ad}(V_{\spin})$ has a natural regular sub-$(\varphi,N)$-module induced by the $p$-refinement of $\boldD^{\dagger}_{\mathrm{rig}}(V_{\spin})$ and which we shall denote by $D_{\Ad}$. With this choice of regular sub module, $\mathrm{Ad}(V_{\spin})$ presents $2t$ trivial zeros. In Section \ref{Ad} we prove the following theorem;
\begin{theo}\label{teoAd}
Let $\pi$ be an automorphic form of weight $\underline{k}$ and suppose hypothesis {\bfseries LGp} of Section \ref{EffCal} is verified for $V_{\mathrm{spin}}$, we have then 
\begin{align*}
\Ll(\mathrm{Ad}(V_{\spin}(\pi)),D_{\Ad}) =\prod_{\pfrak} \frac{2}{ f^2_{\pfrak}} {\mathrm{det}{\left(\begin{array}{cc}
\frac{\partial \log_p F_{\pfrak_i,1}(k)}{\partial k_{\pfrak_j,1}} & \frac{\partial \log_p F_{\pfrak_i,2}(k)}{\partial k_{\pfrak_j,1}} \\
\frac{\partial \log_p F_{\pfrak_i,1}(k)}{\partial k_{\pfrak_j,2}} & \frac{\partial \log_p F_{\pfrak_i,2}(k)}{\partial k_{\pfrak_j,2}}
\end{array}  \right)}_{1 \leq i,j \leq t}}_{\vert_{k=\underline{k}}}.
\end{align*}
\end{theo}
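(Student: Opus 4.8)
The plan is to reduce the computation of $\Ll(\mathrm{Ad}(V_{\spin}),D_{\Ad})$ to the local triangulation data of $\boldD^{\dagger}_{\mathrm{rig}}(V_{\spin})$ over the $2t$-dimensional eigenvariety, exactly as in the proof of Theorem \ref{teoStb}, and then to unwind the adjoint representation into its weight-space pieces. First I would observe that, since $p$ is unramified in $F$ and $V = \mathrm{Ad}(V_{\spin})$, the new definition of the $\Ll$-invariant for a Galois representation of $F$ (Section \ref{defLinv}) lets us work prime-by-prime: $\Ll(V,D_{\Ad})$ factors as a product over $\pfrak \mid p$ of local $\Ll$-invariants, each of which, after the compatibility check with $\mathrm{Ind}_F^{\Q}$, is computed from the $(\varphi,N)$-module $\mathcal{D}_{\st}(V_{\vert G_{F_\pfrak}})$ together with the regular submodule $D_{\Ad,\pfrak}$. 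So it suffices to compute one local factor at each $\pfrak$, and the factor $f_\pfrak = [F^{\mathrm{ur}}_\pfrak : \Q_p]$ will enter through the restriction-of-scalars / base-change bookkeeping in exactly the same way as the $-1/f_\pfrak$ in Theorem \ref{teoStb}.

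Next I would set up the local picture at a fixed $\pfrak$. The spin representation is $4$-dimensional, with a $p$-refinement giving an ordered filtration of $\boldD^{\dagger}_{\mathrm{rig}}(V_{\spin})$ whose first two graded pieces are the rank-one $(\varphi,\Gamma)$-modules attached to $F_{\pfrak,1}(k)$ and $F_{\pfrak,2}(k)$. Taking $\mathrm{Ad}$, the $10$-dimensional module $\mathrm{Ad}(V_{\spin})$ acquires a triangulation indexed by differences of the four Hodge–Tate–Sen parameters; the $2t$ trivial zeros correspond to the two graded pieces at each $\pfrak$ whose attached character is trivial at the special point (these are the pieces governed by $F_{\pfrak,1}/F_{\pfrak,2}$ and a second, related ratio, coming from the $\mathrm{GSp}_4$ root structure — this is the step to be careful with). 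Following \cite{BenLinv2} and \cite{SSS}, the Greenberg–Benois $\Ll$-invariant for a representation with a regular submodule producing several trivial zeros is a determinant: one forms the matrix whose entries measure the failure of the local conditions to be locally constant along the eigenvariety, i.e. the logarithmic derivatives of the triangulation parameters with respect to the weight variables $k_{\pfrak_j,1}, k_{\pfrak_j,2}$. The key input is the explicit description, from the family of Siegel–Hilbert forms of \cite{UrbEig} and hypothesis \textbf{LGp}, of how the crystalline/triangulation parameters $F_{\pfrak,i}(k)$ vary: their $p$-adic logarithmic derivatives in the $k_{\pfrak_j,i'}$ directions are precisely the entries appearing in the claimed $2\times 2$ block, and the cross-terms $i \neq j$ vanish at the special point because the local condition at $\pfrak_i$ is insensitive to the weight at a different place $\pfrak_j$ — which is why the big $t\times t$ determinant collapses to a product of $2\times 2$ determinants, one per $\pfrak$.

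I would then assemble the global $\Ll$-invariant. By the formalism of Section \ref{defLinv} (and its compatibility with Benois' definition under $\mathrm{Ind}_F^{\Q}$), the global $\Ll(\mathrm{Ad}(V_{\spin}),D_{\Ad})$ is the product over $\pfrak$ of the local $2\times 2$ determinants just computed; the normalization constant $\tfrac{2}{f^2_\pfrak}$ arises as $\bigl(\tfrac{1}{f_\pfrak}\bigr)^2$ from the two weight directions at $\pfrak$ (each contributing a $-1/f_\pfrak$ as in Theorem \ref{teoStb}, with the two signs cancelling) together with a combinatorial factor of $2$ from the pair of adjoint graded pieces at $\pfrak$ being interchanged by the relevant Weyl element, so that the naive determinant double-counts by a factor $2^{?}$ that one tracks explicitly. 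The main obstacle I expect is precisely this last bookkeeping: correctly identifying which two of the $\binom{4}{2}+$ (diagonal) $= 10$ graded pieces of $\mathrm{Ad}(V_{\spin})$ become trivial at the special point, checking that the regular submodule $D_{\Ad}$ induced by the $p$-refinement picks out the right half of each such pair, and pinning down the exact rational constant $\tfrac{2}{f^2_\pfrak}$ (sign and power of $2$) rather than just the shape of the determinant. The analytic/Galois-cohomological heart — that the $\Ll$-invariant equals this determinant of logarithmic derivatives — is formally the same as in \cite[Theorem 2]{BenLinv2}, so the genuinely new work is this representation-theoretic identification of the $\mathrm{Ad}$-pieces in terms of the $F_{\pfrak,i}(k)$ and the constant-chasing.
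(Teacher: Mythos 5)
There is a genuine gap: your proposal treats the adjoint $\Ll$-invariant as a purely local quantity, computable prime-by-prime from the variation of the triangulation parameters alone, "formally the same as \cite[Theorem 2]{BenLinv2}". But that local formalism (Theorem \ref{LinvM} in the paper) only applies when $W_{\pfrak}=M_{\pfrak}$, i.e.\ when $T_{\pfrak}$ depends only on $V_{\vert_{F_{\pfrak}}}$; for $\Ad(V_{\spin})$ with $D_{\Ad}$ the trivial zeros come from the two-dimensional piece ${D_{\Ad}}_0/{D_{\Ad}}_{-1}$ spanned by $d_{\pfrak,1},d_{\pfrak,2}$ (eigenvalue $1$, Hodge--Tate weight $0$), which is of $W_{\pfrak,1}$-type, and there the $\Ll$-invariant is defined through the \emph{global} subspace $H^1(D,V)\subset H^1(G_{F,S},V)$. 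The heart of the paper's proof, which your plan omits entirely, is Proposition \ref{PropImp}: one must actually produce $2t$ independent global cocycles, and this is done by the Mazur--Tilouine device of deforming the big representation $\rho_{\spin}$ over the eigenvariety $\X'$ in each weight direction and forming $\sigma\mapsto \rho'(\sigma)V^{-1}(\sigma)$, which lands in $\Ad(V)$ because the similitude/determinant is fixed; the hypothesis that the classical point is \'etale over the weight space is what guarantees these directions span. Local knowledge of how $F_{\pfrak,i}(k)$ varies does not by itself produce classes in $H^1(G_{F,S},\Ad(V))$, so your "reduction to the local triangulation data" skips the step that makes the theorem true.

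Two further points of substance. First, your claim that the cross-terms $\partial \log_p F_{\pfrak_i,a}/\partial k_{\pfrak_j,b}$ with $i\neq j$ vanish at the special point, so that the determinant collapses into per-prime $2\times 2$ blocks, is neither justified nor what the theorem asserts: the stated formula keeps the full $2t\times 2t$ determinant of $\iota_{\mathrm{f}}$, cross-prime derivatives included (there is no reason an eigenvalue function on the eigenvariety is insensitive to the weight at another place); only the matrix of $\iota_{\mathrm{c}}$ is block diagonal, because it is computed from the explicit weight characters via $\frac{\textup{d}\log(u^{\pm k_{\pfrak,i}})}{\textup{d} k_{\pfrak',j}}=\pm \delta_{\pfrak,\pfrak'}\delta_{i,j}$. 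Second, the constant is not a "Weyl-element double-counting" or sign-cancellation effect: in the paper $\Ll=\mathrm{det}(\iota_{\mathrm{f}}\circ\iota_{\mathrm{c}}^{-1})$, and the factor $2/f_{\pfrak}^2$ per prime is exactly $\mathrm{det}(\iota_{\mathrm{c}})^{-1}$ on the block at $\pfrak$, the $f_{\pfrak}$'s entering through the norm $\mathrm{N}_{F_{\pfrak}/\Q_p}$ in the characters $\delta_{\pfrak,i}(u)$. Identifying the two relevant graded pieces of $\Ad(V_{\spin})$, which you flag as the main difficulty, is in fact straightforward once $D_{\Ad}$ is written down; the real work you are missing is the construction of the global cocycles and the \'etaleness input.
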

We remark that this theorem is the first to really go beyond $\mathrm{GL}_2$ and its representations $\mathrm{Sym}^n$.\\
%We thank A. Jorza for pointing out to us that the proof of such a theorem should have been possible.\\

The motivation for Theorem \ref{teoStb} lies in a generalization of \cite{RosPB} to Siegel forms. In {\it loc. cit.} we use Greenberg-Stevens method to prove a formula for the derivative of the symmetric square $p$-adic $L$-function and calculate the analytic $\Ll$-invariant and the same method of proof can be generalized to finite slope Siegel forms thanks to the overconvergent Ma\ss{}-Shimura operators and overconvergent projectors of Z.~Liu's thesis.\\ 
With some work, it could also be generalized to totally real field where $p$ where is inert, as already done for the symmetric square \cite{RosH}.\\ 

We hope to calculate the $\Ll$-invariant for $V_{\mathrm{std}}$ and $\Ad(V_{\spin})$ for more general forms in a future work.\\

In Section \ref{reminder} we recall the theory of $(\varphi,\G)$-module over a finite extension of $\Q_p$. It will be used in Section \ref{SecLinv} to generalize the definition of the $\Ll$-invariant \`a la Greenberg-Benois to Galois representations $V$ over general number field $F$ (note that we do not suppose $p$ split or unramified). This definition does not require one to pass through $\mathrm{Ind}_F^{\Q}(V)$ to calculate the $\Ll$-invariant which in turn simplifies explicit calculation. We shall check that this definition coincides with Benois' definition for $\mathrm{Ind}_F^{\Q}(V)$.\\ 
We prove the above-mentioned theorems in Section \ref{SieHib} and \ref{Ad}, inspired mainly by the methods of \cite{HTate2}.\\

%We think that to treat the case of multiple or cristalline trivial zeros with the Greenberg-Stevens method new ideas are required.\\

%We hope to deal with the $\Ll$-invariant of $V=\mathrm{Ad}(V_{\mathrm{spin}})$ in the cases where $V$ is crystalline in a subsequent paper (at least for genus $2$). %and $V=V_{\mathrm{std}}$

\paragraph{Acknowledgement} This paper is part of the author's PhD thesis and we would like to thank J. Tilouine for his constant guidance and attention. We would like to thank A. Jorza for telling us that the study of the $\Ll$-invariant in the Steinberg case was within reach. We would also like to thank D.~Hansen, \'E.~Urban and S.~Shah for useful conversations. 
%Partly of this paper has been written during a visit at Columbia University which the author would like to thank for the excellent working condition.

%%%%%%%%%%%%%%%%%%%%%%%%%%%%

\section{\texorpdfstring{Some results on rank one $(\varphi,\G)$-module}{Some results on rank one (phi,Gamma)-module}}\label{reminder}
Let $L$ be a finite extension of $\Q_p$. The aim of this section is to recall certain results concerning $(\varphi,\G)$-modules over the Robba ring $\rr_L$. Let $L_0$ be the maximal unramified extension contained in $L$. Let $L_0'$ be the maximal unramified extension contained in $L_{\infty}:=L(\mu_{p^{\infty}})$ and $L'=L \cdot L_0'$.  Let $e_L:=[L(\mu_{p^{\infty}}):L_0(\mu_{p^{\infty}})]=[\G_{\Q_p}:\G_{L}]$, where $\G_{L}:=\mathrm{Gal}(L_{\infty}/L)$. We define 
\begin{align*}
\boldB_{L,\mathrm{rig}}^{\dagger,r}= & \lgr f=\sum_{n \in \Z} a_n \pi_L^n \vert a_n \in L_0', \mbox{ such that } f(X)=\sum_{n \in \Z} a_n X^n \right.\\
& \left. \;\:\: \mbox{is holomorphic on } p^{-\frac{1}{e_Lr}} \leq |X|_p <1 \rgr, \\
\rr_L = & \bigcup_{r}\boldB_{K,\mathrm{rig}}^{\dagger,r},
\end{align*}
where $\pi_L$ is a certain uniformizer coming from the theory of field of norm.
We have an action of $\varphi$ on $\rr_L$. If $L=L_0$, there is no ambiguity and we have:
\begin{align*}
\varphi(\pi_L)={(1+\pi_L)}^p-1, \;\;\;\; \varphi(a_n)=\varphi_{L_0'}(a_n).
\end{align*}
Otherwise the action on $\pi_L$ is more complicated.\\
Similarly, we have a $\G_L$-action. If $L=L_0$ we have
\begin{align*}
\gamma(\pi_L)={(1+\pi_L)}^{\cycar(\gamma)}-1,
\end{align*}
where $\cycar$ is the cyclotomic character. If $L$ is ramified we also have an action of $\G_L$ on the coefficients given by
\begin{align*}
\gamma(a_n)=\sigma_{\gamma}(a_n)
\end{align*}
where $\sigma_{\gamma}$ is the image of $\gamma$ via 
\begin{align*}
\G_L \rightarrow \G_L/\G_{L'} \stackrel{\cong}{\rightarrow} \mathrm{Gal}(L_0'/L_0).
\end{align*}
If $a_n$ is fixed by $\varphi$ and $\G_L$, then is it in $\Q_p$. We have  $\mathrm{rk}_{\rr_{\Q_p}}\rr_L = [L_{\infty}:\Q_{p,\infty}]$.\\
 Let $\delta:L^{\times} \rightarrow E^{\times}$ be a continuous character. We define $\rr_L (\delta)$ to be the rank one $(\varphi,\G_L)$-module with basis $e_{\delta}$ for which $\varphi(e_{\delta})= \delta(\pi_L)e_{\delta}$ and $\gamma (e_{\delta})=\delta(\cycar(\gamma))e_{\delta}$.\\ 

We classify now the cohomology of such a $(\varphi,\G_L)$-modules. It will be useful to calculate it explicitly in terms of $C_{\varphi,\gamma}$-complexes \cite[\S 1.1.5]{BenLinv}. We fix then a generator $\gamma_L$ of $\G_L$; if clear from the context, we shall drop the subscript $\phantom{e}_L$ and write simply $\gamma$.
 %We begin by giving the classification of $H^0$
\begin{prop}\label{dimH^i}
We have $H^{0}(\rr_L (\delta))=0$ unless $\delta(z)=\prod_{\tau}\tau(z)^{m_\tau}$ with $m_{\tau} \leq 0$ for all $\tau$; in this case we have $H^{0}(\rr_L \delta) \cong E$. We shall denote its basis by $t^m \otimes e_{\delta}$, where 
\begin{align*}
t^m =\oplus t^{m_{\tau}} \in \oplus_{\tau} B_{\mathrm{dR}}^+ \otimes_{L,\sigma} E.
\end{align*}
If $\delta(z)=\prod_{\tau}\tau(z)^{m_\tau}$ with $m_{\tau} \leq 0$, then $$\mathrm{dim}_E H^{1}(\rr_L (\delta)) = [L:\Q_p] +1.$$
If $\delta(z)=|\mathrm{N}_{L/\Q_p}(z)|_p\prod_{\tau}\tau(z)^{k_\tau}$ with $k_{\tau} \geq 1$, then $$\mathrm{dim}_E H^{1}(\rr_L (\delta)) = [L:\Q_p] +1.$$
Otherwise $$\mathrm{dim}_E H^{1}(\rr_L (\delta)) = [L:\Q_p].$$
We have $H^{2}(\rr_L (\delta))=0$ unless $\delta(z)=|\mathrm{N}_{L/\Q_p}(z)|_p\prod_{\tau}\tau(z)^{k_\tau}$ with $k_{\tau} \geq 1$; in this case we have $H^{2}(\rr_L (\delta)) \cong E$. 
\end{prop}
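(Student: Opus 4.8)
The plan is to compute the cohomology of $\rr_L(\delta)$ via the Herr-style $C_{\varphi,\gamma}$-complex, following the strategy of \cite[\S 1.1.5]{BenLinv} adapted to the base field $L$. Write $\Delta$ for the complex $\rr_L(\delta) \xrightarrow{(\varphi-1,\gamma-1)} \rr_L(\delta)\oplus\rr_L(\delta) \xrightarrow{(\gamma-1)\oplus(1-\varphi)} \rr_L(\delta)$, so that $H^i(\rr_L(\delta))$ is the $i$-th cohomology of $\Delta$. First I would recall the known structure of $\rr_L$ as a $(\varphi,\gamma)$-module, in particular the description of $\rr_L[1/t]/\rr_L$ and the fact that $\varphi-1$ and $\gamma-1$ act on $\rr_L$ with the usual finite-dimensional kernels and cokernels; the key input is the analysis of when $t^m\otimes e_\delta$ (or a multiple thereof) lies in the kernel of $\varphi-1$ and $\gamma-1$ simultaneously.

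The computation of $H^0$ is direct: an element $f e_\delta$ is $\varphi$- and $\gamma$-fixed iff $\varphi(f)=\delta(\pi_L)^{-1}f$ and $\gamma(f)=\delta(\cycar(\gamma))^{-1}f$. Decomposing along the embeddings $\tau\colon L\hookrightarrow E$ and using that the only elements of $\rr_L$ on which $\varphi$ and $\gamma$ act through algebraic characters are the powers $t^m$, one finds $H^0\neq 0$ precisely when $\delta(z)=\prod_\tau \tau(z)^{m_\tau}$ with all $m_\tau\le 0$, and then it is one-dimensional over $E$ generated by $t^m\otimes e_\delta$. For $H^2$ I would invoke the duality $H^2(\rr_L(\delta))\cong H^0(\rr_L(\delta^{-1}\chi))^\vee$, where $\chi(z)=|\NN_{L/\Q_p}(z)|_p\prod_\tau\tau(z)$ is the character giving $\rr_L(\chi)\cong \rr_L\otimes(\text{the dualizing object})$ — equivalently, $\rr_L(\chi)$ is $\rr_L$ twisted by the cyclotomic character composed with the norm and the product-of-embeddings character. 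Unwinding, $H^2\neq 0$ exactly when $\delta^{-1}\chi$ is of the form $\prod\tau(z)^{m_\tau}$ with $m_\tau\le 0$, i.e. $\delta(z)=|\NN_{L/\Q_p}(z)|_p\prod_\tau\tau(z)^{k_\tau}$ with $k_\tau\ge 1$, and it is one-dimensional.

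Finally, $H^1$ is forced by the Euler characteristic formula. The general Euler–Poincaré formula for $(\varphi,\gamma)$-modules over $\rr_L$ gives $\dim_E H^0 - \dim_E H^1 + \dim_E H^2 = -[L:\Q_p]\cdot\mathrm{rk}\,\rr_L(\delta) = -[L:\Q_p]$. Substituting the values of $\dim H^0$ and $\dim H^2$ just computed yields $\dim_E H^1 = [L:\Q_p]$ in the generic case, and $[L:\Q_p]+1$ precisely in the two exceptional cases where $H^0$ or $H^2$ is nonzero (these two cases are disjoint: $\prod\tau(z)^{m_\tau}$ with $m_\tau\le 0$ can never equal $|\NN_{L/\Q_p}(z)|_p\prod\tau(z)^{k_\tau}$ with $k_\tau\ge 1$, since the former is a unit-norm character while the latter has a nontrivial norm component). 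I expect the main obstacle to be bookkeeping rather than conceptual: one must be careful when $L$ is ramified, since then $\varphi$ and $\gamma$ act nontrivially on the coefficient field $L_0'$ and on the uniformizer $\pi_L$, so the reduction to the split case requires tracking the $\mathrm{Gal}(L_0'/L_0)$-action throughout; and one must verify the Euler characteristic formula in the form stated over $\rr_L$ (rather than $\rr_{\Q_p}$), which follows from the $\Q_p$-case applied to the restriction of scalars $\rr_L(\delta)$ viewed as a $(\varphi,\G_{\Q_p})$-module of rank $[L:\Q_p]$, or directly from Liu's Euler characteristic theorem.
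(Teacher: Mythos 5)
Your proposal is correct, and it is essentially the argument the paper relies on: the paper's own ``proof'' simply cites Nakamura's rank-one computation for $B$-pairs (Propositions 2.14, 2.15) together with Liu's cup-product duality to get the $H^2$ statement, and your sketch --- explicit computation of $H^0$, duality $H^2(\rr_L(\delta))\cong H^0(\rr_L(\delta^{-1}\chi))^{\vee}$, and the Euler--Poincar\'e formula $\dim H^0-\dim H^1+\dim H^2=-[L:\Q_p]$ to pin down $H^1$ --- is exactly the standard argument underlying those citations. No genuine gap, provided one grants the key input you flag (the classification of elements of $\rr_L$ on which $\varphi$ and $\gamma$ act through algebraic characters), which is the content of the cited Colmez--Nakamura computation.
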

Note that when we choose $t^m$ as a basis we are implicitly using the fact that we can embed certain sub-rings of $\rr_L$ into $B_{\mathrm{dR}}^+$ (see \cite[\S 1.2.1]{BenLinv}).
\begin{proof}
The same results is stated in \cite[Proposition 2.14, 2.15]{NakPhi} for $E-B$-pairs, but the proof for $(\varphi,\G)$-modules is the same.\\  Recall that have a canonical duality \cite{LiuCD} given by cup product \begin{align*}
H^i(D)\times H^{2-i}(D^*(\cycar)) \rightarrow H^2(\cycar).
\end{align*}
The last fact is then a direct consequence. 
\end{proof}
This allows us to define a canonical basis of $H^2(\rr_L (|\mathrm{N}_{L/\Q_p}(z)|_p\prod_{\tau}\tau(z)^{k_\tau}))$. \\%It is probably not the basis $\omega_{k}$ of \cite{BenLinv} nor \cite{LiuCD}.\\ 
We define $H^1_{\mathrm{f}}(D)$ has  the $H^1$ of the complex 
\begin{align*}
\mathcal{D}_{\cris}(D) \rightarrow t_D \oplus \mathcal{D}_{\cris}(D)
\end{align*}
and we have immediately \cite[Proposition 2.7]{NakPhi}
\begin{align}\label{dimH1f}
\mathrm{dim}_{E}H^1_{\mathrm{f}}(D) = \mathrm{dim}_{E}(H^0(D)) + \mathrm{dim}_{E}t_D.
\end{align}
Hence 
\begin{lemma}
If $\delta(z)=\prod_{\tau}\tau(z)^{m_\tau}$ with $m_{\tau} \leq 0$, then $$\mathrm{dim}_E H^{1}_{\mathrm{f}}(\rr_L (\delta)) = 1.$$
If $\delta(z)=|\mathrm{N}_{L/\Q_p}(z)|_p\prod_{\tau}\tau(z)^{k_\tau}$ with $k_{\tau} \geq 1$, then $$\mathrm{dim}_E H^{1}_{\mathrm{f}}(\rr_L (\delta)) = d.$$
\end{lemma}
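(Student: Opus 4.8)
The statement follows at once from the dimension formula \eqref{dimH1f}, namely $\dim_E H^1_{\mathrm{f}}(D)=\dim_E H^0(D)+\dim_E t_D$, applied to $D=\rr_L(\delta)$. The first summand is provided directly by Proposition \ref{dimH^i}: if $\delta(z)=\prod_\tau\tau(z)^{m_\tau}$ with all $m_\tau\le 0$ then $H^0(\rr_L(\delta))\cong E$, so $\dim_E H^0=1$; if $\delta(z)=|\mathrm{N}_{L/\Q_p}(z)|_p\prod_\tau\tau(z)^{k_\tau}$ with all $k_\tau\ge 1$ then $\delta$ is not of the shape that forces $H^0$ to be nonzero, so $\dim_E H^0=0$. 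Hence everything reduces to computing the tangent space $t_D=\mathcal{D}_{\mathrm{dR}}(D)/\mathrm{Fil}^0\mathcal{D}_{\mathrm{dR}}(D)$.

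First I would note that $\rr_L(\delta)$ is de Rham — in fact crystalline — in both cases: the restriction of $\delta$ to $\oo_L^{\times}$ is the algebraic character $u\mapsto\prod_\tau\tau(u)^{m_\tau}$ (resp.\ $u\mapsto\prod_\tau\tau(u)^{k_\tau}$), since $|\mathrm{N}_{L/\Q_p}(\cdot)|_p$ is trivial on units and only modifies $\varphi(e_\delta)=\delta(\pi_L)e_\delta$. Consequently $\mathcal{D}_{\mathrm{dR}}(\rr_L(\delta))$ is free of rank one over $L\otimes_{\Q_p}E$, hence of $E$-dimension $[L:\Q_p]=d$ (compare \cite{NakPhi} and \cite[\S 1.2.1]{BenLinv}), and its Hodge--Tate weights are $\{m_\tau\}_\tau$, resp.\ $\{k_\tau\}_\tau$, in the normalization already used in Proposition \ref{dimH^i} — the one under which the cup-product duality $H^i(D)\times H^{2-i}(D^*(\cycar))\to H^2(\cycar)$ matches the branch ``$m_\tau\le 0$'' of $H^0$ with the branch ``$k_\tau\ge 1$'' of $H^2$. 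Now conclude case by case: when all $m_\tau\le 0$ the Hodge--Tate weights are $\le 0$, so $\mathrm{Fil}^0\mathcal{D}_{\mathrm{dR}}(\rr_L(\delta))=\mathcal{D}_{\mathrm{dR}}(\rr_L(\delta))$ and $t_D=0$, whence $\dim_E H^1_{\mathrm{f}}=1+0=1$; when all $k_\tau\ge 1$ the Hodge--Tate weights are $\ge 1>0$, so $\mathrm{Fil}^0\mathcal{D}_{\mathrm{dR}}(\rr_L(\delta))=0$ and $t_D=\mathcal{D}_{\mathrm{dR}}(\rr_L(\delta))$ has $E$-dimension $d$, whence $\dim_E H^1_{\mathrm{f}}=0+d=d$.

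The only genuinely delicate point is the normalization bookkeeping: one must be sure that the convention under which $\rr_L(\delta)$ carries Hodge--Tate weights $\{m_\tau\}$ is exactly the convention implicit in Proposition \ref{dimH^i}, so that the dichotomy ``$m_\tau\le 0$'' is literally the dichotomy ``$\mathrm{Fil}^0\mathcal{D}_{\mathrm{dR}}$ is everything'', and the dichotomy ``$k_\tau\ge 1$'' — which is the previous one twisted by the norm character $|\mathrm{N}_{L/\Q_p}(\cdot)|_p$, that is, by the cyclotomic shift appearing in the duality — is literally ``$\mathrm{Fil}^0\mathcal{D}_{\mathrm{dR}}=0$''. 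Everything else is formal. As a sanity check, the two values $1$ and $d$ are consistent with the Euler characteristic $\dim_E H^0-\dim_E H^1+\dim_E H^2=-[L:\Q_p]$ that one reads off from Proposition \ref{dimH^i} in all three of its cases.
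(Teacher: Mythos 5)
Your proof is correct and follows exactly the route the paper intends: the lemma is stated there as an immediate consequence of the formula $\mathrm{dim}_{E}H^1_{\mathrm{f}}(D)=\mathrm{dim}_{E}H^0(D)+\mathrm{dim}_{E}t_D$ together with the $H^0$ computation of Proposition \ref{dimH^i}, and your case-by-case evaluation of $t_D$ (zero when all weights are non-positive, all of $\mathcal{D}_{\mathrm{dR}}$, of $E$-dimension $d$, in the twisted case) just makes explicit the step the paper leaves implicit. The normalization caveat you flag is handled correctly, so nothing is missing.
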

We now want to calculate $H^{1}_{\mathrm{f}}(\rr_L (\delta))$ for $\delta(z)=\prod_{\tau}\tau(z)^{m_\tau}$ with $m_{\tau} \leq 0$. We recall the following lemma \cite[Lemma 1.4.3]{BenLinv}
\begin{lemma}
The extension in $H^{1}(\rr_L (\delta))$ corresponding to the couple $(a,b)$ is crystalline if and only if the equation $(1-\gamma)x=b$ has a solution in $D\left[\frac{1}{t}\right]$
\end{lemma}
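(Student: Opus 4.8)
The statement is \cite[Lemma 1.4.3]{BenLinv} applied to $D=\rr_L(\delta)$, and the plan is to write down the extension attached to a cocycle completely explicitly and then test crystallinity directly against $\calD_{\cris}$. Recall first that $H^1(D)$ is computed by the $C_{\vphi,\gamma}$-complex
\[
D \xrightarrow{\,x\mapsto((\vphi-1)x,(\gamma-1)x)\,} D\oplus D \xrightarrow{\,(a,b)\mapsto(\gamma-1)a-(\vphi-1)b\,} D,
\]
so a class in $H^1(D)$ is represented by a pair $(a,b)\in D\oplus D$ with $(\gamma-1)a=(\vphi-1)b$, well defined modulo $((\vphi-1)u,(\gamma-1)u)$. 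To such a cocycle I attach the extension $0\to D\to D_{(a,b)}\to\rr_L\to 0$, where $D_{(a,b)}=D\oplus\rr_L\,e$ as an $\rr_L$-module, $e$ maps to $1\in\rr_L$, and $\vphi(e)=e+a$, $\gamma(e)=e+b$. The cocycle relation is exactly the condition $\vphi\gamma(e)=\gamma\vphi(e)$, so $D_{(a,b)}$ is a genuine $(\vphi,\G_L)$-module over $\rr_L$; a change of cocycle by a coboundary replaces $e$ by $e+u$ and gives an isomorphic extension, and by definition the class lies in $H^1_{\mathrm f}(D)$ iff $D_{(a,b)}$ is crystalline.

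Next I would reduce crystallinity of $D_{(a,b)}$ to a lifting problem by applying $\calD_{\cris}(M)=\bigl(M[1/t]\bigr)^{\G_L}$ to the exact sequence. The functor is left exact; $D=\rr_L(\delta)$ with $\delta(z)=\prod_\tau\tau(z)^{m_\tau}$, $m_\tau\le 0$, is crystalline (it has no monodromy: multiplying $e_\delta$ by the appropriate product of powers of $t$ produces a full-rank $\G_L$-invariant family inside $\rr_L(\delta)[1/t]$), and $\rr_L$ is crystalline with $\calD_{\cris}(\rr_L)$ free of rank one over $L_0\otimes E$ generated by the image of $1$. Comparing ranks then shows that $D_{(a,b)}$ is crystalline iff $\calD_{\cris}(D_{(a,b)})\to\calD_{\cris}(\rr_L)$ is surjective, iff the generator $1$ lifts to a $\G_L$-invariant element of $D_{(a,b)}[1/t]$. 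Since scaling $e$ only changes its image, any preimage of $1$ has the shape $e-x$ with $x\in D[1/t]$; hence $D_{(a,b)}$ is crystalline iff there is $x\in D[1/t]$ with $\gamma(e-x)=e-x$, where $\G_L$-invariance is equivalent to $\gamma$-invariance because $\G_L$ is topologically generated by the chosen $\gamma$.

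Finally I would unwind this. From $\gamma(e)=e+b$ one gets $\gamma(e-x)=e+b-\gamma(x)$, so $\gamma(e-x)=e-x$ is equivalent to $(1-\gamma)x=-b$, and replacing $x$ by $-x$ this is the equation $(1-\gamma)x=b$ of the statement. As a consistency check, replacing the cocycle $(a,b)$ by $((\vphi-1)u,(\gamma-1)u+b)$ replaces a solution $x$ by $x-u$, so solvability of $(1-\gamma)x=b$ in $D[1/t]$ depends only on the cohomology class, as it must.

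The one genuine point is the reduction in the second step: that crystallinity of the middle term of the extension is governed purely by surjectivity onto $\calD_{\cris}(\rr_L)$. This rests on $D=\rr_L(\delta)$ being itself crystalline (which is where the hypothesis $m_\tau\le 0$ enters, ruling out the positive Hodge--Tate weights that would force nontrivial monodromy) together with the left-exactness and rank behaviour of $\calD_{\cris}$ for $(\vphi,\G_L)$-modules over $\rr_L$; when $L$ is ramified one must also check with a little care that $\bigl(\rr_L(\delta)[1/t]\bigr)^{\G_L}$ has the expected rank over $L_0\otimes E$. Once that is in place, the remainder is bookkeeping with the explicit cocycle $(a,b)$.
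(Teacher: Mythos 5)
Your proof is correct and follows the standard Benois-style argument, which is also the route that the cited reference \cite[Lemma~1.4.3]{BenLinv} takes: identify the Yoneda extension attached to a $C_{\varphi,\gamma}$-cocycle $(a,b)$ explicitly as $D\oplus\rr_L e$ with $\varphi(e)=e+a$, $\gamma(e)=e+b$, then use left-exactness of $\mathcal D_{\cris}=(-[1/t])^{\G_L}$ together with crystallinity of the base $\rr_L(\delta)$ (here the restriction $m_\tau\le 0$ matters) to reduce crystallinity of the extension to lifting a basis element of $\mathcal D_{\cris}(\rr_L)$ to a $\gamma$-invariant vector $e-x$ in $D_{(a,b)}[1/t]$, which unwinds to $(1-\gamma)x=b$. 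The paper itself only cites Benois for this lemma, so your write-up fills that gap correctly; the one place you rightly flag as needing a bit of care is the rank bookkeeping for $\mathcal D_{\cris}$ over $L_0'\otimes E$ when $L$ is ramified, but this is standard and does not affect the argument.
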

The following proposition in an immediate consequence of the above lemma  \cite[Theorem 1.5.7 (i)]{BenLinv} (see also the construction of \cite{NakPhi} at page 900)
\begin{prop}\label{basef}
Let $e_{\delta}$ be a basis for $\rr_L (\delta)$. Then $x_m=\mathrm{cl}(t^m,0)e_{\delta}$ is a basis of $H^{1}_{\mathrm{f}}(\rr_L (\delta))$.
\end{prop}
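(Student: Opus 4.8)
The plan is to combine the crystalline-extension criterion (the lemma of \cite[Lemma 1.4.3]{BenLinv} just quoted) with the dimension count already established. First I would recall that, by the previous lemma, when $\delta(z)=\prod_{\tau}\tau(z)^{m_\tau}$ with all $m_\tau\le 0$ we have $\dim_E H^1_{\mathrm f}(\rr_L(\delta))=1$, so it suffices to exhibit one nonzero class in $H^1_{\mathrm f}$ and to check that the class $x_m=\mathrm{cl}(t^m,0)e_\delta$ is both well-defined (a genuine cocycle in the $C_{\varphi,\gamma}$-complex) and nonzero. For well-definedness, note that $t^m\otimes e_\delta$ is exactly the generator of $H^0(\rr_L(\delta))$ produced in Proposition \ref{dimH^i}: the element $t^{m}$ lies in the appropriate sub-ring of $\rr_L$ that embeds into $B_{\mathrm{dR}}^+$, and $\varphi(t^m\otimes e_\delta)=t^m\otimes e_\delta$, $\gamma(t^m\otimes e_\delta)=t^m\otimes e_\delta$ by the defining formulas for the $\varphi$- and $\G_L$-action on $\rr_L(\delta)$ twisted by the powers of $t$. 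Hence the pair $(t^m,0)$ — meaning $(\varphi-1)$ applied to a lift is $0$ and $(\gamma-1)$ applied is $0$ in the relevant degree — is a cocycle, so $\mathrm{cl}(t^m,0)$ makes sense in $H^1(\rr_L(\delta))$.

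Next I would verify that $x_m$ is crystalline, i.e. lies in $H^1_{\mathrm f}$. By \cite[Lemma 1.4.3]{BenLinv}, the extension attached to a couple $(a,b)$ is crystalline precisely when $(1-\gamma)x=b$ is solvable in $D[\frac1t]$; for the couple $(t^m,0)$ we have $b=0$, so $x=0$ solves it trivially. Therefore $x_m\in H^1_{\mathrm f}(\rr_L(\delta))$. It remains to see $x_m\ne 0$ in $H^1$, equivalently that $(t^m,0)$ is not a coboundary: a coboundary in the $C_{\varphi,\gamma}$-complex has first component of the form $(\varphi-1)y$, and since $\varphi$ acts on $\rr_L(\delta)$ with $\varphi(e_\delta)=\delta(\pi_L)e_\delta$ and $\delta(\pi_L)$ is (in this range of $\delta$) not such that $t^m$ lies in the image of $\varphi-1$, one gets a contradiction; more cleanly, one can invoke that $H^0(\rr_L(\delta))=E\cdot(t^m\otimes e_\delta)$ is one-dimensional and the connecting map forces $\mathrm{cl}(t^m,0)$ to be the nonzero image of the generator, exactly as in \cite[Theorem 1.5.7 (i)]{BenLinv} and the explicit construction on page 900 of \cite{NakPhi}. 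Since $H^1_{\mathrm f}$ is one-dimensional and $x_m$ is a nonzero element of it, $x_m$ is a basis.

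The only genuinely delicate point — the "main obstacle" — is the bookkeeping with the powers of $t$ when $L$ is ramified: one must make sure that $t^m=\oplus_\tau t^{m_\tau}$ genuinely sits in a sub-ring of $\rr_L$ admitting the $B_{\mathrm{dR}}^+$-embedding used implicitly (as flagged in the remark after Proposition \ref{dimH^i}), so that the computations $\varphi(t^m)=t^m\cdot(\text{unit cancelling }\delta(\pi_L))$ and $\gamma(t^m)=t^m\cdot(\text{unit cancelling }\delta(\cycar(\gamma)))$ are valid and the cocycle condition is met; and that the resulting class is independent of the choices (the generator $\gamma_L$, the embeddings). All of this is either in \cite[\S 1.2.1, \S 1.5]{BenLinv} or reduces, after the observation that $b=0$, to the unramified case handled by Nakamura. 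I would therefore keep the proof short: cite \cite[Lemma 1.4.3]{BenLinv} for the crystalline criterion applied to $(t^m,0)$, cite \cite[Theorem 1.5.7 (i)]{BenLinv} and the construction in \cite{NakPhi} for the fact that this class generates, and invoke the dimension formula \eqref{dimH1f} together with the preceding lemma to conclude it is a basis.
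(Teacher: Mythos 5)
Your proposal is correct and follows essentially the same route as the paper, which simply declares the statement an immediate consequence of the quoted crystallinity criterion (applied to the couple $(t^m,0)$ with $b=0$) together with the dimension formula, citing \cite[Theorem 1.5.7 (i)]{BenLinv} and the construction in \cite{NakPhi}. The only loosely argued point in your write-up, the non-vanishing of $\mathrm{cl}(t^m,0)$ in $H^1(\rr_L(\delta))$, is exactly what those same references supply, so nothing essential is missing.
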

\begin{rem}
If $\delta$ is the trivial character then $x_0$ corresponds (via class field theory) to the unramified $\Z_p$-extension of $ \mathrm{Hom}(G_L,E^{\times})\cong H^1(G_L,E) $.
\end{rem}
We have now to cut out a ``canonical'' one-dimensional subspace in $H^{1}(\rr_L (\delta))$ which trivially intersects $H^{1}_{\mathrm{f}}(\rr_L (\delta))$ (and reduces to the cyclotomic $\Z_p$-extension in the sense of the previous remark).\\
We introduce another extension. We define $y_k=\frac{1}{e_L}\log(\cycar(\gamma_L))\mathrm{cl}(0,t^m)e_{\delta}$. \\
We explain why this cocycle is of interest for us. %Suppose for simplicity that $e_L=1$. Note that $\rr_L=\rr_{\Q_p}\otimes L$ where the $\varphi$-action is given on the second factor by the Frobenius automorphism of $L$.\\ %Let $l$ be a generator of a normal basis of $L$ over $\Q_p$.
We can calculate cohomology of induced $(\varphi,\G_{\Q_p})$-module. Indeed, we consider now two $p$-adic fields $K$ and $L$, $L$ a finite extension of $K$. The main reference for this part is \cite{LiuCD}. Let $D$ be a $(\varphi,\G_L)$-module, we define 

\begin{align*}
\mathrm{Ind}_{\G_L}^{\G_K}(D)=\lgr f: \G_K \rightarrow D \vert f(hg)=hf(g) \:\: \forall h \in \G_L \rgr.
\end{align*}
It has rank $[L:K]\mathrm{rk}_{\rr_L}(D)$ over $\rr_K$; indeed $\rr_L$ is a $\rr_K$-module of rank $[L:K]/|\G_K/\G_L|$. (The unramified part of $L/K$ plus the ramified part which is disjoint by $K_{\infty}$. See after \cite[Theorem 2.2]{LiuCD}.)
If $D$ comes from a $G_L$-representation $V$ we have 
\begin{align*}
\boldD^{\dagger}_{\mathrm{rig}}(\mathrm{Ind}_{G_L}^{G_K}(V)) = \mathrm{Ind}_{\G_L}^{\G_K}(\boldD^{\dagger}_{\mathrm{rig}}(V)).
\end{align*}
We have then the equivalent of Shapiro's lemma
\begin{align*}
H^i(D) \cong H^i(\mathrm{Ind}_{\G_L}^{\G_K}(D)).
\end{align*}
 Moreover, the aforementioned duality for $(\varphi,\G)$-modules is compatible with induction \cite[Theorem 2.2]{LiuCD}.\\
If $D\cong \rr_L(\delta)$ is free of rank one, then we have an explicit description of $\mathrm{Ind}_{\G_L}^{\G_K}(D)$. Let $e_{\infty}=|\G_K/\G_L|$, we write $\lgr \omega^{i} \rgr_{i=0}^{e_{\infty}-1}$ for $(\G_K/\G_L)^{\wedge}$. The $\mathrm{Ind}_{\G_L}^{\G_K}(D)$ is the $\rr_L$-span of $f_i$, where $f_i(g)=\omega^i(g)\delta(\cycar(g)) e_{\delta}$.\\ 

We go back to the previous setting, where $K=\Q_p$ (hence $e_{\infty}=e_L$). We have the following exact sequence 
\begin{align}\label{injinduction}
0 \rightarrow \rr_{\Q_p}(z^{\sum m_{\sigma}}) \rightarrow \mathrm{Ind}_{L}^{\Q_p}(\rr_L (\delta))\rightarrow D' \rightarrow 0,
\end{align}
where we send $e_{z^{\sum m_{\tau}}}$ to $e_{\delta}$ (the image is $\rr_{\Q_p} e_{\delta}$). Note that $z^{\sum m_{\tau}}$ is the restriction of $\delta$ to $\Q_p^*$. Note that $H^0(D')=0$ by calculation similar to \cite[Proposition 2.1]{ColTri} or \cite[Proposition 2.14]{NakPhi}. The long exact sequence in cohomology induces 
\begin{align*}
H^0(\rr_{\Q_p}(z^{\sum m_{\tau}}) ) & \stackrel{\cong}{\rightarrow} H^0(\mathrm{Ind}_{L}^{\Q_p}(\rr_L (\delta))),\\
H^1(\rr_{\Q_p}(z^{\sum m_{\tau}}) ) & \hookrightarrow H^1(\mathrm{Ind}_{L}^{\Q_p}(\rr_L (\delta))).
\end{align*} 
The first morphism is the identity: the fixed basis $t^{\sum m_{\tau}}e_{z^{\sum m_{\tau}}}$ is sent to the basis $t^k e_{\delta}$ (because $E\otimes_{\Q_p} L \cong \oplus_{\tau} E$).\\
The cocycle $x_m$ is the image of $x_{\sum  m_{\tau}}$. The image of  $y_{\sum m_{\tau}} \in H^1_{\mathrm{c}}(\rr_{\Q_p}(z^{\sum k_{\sigma}}))$ is instead $y_m$ (if $\gamma_L =\gamma_{\Q_p}^{e_L}$).

\begin{prop}
Let $D$ a $(\varphi,\G)$-module over $\rr_L$ with non-negative Hodge-Tate weight. Suppose that $\mathcal{D}_{\st}(D)=\mathcal{D}_{\st}(D)^{\varphi=1}$. Then $D$ is crystalline and
\begin{align*} D \cong \oplus \rr_L (\delta_i)
\end{align*}
with $\delta_i(z)=\prod_{\tau}\tau(z)^{k_{i,\tau}}$.
\end{prop}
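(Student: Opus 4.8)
The plan is to show that the two hypotheses force $D$ to be crystalline with Frobenius acting as the identity, and then to read off the decomposition from the dictionary between crystalline $(\varphi,\G_L)$-modules over $\rr_L$ and filtered $\varphi$-modules over $L$.

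\textbf{Step 1 (the monodromy vanishes).} On $\mathcal{D}_{\st}(D)$ one has the relation $N\varphi=p\varphi N$. By hypothesis $\varphi$ acts as the identity on $\mathcal{D}_{\st}(D)$ (equivalently, $1$ is its only eigenvalue), so $\varphi$ is invertible and $N=p\,\varphi N\varphi^{-1}$; thus any element of the image of $N$ would be a Frobenius eigenvector with eigenvalue $p^{-1}$, which does not occur. Hence $N=0$, so $\mathcal{D}_{\st}(D)=\mathcal{D}_{\cris}(D)$; in particular $D$ is crystalline and $\varphi$ acts as the identity on $\mathcal{D}_{\cris}(D)$.

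\textbf{Step 2 (reduce to filtered $\varphi$-modules and decompose).} I would then invoke Berger's classification: a crystalline $(\varphi,\G_L)$-module over $\rr_L$ is recovered from the filtered $\varphi$-module $\bigl(\mathcal{D}_{\cris}(D),\varphi,\mathrm{Fil}^{\bullet}\mathcal{D}_{\mathrm{dR}}(D)\bigr)$, with \emph{no} weak-admissibility condition imposed — this is exactly why the statement is about $(\varphi,\G)$-modules and not about Galois representations. Since $\varphi$ is scalar, the underlying $\varphi$-module is a sum of copies of the trivial one, generated by $\varphi$-fixed vectors, while the Hodge filtration decomposes over $L\otimes_{\Q_p}E$ into filtrations indexed by the embeddings $\tau\colon L\hookrightarrow E$. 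Choosing an $E$-basis of $\mathcal{D}_{\cris}(D)^{\varphi=1}$ adapted to these filtrations exhibits the filtered $\varphi$-module as a direct sum of rank-one objects; translating back gives $D\cong\bigoplus_i\rr_L(\delta_i)$, and the shape $\delta_i(z)=\prod_{\tau}\tau(z)^{k_{i,\tau}}$ drops out because $\varphi=\mathrm{id}$ kills any unramified twist, while the non-negativity of the Hodge–Tate weights pins down the sign of each $k_{i,\tau}$.

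\textbf{Main obstacle.} I expect the crux to be the descent in Step 2 from a splitting of the filtered $\varphi$-module to an honest $(\varphi,\G_L)$-module decomposition of $D$, i.e.\ the statement that a crystalline self-extension of such rank-one pieces, with $\varphi=\mathrm{id}$ and non-negative Hodge–Tate weights, is split. The key inputs are exactly the two hypotheses: the fact that the condition is $\mathcal{D}_{\st}(D)=\mathcal{D}_{\st}(D)^{\varphi=1}$ — and not merely ``$1$ is a Frobenius eigenvalue'' — makes $\varphi$ semisimple, so the $\varphi$-part of any such extension splits; what remains is the elementary problem of splitting a short exact sequence of filtered vector spaces, handled embedding by embedding, where one must keep track of the interaction of the Hodge filtrations in the different $\tau$ and use that non-negativity of the weights makes one weight dominate the other in each embedding. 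An equivalent route, closer to the cohomological computations of Proposition~\ref{dimH^i}, is an induction on $\mathrm{rk}_{\rr_L}D$: a $\varphi$-fixed vector of $\mathcal{D}_{\cris}(D)$, after clearing $t$-denominators, spans a saturated rank-one sub-$(\varphi,\G_L)$-module $\rr_L(\delta_1)$ of $D$; the quotient inherits all the hypotheses, hence decomposes by induction; and the extension splits by the vanishing of the relevant crystalline $\mathrm{Ext}^1$.
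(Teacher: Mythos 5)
Your Step~1 is the paper's argument (with $\varphi$ acting as the identity, $N\varphi=p\varphi N$ forces $N=0$), and the inductive route you sketch at the end --- split off a saturated rank-one sub-$(\varphi,\G_L)$-module generated by a $\varphi$-fixed vector, decompose the quotient by induction, and split the remaining extension --- is in outline exactly the paper's proof, which reduces everything to a rank-two extension. But the reason you give for that final splitting, ``the vanishing of the relevant crystalline $\mathrm{Ext}^1$'', is not correct. Extensions of $\rr_L(\delta_2)$ by $\rr_L(\delta_1)$ are classified by $H^1(\rr_L(\delta_1\delta_2^{-1}))$, and when the weights are ordered so that $\delta_1\delta_2^{-1}(z)=\prod_{\tau}\tau(z)^{m_\tau}$ with $m_\tau\le 0$, the paper's own lemma deduced from (\ref{dimH1f}) gives $\dim_E H^1_{\mathrm{f}}=1$, not $0$, with basis $x_m=\mathrm{cl}(t^m,0)e_{\delta}$ as in Proposition \ref{basef}. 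What kills this class is not crystallinity but the hypothesis $\mathcal{D}_{\st}(D)=\mathcal{D}_{\st}(D)^{\varphi=1}$: the extension attached to $x_m$ admits a lift $\tilde e$ with $(\varphi-1)\tilde e=t^m e_{\delta}$ and $(\gamma-1)\tilde e=0$, so $\tilde e$ and $t^m e_{\delta}$ both lie in $\mathcal{D}_{\cris}$ and $\varphi$ acts there by a nontrivial unipotent operator, contradicting the hypothesis. This is precisely what the paper's choice of a lift $m_2$ fixed by $\varphi$ and $\gamma$ exploits. Your remark that the hypothesis forces $\varphi$ to be semisimple is the right mechanism, but it must replace the $\mathrm{Ext}^1$-vanishing claim rather than accompany it, and it only bites after the weights are ordered as above: for the opposite ordering, and as soon as the weights differ at two or more embeddings, $H^1_{\mathrm{f}}$ contains non-split classes on which $\varphi$ does act as the identity and whose non-splitness sits entirely in the filtration.

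The weak point of your main (Berger-equivalence) route is the sentence ``choosing an $E$-basis of $\mathcal{D}_{\cris}(D)^{\varphi=1}$ adapted to these filtrations exhibits the filtered $\varphi$-module as a direct sum of rank-one objects''. A rank-one $(\varphi,\G_L)$-summand corresponds to a $\varphi$-stable free rank-one submodule of $\mathcal{D}_{\cris}(D)$, and since $\varphi$ is semilinear and fixes $\mathcal{D}_{\cris}(D)$ pointwise, such a submodule cuts out the \emph{same} line of $\mathcal{D}_{\cris}(D)^{\varphi=1}$ in every embedding $\tau$; so what you actually need is one decomposition of $\mathcal{D}_{\cris}(D)^{\varphi=1}$ into lines that is simultaneously adapted to all $[L:\Q_p]$ filtrations. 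This is not a formality: a two-dimensional space carrying three pairwise distinct filtration lines admits no decomposition into lines adapted to all three, and non-negativity of the weights does nothing to prevent such a configuration, so ``handled embedding by embedding'' is not an argument. To be fair, the paper's own rank-two step (``as the extension is crystalline, $\gamma$ acts trivially too, hence the extension splits'') is terse at exactly the same spot, so you are in good company; but as written this step is a genuine gap in your proposal, and a self-contained write-up should either restrict to the situations where the filtrations at the different embeddings are forced to be compatible (e.g.\ $L=\Q_p$, or all the relevant weights equal) or explain how several filtrations are split at once.
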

\begin{proof}
We follow closely the proof \cite[Proposition 1.5.8]{BenLinv}. As $N\varphi=p\varphi N$ we obtain immediately that $N=0$, hence $D$ is crystalline.\\
Let $r$ be the rank of $D$ over $\rr_L$. We write the Hodge-Tate weight as $(k_i)_{i=1}^r$ where $k_i={(k_{i,\tau})}_{\tau}$ and $k_i \leq k_{i+1}$. We prove it by induction; the case $r=1$ is clear. \\
For $r=2$ we can suppose $k_1=0$ by twisting. Let $\delta$ be defined by $\prod_{\tau}\tau(z)^{k_{\tau}}$. So we have an extension of $\rr_L (\delta)$ by $\rr_L$.  Let $m_2$ be a lift to $D$ of a basis of $\rr_1$. As $\varphi=1$ we have $\varphi m_2=m_2$. As the extension is crystalline we know  that $\gamma$ acts trivially too, hence the extension splits.\\
Suppose now $r >2$. Take $v$ in $\mathrm{Fil}^{k_d}\mathcal{D}_{\st}$. Define $\delta_d$ by $\delta_d(z)=\prod_{\tau}\tau(z)^{k_{d,\tau}}$, we have 
\begin{align*}
0 \rightarrow \rr_L (\delta_d) \rightarrow D \rightarrow D' \rightarrow 0.
\end{align*}
By inductive hypothesis $D' \cong \oplus_{i=1}^{d-1} \rr_L (\delta_i)$. We can write 
\begin{align*}
\mathrm{Ext}(D', \rr_L (\delta_d)) = \oplus_{i=1}^{d-1}\mathrm{Ext}(\rr_L (\delta_i), \rr_L (\delta_d))
\end{align*}
and we are reduced to the case $r=2$ which has already been dealt.
\end{proof}

We consider a $(\varphi,\G)$-module $M$ which sits in the non-split exact sequence 
\begin{align}\label{caseM}
0 \rightarrow M_0:=\oplus_{i=1}^r \rr_L (\delta_i) \rightarrow M \rightarrow M_1:= \oplus_{i=1}^r \rr_L (\delta_i') \rightarrow 0, 
\end{align}
where $\delta_i(z)=|\mathrm{N}_{L/\Q_p}(z)|_p\prod_{\tau}\tau(z)^{m_{i,\tau}}$ with $m_{i,\tau} \geq 1$ for all $\tau$ and $\delta_i'(z)=\prod_{\tau}\tau(z)^{k_{i,\tau}}$ with  $k_{i,\tau} \leq 0$ for all $\tau$. We say that $M$ is of type $U_{m,k}$ if the image of $M$ in $H^1(M_1)$ is crystalline. 
\begin{prop}\label{Mstruct}
Suppose that $M$ is not of type $U_{m,k}$. Then we have $\mathrm{dim}_E(H^1(M))=2[L:\Q_p]r$ and $H^2(M)=H^0(M)=0$. Moreover, if we write
\begin{align*}
0 \rightarrow H^0(M_1) \stackrel{\Delta_0}{\rightarrow} H^1(M_0) \stackrel{f_1}{\rightarrow} H^1(M) \stackrel{g_1}{\rightarrow} H^{1}(M_1) \stackrel{\Delta_1}{\rightarrow} H^{2}(M_0)\rightarrow 0
\end{align*}
we have $H^{1}(M_0)=\mathrm{Im}(\Delta_1)\oplus H^{1}_{\mathrm{f}}(M_0)$, $\mathrm{Im}(f_1)=H^1_{\mathrm{f}}(M)$ and $H^1(M_1)=\mathrm{Im}(g_1)\oplus H^{1}_{\mathrm{f}}(M_1)$.
\end{prop}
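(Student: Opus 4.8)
The plan is to extract all the stated facts from the long exact sequence in $(\varphi,\G_L)$-cohomology attached to (\ref{caseM}), using the dimension count from Proposition \ref{dimH^i}, the description of $H^1_{\mathrm{f}}$ from (\ref{dimH1f}) and the Lemma following it, and the explicit cocycles $x_m$, $y_k$ constructed above. First I would compute the relevant cohomology dimensions of the two ends. By hypothesis each $\delta_i$ is of the form $|\mathrm{N}_{L/\Q_p}(z)|_p\prod_\tau \tau(z)^{m_{i,\tau}}$ with $m_{i,\tau}\ge 1$, so Proposition \ref{dimH^i} gives $H^0(M_0)=0$, $\dim_E H^1(M_0)=r([L:\Q_p]+1)$ and $\dim_E H^2(M_0)=r$; likewise each $\delta_i'$ is of the form $\prod_\tau\tau(z)^{k_{i,\tau}}$ with $k_{i,\tau}\le 0$, so $\dim_E H^0(M_1)=r$, $\dim_E H^1(M_1)=r([L:\Q_p]+1)$ and $H^2(M_1)=0$. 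Taking the long exact sequence of (\ref{caseM}) and using $H^0(M_0)=0$ and $H^2(M_1)=0$ one immediately gets the exact sequence displayed in the proposition, with $H^0(M)\hookrightarrow H^0(M_1)$ and $H^2(M_0)\twoheadrightarrow H^2(M)$ at the two ends.

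Next I would pin down the boundary maps $\Delta_0$ and $\Delta_1$. The map $\Delta_0\colon H^0(M_1)\to H^1(M_0)$ is (up to sign) cup product with the class $[M]\in \mathrm{Ext}^1(M_1,M_0)=H^1(\hm(M_1,M_0))$ of the extension; since $M$ is \emph{not} of type $U_{m,k}$, the image of $[M]$ in $H^1(M_1^*\otimes M_0)$ modulo the crystalline part is nonzero, which is exactly the condition forcing $\Delta_0$ to be injective on $H^0(M_1)$ — equivalently, the induced extension of $\rr_L(\delta_i')$ by $\rr_L(\delta_j)$ that meets the generator $t^{k_i}\otimes e_{\delta_i'}$ of $H^0$ is non-crystalline, so $\Delta_0$ does not kill it. Hence $H^0(M)=\ker\Delta_0=0$. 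Dually (using the cup-product duality $H^i(D)\times H^{2-i}(D^*(\cycar))\to H^2(\cycar)$ recalled after Proposition \ref{dimH^i}, together with the fact that the dual extension $M^*(\cycar)$ is again of the same shape and not of type $U$), the map $H^1(M_1)\xrightarrow{\Delta_1} H^2(M_0)$ is surjective, so $H^2(M)=\mathrm{coker}\,\Delta_1=0$. Now the dimension of $H^1(M)$ drops out by additivity of Euler characteristics in the six-term sequence: $\dim_E H^1(M)=\dim_E H^1(M_0)+\dim_E H^1(M_1)-\dim_E H^0(M_1)-\dim_E H^2(M_0)=r([L:\Q_p]+1)+r([L:\Q_p]+1)-r-r=2[L:\Q_p]r$.

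It remains to identify the three "$\oplus H^1_{\mathrm{f}}$" decompositions. For $H^1(M_1)=\mathrm{Im}(g_1)\oplus H^1_{\mathrm{f}}(M_1)$: by the Lemma above $\dim_E H^1_{\mathrm{f}}(M_1)=\dim_E H^0(M_1)+\dim_E t_{M_1}=r+0=r$ (the weights $k_{i,\tau}\le 0$ so $t_{M_1}=0$), and $\dim_E\mathrm{Im}(g_1)=\dim_E H^1(M_1)-\dim_E\mathrm{Im}(\Delta_1)=r([L:\Q_p]+1)-r=r[L:\Q_p]$; since these add up to $\dim_E H^1(M_1)$ it suffices to check $\mathrm{Im}(g_1)\cap H^1_{\mathrm{f}}(M_1)=0$, i.e. that a class in $H^1_{\mathrm{f}}(M_1)$ lifting to $M$ would force a crystalline splitting contradicting the non-$U_{m,k}$ hypothesis — this is the same mechanism as above, applied factor by factor with $H^1_{\mathrm{f}}$ spanned by the $x_{k_i}=\mathrm{cl}(t^{k_i},0)e_{\delta_i'}$ of Proposition \ref{basef}. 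Dually, $H^1(M_0)=\mathrm{Im}(\Delta_1)\oplus H^1_{\mathrm{f}}(M_0)$ follows from $\dim_E H^1_{\mathrm{f}}(M_0)=\dim_E H^0(M_0)+\dim_E t_{M_0}=0+r[L:\Q_p]=r[L:\Q_p]$ (the weights of $M_0$ are all $\ge 1$, so $\dim_E t_{M_0}=[L:\Q_p]$ per factor) and $\dim_E\mathrm{Im}(\Delta_1)=r$; the transversality $\mathrm{Im}(\Delta_1)\cap H^1_{\mathrm{f}}(M_0)=0$ is dual, under the cup product pairing $H^1(M_0)\times H^1(M_0^*(\cycar))\to E$, to the statement $\mathrm{Im}(g_1')\cap H^1_{\mathrm{f}}$ on the dual side, using that $H^1_{\mathrm{f}}$ and $H^1_{\mathrm{f}}$ of the dual are exact annihilators of each other (local Tate duality compatibility for finite/singular parts, as in the references cited). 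Finally $\mathrm{Im}(f_1)=H^1_{\mathrm{f}}(M)$: the inclusion $\mathrm{Im}(f_1)\subseteq H^1_{\mathrm{f}}(M)$ is formal from functoriality of $H^1_{\mathrm{f}}$ (a class coming from $H^1(M_0)/\ker f_1 = H^1_{\mathrm{f}}(M_0)$ is crystalline), and a dimension count using (\ref{dimH1f}) for $M$ — $\dim_E H^1_{\mathrm{f}}(M)=\dim_E H^0(M)+\dim_E t_M=0+r[L:\Q_p]$ (the $M_0$-weights contribute, the $M_1$-weights do not) — matches $\dim_E\mathrm{Im}(f_1)=\dim_E H^1_{\mathrm{f}}(M_0)=r[L:\Q_p]$, giving equality.

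I expect the main obstacle to be the transversality statements — that the images of the connecting maps genuinely intersect the $H^1_{\mathrm{f}}$'s trivially — since these are exactly where the hypothesis "$M$ not of type $U_{m,k}$" must be converted into a concrete non-vanishing of a cup product. Everything else is bookkeeping with the dimension formulas of Proposition \ref{dimH^i}, (\ref{dimH1f}) and the explicit bases $x_m$, $y_k$; the cleanest route is to reduce to rank one on each side (as in the proof of the preceding proposition, writing $\mathrm{Ext}(M_1,M_0)=\bigoplus_{i,j}\mathrm{Ext}(\rr_L(\delta_i'),\rr_L(\delta_j))$) and invoke the Lemma above (crystallinity $\Leftrightarrow$ solvability of $(1-\gamma)x=b$ in $D[1/t]$) together with local duality between finite and singular quotients.
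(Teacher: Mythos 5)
Your proposal is correct and follows essentially the same route as the paper: the six-term exact sequence attached to (\ref{caseM}), the dimension counts from Proposition \ref{dimH^i} and (\ref{dimH1f}), duality with $M^*(\cycar)$, and the non-$U_{m,k}$ hypothesis to supply the transversality statements, the rest being bookkeeping. The only immaterial difference is that you invoke the hypothesis on the $M_1$-side and deduce the $M_0$-side decomposition by duality, whereas the paper does the reverse, and you are somewhat more explicit than the paper in deriving $H^0(M)=H^2(M)=0$ from the injectivity/surjectivity of the connecting maps.
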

\begin{proof}
We have $H^0(M)=0$. Note that $M^*(\cycar)$ is a module of the same type, hence $H^2(M)=H^0(M^*(\cycar))=0$. We can write 
\begin{align*}
0 \rightarrow H^0(M_1) \rightarrow H^1(M_0) \stackrel{f_1}{\rightarrow} H^1(M) \stackrel{g_1}{\rightarrow} H^{1}(M_1) \rightarrow H^{2}(M_0)\rightarrow 0
\end{align*}
and conclude by Proposition \ref{dimH^i}. \\ 
Note that $\mathrm{dim}_E H^{1}_{\mathrm{f}}(M)=rd$ by (\ref{dimH1f}).\\
By hypothesis, we have that $\mathrm{Im}(\Delta_1) \cap H^{1}_{\mathrm{f}}(M_0)=0 $ and the first statement follows from dimension counting.\\
The third statement follows from duality.\\
For the second statement $H^{1}_{\mathrm{f}}(M_0)$ injects into $H^{1}_{\mathrm{f}}(M)$. As both have the same dimension, we conclude.
\end{proof}
Suppose now that $M_0=\rr_L(\vert \mathrm{N}_{L/\Q_p}(z) \vert_{p}\prod_{\tau}z^{k_{\tau}})$ and $M_1=\rr_L(\prod_{\tau}z^{m_{\tau}})$, we give the following key proposition for the definition of the $\Ll$-invariant
\begin{lemma}\label{surjc}
The intersection of $T:=\mathrm{Im}(H^1(M))$ and $\mathrm{Im}(H^1(\rr_{\Q_p}(z^{\sum_{\tau}m_{\tau}})))$ in $\mathrm{Im}(H^1(M_1))$ is one dimensional.
\end{lemma}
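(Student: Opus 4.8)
The plan is to analyze the two subspaces inside $\mathrm{Im}(H^1(M_1)) \subseteq H^1(M_1) = H^1(\rr_L(\prod_\tau z^{m_\tau}))$ using the explicit bases from Section \ref{reminder}. First I would recall that by Proposition \ref{Mstruct} the map $g_1 \colon H^1(M) \to H^1(M_1)$ has image of dimension $[L:\Q_p]$ (since $M$ is not of type $U_{m,k}$ here: the non-split class sits crystalline-nontrivially, and in any case $\mathrm{Im}(g_1)$ is a complement to $H^1_{\mathrm{f}}(M_1)$, whose dimension is $d = [L:\Q_p]$ by the Lemma after \eqref{dimH1f}). Meanwhile $H^1(M_1) = H^1(\rr_L(\prod_\tau z^{m_\tau}))$ has dimension $[L:\Q_p]+1$ by Proposition \ref{dimH^i}, since $m_\tau \geq 1 > 0$ — wait, one must be careful: $H^1(\rr_L(\delta))$ is of dimension $[L:\Q_p]+1$ precisely when $\delta(z)=\prod\tau(z)^{m_\tau}$ with $m_\tau \leq 0$ OR when $\delta(z)=|\NN_{L/\Q_p}(z)|_p\prod\tau(z)^{k_\tau}$ with $k_\tau\geq 1$. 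Here $M_1 = \rr_L(\prod_\tau z^{m_\tau})$ with $m_\tau \geq 1$, so we are in the generic case and $\dim H^1(M_1) = [L:\Q_p]$. Hence $\mathrm{Im}(g_1) = H^1(M_1)$ is everything, i.e. $T = \mathrm{Im}(H^1(M)) = H^1(M_1)$ has dimension $[L:\Q_p]$.

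Next I would identify $\mathrm{Im}(H^1(\rr_{\Q_p}(z^{\sum_\tau m_\tau})))$ inside $H^1(M_1)$. The natural map comes from the exact sequence \eqref{injinduction} together with Shapiro's lemma: $H^1(\rr_{\Q_p}(z^{\sum m_\tau})) \hookrightarrow H^1(\mathrm{Ind}_L^{\Q_p}(\rr_L(\delta))) \cong H^1(\rr_L(\delta))$, and then project via the analogue of $g_1$ to $H^1(M_1)$; alternatively restrict the character $\prod_\tau z^{m_\tau}$ of $L^\times$ to $\Q_p^\times$, getting $z^{\sum m_\tau}$, and use functoriality. The point is that $H^1(\rr_{\Q_p}(z^{\sum m_\tau}))$ is $2$-dimensional (by the $K=\Q_p$ case of Proposition \ref{dimH^i}, the exponents being $\leq 0$... again need the sign: $\sum m_\tau \geq 1$, so actually dimension is $1$ by the generic case). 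Its image in $H^1(M_1)$ is spanned by (the image of) the distinguished class $y_{\sum m_\tau}$, which by the computation on page before Proposition \ref{Mstruct} maps to the class $y_m \in H^1(M_1)$ (up to the normalization $\gamma_L = \gamma_{\Q_p}^{e_L}$ absorbed into the $\frac{1}{e_L}\log\cycar$ factor). So $\mathrm{Im}(H^1(\rr_{\Q_p}(z^{\sum m_\tau})))$ is the line $E\cdot y_m$ inside $H^1(M_1)$.

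Then the intersection $T \cap E\cdot y_m = H^1(M_1) \cap E\cdot y_m = E\cdot y_m$, which is one-dimensional, provided we check that $y_m \neq 0$ in $H^1(M_1)$ — equivalently that the class $\mathrm{cl}(0, t^m)e_\delta$ is nonzero, which follows from the explicit $C_{\varphi,\gamma}$-complex computation (it is the ``dual'' class to the crystalline class $x_m$ of Proposition \ref{basef}, and the two together span a $2$-dimensional space only in the non-generic cases; here $y_m$ is genuinely nonzero because the $(0,t^m)$ cocycle is not a coboundary). Actually the cleanest route: since $T = H^1(M_1)$ is the whole of $H^1(M_1)$, the intersection equals $\mathrm{Im}(H^1(\rr_{\Q_p}(z^{\sum m_\tau})))$ itself, and the claim reduces to showing that this image is exactly one-dimensional, i.e. that the map $H^1(\rr_{\Q_p}(z^{\sum m_\tau})) \to H^1(M_1)$ is injective (it is, since $H^1(\rr_{\Q_p}(z^{\sum m_\tau}))$ is $1$-dimensional and spanned by a class mapping to the nonzero class $y_m$).

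The main obstacle I expect is bookkeeping with signs and dimensions: one must be scrupulous about whether the exponents $m_\tau$ put us in the ``$+1$'' cases or the generic cases of Proposition \ref{dimH^i}, and about which of $x_m, y_m$ is crystalline vs. which spans the complement $\mathrm{Im}(g_1)$. The conceptual input — that $\mathrm{Im}(H^1(M)) = \mathrm{Im}(g_1)$ complements $H^1_{\mathrm{f}}(M_1)$ and that the induced class $y_{\sum m_\tau}$ maps to $y_m$ — is already in place from Proposition \ref{Mstruct} and the discussion preceding it; the remaining work is purely linear-algebraic dimension counting plus the non-vanishing of $y_m$, which is immediate from the explicit cocycle description. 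So I would write the proof as: (1) compute $\dim H^1(M_1)$ and $\dim \mathrm{Im}(g_1)$ via Propositions \ref{dimH^i} and \ref{Mstruct}, concluding $T = \mathrm{Im}(g_1)$; (2) identify $\mathrm{Im}(H^1(\rr_{\Q_p}(z^{\sum m_\tau})))$ with $E\cdot y_m$ via \eqref{injinduction} and the page-preceding computation; (3) conclude $T \cap E\cdot y_m$ is one-dimensional, using $y_m \neq 0$.
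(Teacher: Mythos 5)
There is a genuine gap: you have the sign conventions backwards, and the whole argument is built on that misreading. In the setting of Lemma \ref{surjc} (which follows the convention of the sequence \eqref{caseM}, with the letters $k$ and $m$ swapped in the sentence just before the lemma), the sub-object $M_0=\rr_L(|\mathrm{N}_{L/\Q_p}(z)|_p\prod_\tau z^{k_\tau})$ carries the norm twist with $k_\tau\geq 1$, while the quotient $M_1=\rr_L(\prod_\tau z^{m_\tau})$ has $m_\tau\leq 0$. So $M_1$ is in the \emph{special} case of Proposition \ref{dimH^i}: $\dim_E H^1(M_1)=[L:\Q_p]+1=d+1$ (not $d$), $\dim_E H^1_{\mathrm{f}}(M_1)=1$ (not $d$; it is $M_0$ whose $H^1_{\mathrm{f}}$ has dimension $d$), and $H^2(M_0)\cong E\neq 0$, so $g_1$ is \emph{not} surjective: by Proposition \ref{Mstruct}, $T=\mathrm{Im}(g_1)$ is a $d$-dimensional complement of the line $H^1_{\mathrm{f}}(M_1)$. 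Likewise $\sum_\tau m_\tau\leq 0$, so $H^1(\rr_{\Q_p}(z^{\sum_\tau m_\tau}))$ is $2$-dimensional, spanned by $x_{\sum m_\tau}$ and $y_{\sum m_\tau}$, whose images under the injection coming from \eqref{injinduction} are $x_m$ and $y_m$; in your (generic-case) reading the classes $x_m,y_m$ of Proposition \ref{basef} would not even be defined. Each of your three steps — $\dim H^1(M_1)=d$, $T=H^1(M_1)$, and the induced image being the line $E\cdot y_m$ — is therefore false in the actual situation.

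The correct argument (the paper's) is a short dimension count in the $(d+1)$-dimensional space: $\dim T+\dim\mathrm{Im}\bigl(H^1(\rr_{\Q_p}(z^{\sum_\tau m_\tau}))\bigr)=d+2>d+1$ forces the intersection to be nonzero, and it is at most one-dimensional because the $2$-dimensional image contains $H^1_{\mathrm{f}}(M_1)=E\cdot x_m$, which by Proposition \ref{Mstruct} meets $T=\mathrm{Im}(g_1)$ trivially. Note also that your conclusion that the intersection equals $E\cdot y_m$ cannot be right structurally: the intersection is a general line $E(a\,x_m+b\,y_m)$, and it is precisely its slope $ab^{-1}$ that produces the $\Ll$-invariant in Theorem \ref{LinvM}; if it were always the $y_m$-line the invariant would be forced to vanish.
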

\begin{proof}
The intersection is non-empty as the sum of their dimension is $d+2$ and $\mathrm{Im}(H^1(M_1))$ has dimension $d+1$. We have that $H^{1}_{\mathrm{f}}(M_1)$ is contained in $\mathrm{Im}(H^1(\rr_{\Q_p}(z^{\sum_{\tau}m_{\tau}})))$ and by the previous proposition the former is not in the image of $g_1$ and we are done.
\end{proof}
In particular, we deduce that $T$ surjects into $\mathrm{Im}(H_{\mathrm{c}}^1(\rr_{\Q_p}(z^{\sum_{\tau}m_{\tau}})))$.
%%%%%%%%%%%%%%%%%%%%%%%%%%%%%%%%%%%%%%%%%%%%%%%%%%%%%%%%%%%%%%%%%%%%%%%%%%%%%%%%%%%%%%%%%%%%%%%%%%%%%%%%%%%%5555555

%%%%%%%%%%%%%%%%%%%%%%%%%%%%%%%%%%%%%%%%%%%%%%%%%%%%%

\section{\texorpdfstring{$\Ll$-invariant over number fields}{Definition of the L-invariant over number fields}}\label{SecLinv}
Let $F$ be a number field. We consider a global Galois representation 
\begin{align*}
V : G_F \rightarrow \mathrm{GL}_n(E)
\end{align*}
where $E$ is $p$-adic field. We suppose that it is unramified outside a finite number of places $S$ containing all the $p$-adic places. We suppose moreover that it is semistable at all places above $p$ ({\it i.e.} $\mathcal{D}_{\st}(V_{\vert_{F_\pfrak}})$ is of rank $n$ over $F^{\mathrm{ur}}_{\pfrak} \otimes_{\Q_p} E$, being $F^{\mathrm{ur}}_{\pfrak}$ the maximal unramified extension of $\Q_p$ contained in $F^{\mathrm{ur}}_{\pfrak}$).\\
In this section we generalize Greenberg-Benois definition of the $\Ll$-invariant to such a $V$ when it presents trivial zeros. Note that we do not require $p$ split or unramified in $F$.\\
Let $t$ be the number of trivial zeros. The classical definition by Greenberg \cite{TTT} defines the $\Ll$-invariant as the ``slope'' of a certain $t$-dimension subspace of $H^1(G_{\Q_p},\Q_p^t)$ which is a $2t$-dimensional space with a canonical basis given by $\mathrm{ord}_p$ and $\log_p$.\\
In our setting, the main obstacle is that the cohomology of the trivial $(\varphi,\G)$-module $\rr_{F_{\pfrak}}$ is no longer two-dimensional and it is not immediate to find a suitable subspace. Inspired by Hida's work for symmetric powers of Hilbert forms \cite{HTate2}, we consider the image of $H^1(\rr_{\Q_p})$ inside $H^1(\rr_{F_{\pfrak}})$. \\
If $t$ denotes the number of expected trivial zeros, we show that we can define, similarly to \cite{BenLinv}, a $t$-dimensional subspace of $H^1(G_{\Q,S},V)$ whose image in $H^1(\rr_{\Q_p})$ has trivial intersection with the crystalline cocycle. This is enough to define the $\Ll$-invariant; we further check that our definition is compatible with Benois'.

\subsection{\texorpdfstring{Definition of the $\Ll$-invariant}{Definition of the L-invariant}}\label{defLinv}
We define local cohomological conditions $L_v$ in order to define a Selmer group; we denote by $G_v$ a fixed decomposition group at $v$ in $G_{F,S}$ and by $I_v$ the inertia. For $v \nmid p$ we define
 \begin{align*}
L_v:=\mathrm{Ker}\left(H^1(G_{v},V)\rightarrow H^1(I_v,V) \right).
\end{align*}
If $v \mid p$ we define
\begin{align*}
L_v:=H^1_{\mathrm{f}}(F_v,V) = \mathrm{Ker}(H^1(G_v,V)\rightarrow H^{1}(G_v, V \otimes_{E} \boldB_{\mathrm{cris}})).
\end{align*}
If $\boldD^{\dagger}_{\mathrm{rig}}(V)$ denotes the $(\varphi,\G)$-module associated with $V$ we also have $L_p = H_{\mathrm{f}}^{1}(\boldD^{\dagger}_{\mathrm{rig}}(V))$.
We define then the Bloch-Kato Selmer group 
\begin{align*}
H^1_{\mathrm{f}}(V) :=\mathrm{Ker}\left( H^1(G_{F,S},V) \rightarrow \prod_{v\in S}\frac{H^1(D_v,V)}{L_v} \right).
\end{align*}
We make the following additional hypotheses 
\begin{itemize}
\item[\bfseries{C1})] $H^1_{\mathrm{f}}(V)=H^1_{\mathrm{f}}(V^*(1))=0$,
\item[\bfseries{C2})] $H^0(G_{F,S},V)=H^0(G_{F,S},V^*(1))=0$,
\item[\bfseries{C3})] $\varphi$ on $\boldD_{\mathrm{st}}(V_{\vert_{F_{\pfrak}}})$ is semisimple at $1 \in F^{\mathrm{ur}}_{\pfrak} \otimes_{\Q_p} E$ and $p^{-1} \in F^{\mathrm{ur}}_{\pfrak} \otimes_{\Q_p} E$ for all $\pfrak \mid p$,
\item[\bfseries{C4})] $\boldD^{\dagger}_{\mathrm{rig}}(V_{\vert_{F_{\pfrak}}})$ has no saturated sub-quotient of type $U_{m,k}$ for all $\pfrak \mid p$.
\end{itemize}
Note that if $V$ satisfies the previous four conditions, so does $V^*(1)$.\\
The first two conditions tell us that the Poitou-Tate sequence reduces to 
\begin{align}\label{PT}
H^1(G_{F,S},V) \cong \bigoplus_{v \in S} \frac{H^1(D_v,V)}{H^1_{\mathrm{f}}(V,F_v)}.
\end{align}

For each $\pfrak \mid p$ we denote by $V_{\pfrak}$ the restriction to $G_{F_{\pfrak}}$ of $V$. We choose a regular sub-module $D_{\pfrak} \subset \boldD_{\mathrm{st}}(V_{\pfrak})$ and define a filtration $(D_{\pfrak,i})$ of $\boldD_{\mathrm{st}}(V_{\pfrak})$.
\begin{align}\label{filt} 
D_{\pfrak,i} = \left\{ \begin{array}{cc}
 0 & i=-2, \\
 (1-p^{-1}\varphi)D_{\pfrak} + N(D_{\pfrak}^{\varphi=1}) & i=-1,\\
 D_{\pfrak} & i=0,\\
D_{\pfrak} + {\boldD_{\mathrm{st}}(V_{\pfrak})}^{\varphi=1}\cap N^{-1}(D_{\pfrak}^{\varphi=p^{-1}}) & i=1,\\
 \boldD_{\mathrm{st}}(V_{\pfrak}) & i=2.
\end{array}
\right.
\end{align}
We have that $D_{\pfrak,1}/D_{\pfrak,-1}$ coincides with the eigenvectors of $\varphi$ on $\boldD_{\mathrm{st}}(V_{\pfrak})$ of eigenvalue $1$ resp.  $p^{-1}$ and which are in the kernel of $N$ resp. in the image of  $N$. \\
This filtration induces a filtration on $\boldD^{\dagger}_{\mathrm{rig}}(V_{\pfrak})$. Namely, we pose 
\begin{align*}
F_{i} \boldD^{\dagger}_{\mathrm{rig}}(V_{\pfrak}) = \boldD^{\dagger}_{\mathrm{rig}}(V_{\pfrak}) \cap (D_{\pfrak,i} \otimes \rr_{F_{\pfrak,\log}}[t^{-1}]).
\end{align*}
We define 
\begin{align*}
W_{\pfrak} := F_1 \boldD^{\dagger}_{\mathrm{rig}}(V_{\pfrak})/ F_{-1} \boldD^{\dagger}_{\mathrm{rig}}(V_{\pfrak}).
\end{align*}
We have
\begin{align*}
W_{\pfrak} = W_{\pfrak,0}\bigoplus W_{\pfrak,1} \bigoplus M_{\pfrak}
\end{align*}
where $t_{\pfrak,0}=\mathrm{dim}_{E} H^{0}(W)= \mathrm{rank}_{\rr_{F_{\pfrak}}} W_0 $, $t_{\pfrak,1}=\mathrm{dim}_{E} H^{0}(W^*(1))= \mathrm{rank}_{\rr_{F_{\pfrak}}} W_1$ and $M$ sits in a non split sequence 
\begin{align*}
0 \rightarrow M_{\pfrak,0} \stackrel{f}{\rightarrow} M_{\pfrak} \stackrel{g}{\rightarrow} M_{\pfrak,1} \rightarrow 0
\end{align*}
such that $\mathrm{gr}^0 (\boldD^{\dagger}_{\mathrm{rig}}(V_{\pfrak})) = W_{\pfrak,0} \oplus M_{\pfrak,0}$ and $\mathrm{gr}^1 (\boldD^{\dagger}_{\mathrm{rig}}(V_{\pfrak})) = W_{\pfrak,1} \oplus M_{\pfrak,1}$.\\
We can prove exactly in the same way as \cite[Proposition 2.1.7 (i)]{BenLinv} that {\bfseries{C4}} implies  $\mathrm{rank}_{\rr_{F_{\pfrak}}} M_1=\mathrm{rank}_{\rr_{F_{\pfrak}}} M_0$.\\
In order to define the $\Ll$-invariant  we shall follow verbatim Benois' construction.
For sake of notation, we write $\boldD^{\dagger}_{\pfrak}$ for $\boldD^{\dagger}_{\mathrm{rig}}(V_{\pfrak})$. We obtain from \cite[Proposition 1.4.4 (i)]{BenLinv} 
\begin{align*}
H^{1}_{\mathrm{f}}(\mathrm{gr}^2(\boldD^{\dagger}_{\pfrak}))= H^0(\mathrm{gr}^2(\boldD^{\dagger}_{\pfrak}))=0.
\end{align*}
We deduce the following isomorphism
\begin{align}\label{h1fMV}
H^{1}_{\mathrm{f}}(F_1 \boldD_{\pfrak}^{\dagger}) = H^{1}_{\mathrm{f}}(\boldD^{\dagger}_{\pfrak}) = H^{1}_{\mathrm{f}}(F_{\pfrak},V).
\end{align}
As the Hodge-Tate weights of $F_{-1}\boldD_{\pfrak}^{\dagger}$ are $<0$, we obtain from \cite[Proposition 1.5.3 (i)]{BenLinv} and Poiteau-Tate duality $H^2(F_{-1}\boldD_{\pfrak}^{\dagger})=0$. 
Using the long exact sequence associated to 
\begin{align*}
0\rightarrow F_{-1}\boldD_{\pfrak}^{\dagger} \rightarrow F_{1}\boldD_{\pfrak}^{\dagger} \rightarrow W_{\pfrak} \rightarrow 0
\end{align*}
we see that 
\begin{align*}
\frac{H^{1}(W_{\pfrak})}{H^1_{\mathrm{f}}(W_{\pfrak})}=  \frac{H^1(F_{-1}\boldD_{\pfrak}^{\dagger})}{H^{1}_{\mathrm{f}}(F_{\pfrak},V)}.
\end{align*}
We suppose now 
\begin{itemize}
\item[\bfseries{C5})] $W_{\pfrak,0}=0$ for all $\pfrak \mid p$.
\end{itemize}
Write $\mathrm{gr}^1 (\boldD^{\dagger}_{\pfrak})=\oplus_{i=1}^{t_{\pfrak,1} + r_{\pfrak}} \rr_{F_{\pfrak}}(\prod_{\tau_{\pfrak}} \tau_{\pfrak}(z)^{m_{i,\tau_{\pfrak}}})$. 
We define the $2( t_{\pfrak,1} + r_{\pfrak})$-dimensional subspace  obtained as the image of \begin{align}\label{defindp}
\mathrm{Ind}_{\pfrak} := \mathrm{Im}\left(H^{1}\left(\oplus_{i=1}^{t_{\pfrak,1} + r_{\pfrak}} \rr_{\Q_p}\left(z^{\sum_{\tau_{\pfrak}} m_{i,\tau_{\pfrak}}}\right)\right)\right) \subset H^{1}(\mathrm{gr}^1 (\boldD^{\dagger}_{\pfrak})).
\end{align}
We define 
\begin{align*}
T_{\pfrak} = (H^1(F_{1}\boldD_{\pfrak}^{\dagger}) \cap \mathrm{Ind}_{\pfrak})/H^{1}_{\mathrm{f}}(F_{\pfrak},V) . 
\end{align*}
It has dimension $t_{\pfrak,1} + r_{\pfrak}$.\\
Write $t=\sum_{\pfrak} t_{\pfrak,1} + r_{\pfrak}$. We have a unique $t$-dimensional subspace $H^1(D,V)$ of $H^1(G_{F,S},V)$ projecting via \ref{PT} to $\oplus_{\pfrak} T_{\pfrak}$. 
We have an isomorphism \cite[Proposition 1.5.9]{BenLinv}
\begin{align*}
H^1(\oplus_{i=1}^{t_{\pfrak,1} + r_{\pfrak}} \rr_{\Q_p}(z^{\sum_{\tau_{\pfrak}} m_{i,\tau_{\pfrak}}})) = \mathcal{D}_{\mathrm{cris}}(W_1\oplus M_1) \oplus \mathcal{D}_{\mathrm{cris}}(W_1\oplus M_1)
\end{align*}
We shall denote the two projections by $\iota_{\mathrm{f}}$ and $\iota_{\mathrm{c}}$.\\
 A canonical basis is given by the above mentioned cocycles $x_m$ (resp. $y_m$) defined in (resp. right after) Proposition \ref{basef}.\\
By abuse of notation, we still denote by $\iota_{\mathrm{f}}$ resp. $\iota_{\mathrm{c}}$ be the projection of $H^{1}(D,V)$ to $\mathcal{D}_{\mathrm{cris}}(W)$ via $\iota_{\mathrm{f}}$ resp. $\iota_{\mathrm{c}}$. By the remark after Lemma \ref{surjc} and the definition of $T_{\pfrak}$, we have that $H^{1}(D,V)$  surjects into  $\mathcal{D}_{\mathrm{cris}}$ via $\rho_{\mathrm{c}}$. \\
Summing up, we can give the following definition;
\begin{defin}\label{Linvar}
The $\Ll$-invariant of the pair $(V,D)$ is  
\begin{align*}
\Ll(D,V):= \mathrm{det}(\iota_{\mathrm{f}} \circ \iota^{-1}_{\mathrm{c}}),
\end{align*}
where the determinant is calculated w.r.t. the basis ${(x_{m_i},y_{m_j})}_{1\leq i,j \leq t }$.
\end{defin}
\begin{rem}
There is no {\it a priori} reason for which $\Ll(D,V)$ should be non-zero.
\end{rem}
In the case $W_{\pfrak}=M_{\pfrak}$ we see from the description of $H^1(F_{1}\boldD_{\pfrak}^{\dagger})$ that the space $T_{\pfrak}$ depends only on $V_{\vert_{F_{\pfrak}}}$ exactly as in the classical case.

%%%%%%%%%%%%%%%%%%%%%%%%%%%%%%%%%%%%%%%%%%%%%%%%%%%%%%%%%%%%%%%%%%%%%%%%%%%%%%%%%%%%%%%%%%%%%%%%%%%%%%%%%%%

\subsection{Comparison with Benois' definition}

Fix a global field $F$ and let $\lgr \pfrak \rgr$ be the set of primes above $p$. \\
Let $G_p$ denote a fixed decomposition group at $p$ in $G_{\Q}$ and let $\pfrak_0$ be the corresponding place of $F$. Let $G_{\pfrak_0,F}$ be the decomposition group at $\pfrak_0$ in $G_F$. For each other place $\pfrak$ above $p$ in $F$, we have $G_{\pfrak}=\sigma_{\pfrak}G_{p} \sigma_{\pfrak}^{-1}$. We shall denote by $G_{\pfrak,F}$ the corresponding decomposition group in $G_F$. Consider a $p$-adic Galois representation 
\begin{align*}
V : G_F \rightarrow \mathrm{GL}_n(E).
\end{align*}
We shall suppose $E$ big enough to contain the Galois closure of $F_{\pfrak}$, for all $\pfrak$. As before, we suppose $V$ semistable at all primes above $p$. We have then 
\begin{align*}
\mathrm{Ind}_F^{\Q}(V) \cong_{G_p} \bigoplus_{\pfrak} \sigma_{\pfrak}^{-1} \mathrm{Ind}_{G_{\pfrak,F}}^{G_{\pfrak}} V_{\vert_{G_{\pfrak,F}}}
\end{align*}
%We go back to the notation of the previous section and we denote by 
%\begin{align*} D^{\dagger}=&  \boldD^{\dagger}_{\mathrm{rig}}\left(\mathrm{Ind}_{G_L}^{G_K}(V)\right).
%\end{align*}
where $\sigma_{\pfrak} \in G_p \setminus \mathrm{Hom}(F,\Qb)$.\\
%Suppose now that $\mathrm{Ind}_F^{\Q}(V)$ is a semistable ${G_p}$-representation. This implies in particular that $F$ is unramified at $p$. %To ease notation we shall suppose that there is only one prime above $p$. The general case can be easily dealt with using Shapiro's lemma.\\
Consider the $(\varphi,\G)$-module
\begin{align*} 
\boldD^{\dagger}:=\boldD^{\dagger}_{\mathrm{rig}}\left(\mathrm{Ind}_F^{\Q} V\right).
\end{align*}
We let $D$ be the regular $(\varphi,N)$-module of $\mathcal{D}_{\st}(\boldD^{\dagger}) $ induced by $\lgr D_{\pfrak}\rgr_{\pfrak}$.
%We  studing the filtration of $\mathcal{D}_{\st}(\mathrm{Ind}_{F_{\pfrak}}^{\Q_p}(V_{\vert_{F_{\pfrak}}}))$. We recall 
%\begin{align*}
%\mathcal{D}_{\st}(\mathrm{Ind}_K^{\Q_p}(D)) = &  {\left(D\otimes_{\Q_p} \rr_{\Q_p,\log}[1/t]\right)}^{\G_{\Q_p}}.
%\end{align*} 
%Hence we are simply forgetting the $\rr_{F_{\pfrak}}$-structure.\\
%It is clear that $\lgr D_{\pfrak} \rgr$ induces a unique regular submodule $D$ of $\mathcal{D}_{\st}({\mathrm{Ind}_F^{\Q}(V)}_{\vert_{\Q_p}})$.\\
As before we have a filtration $(F_i \boldD^{\dagger})$ on $\boldD^{\dagger}$ induced by the filtration on $D$. We denote by $W$ the quotient $F_1 \boldD^{\dagger}/ F_{-1} \boldD^{\dagger}$. Note that it is semistable. We write $W=W_0\oplus M\oplus W_1$. We suppose {\bfseries{C1-C5}} of the previous section. 
\begin{lemma}
Let $M$ be as in (\ref{caseM}). We have 
\begin{align*}
0 \rightarrow \mathrm{Ind}(M_0) \rightarrow \mathrm{Ind}(M) \rightarrow \mathrm{Ind}(M_1)\rightarrow 0.
\end{align*}
\end{lemma}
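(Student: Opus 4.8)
The plan is to show that induction $\mathrm{Ind}=\mathrm{Ind}_{G_{\pfrak,F}}^{G_\pfrak}$ (equivalently, on the level of $(\varphi,\G)$-modules, $\mathrm{Ind}_{\G_L}^{\G_{\Q_p}}$) is an exact functor, and then apply it to the defining non-split short exact sequence $0\to M_0\to M\to M_1\to 0$ of (\ref{caseM}). Exactness is essentially formal: induction from an open subgroup is both left and right adjoint to restriction (Shapiro/Frobenius reciprocity), hence exact; on the side of $(\varphi,\G)$-modules over the Robba ring the functor $\mathrm{Ind}_{\G_L}^{\G_K}(D)=\{f:\G_K\to D\mid f(hg)=hf(g)\}$ is, after the rank computation recalled in Section \ref{reminder} (it turns a rank $m$ module over $\rr_L$ into a rank $[L:K]m/|\G_K/\G_L|$ module over $\rr_K$, with $\rr_L$ free of that rank over $\rr_K$), nothing but $\rr_K\text{-linear}$ extension of scalars composed with the obvious coinduction, so it preserves injections, surjections and the zero map. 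Concretely, I would simply remark that for $L/K$ a finite extension of $p$-adic fields and any short exact sequence $0\to D_0\to D\to D_1\to 0$ of $(\varphi,\G_L)$-modules, applying $\mathrm{Ind}_{\G_L}^{\G_K}(-)$ yields a short exact sequence of $(\varphi,\G_K)$-modules, because $\mathrm{Ind}_{\G_L}^{\G_K}$ is exact (being the composition of the exact functor $-\otimes_{\rr_L}\rr_K'$ type base change with a finite product), compatibly with the $\varphi$- and $\G_K$-actions; this is already used implicitly in the excerpt, e.g. in the discussion around (\ref{injinduction}).

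Next, I would specialize: here $M_0=\oplus_{i=1}^r\rr_L(\delta_i)$ and $M_1=\oplus_{i=1}^r\rr_L(\delta_i')$ are explicit sums of rank-one modules, and $\mathrm{Ind}$ commutes with finite direct sums, so $\mathrm{Ind}(M_0)=\oplus_i\mathrm{Ind}(\rr_L(\delta_i))$ and likewise for $M_1$; the explicit description of $\mathrm{Ind}_{\G_L}^{\G_K}(\rr_L(\delta))$ as the $\rr_L$-span of the $f_i$, $f_i(g)=\omega^i(g)\delta(\cycar(g))e_\delta$, recalled just before (\ref{injinduction}), shows these are honest $(\varphi,\G_{\Q_p})$-modules of the expected rank. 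The maps $f$ and $g$ of (\ref{caseM}) induce $\mathrm{Ind}(f):\mathrm{Ind}(M_0)\to\mathrm{Ind}(M)$ and $\mathrm{Ind}(g):\mathrm{Ind}(M)\to\mathrm{Ind}(M_1)$ by post-composition, and exactness of $\mathrm{Ind}$ gives that $\mathrm{Ind}(f)$ is injective, $\mathrm{Ind}(g)$ is surjective, and $\ker\mathrm{Ind}(g)=\mathrm{im}\,\mathrm{Ind}(f)$. This is precisely the asserted
\begin{align*}
0\rightarrow\mathrm{Ind}(M_0)\rightarrow\mathrm{Ind}(M)\rightarrow\mathrm{Ind}(M_1)\rightarrow 0.
\end{align*}

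There is essentially no obstacle of substance; the only thing worth double-checking is that $\mathrm{Ind}$ really is exact in the category of $(\varphi,\G)$-modules over the Robba ring (as opposed to merely exact on underlying modules), i.e. that the $\varphi$- and $\G_K$-equivariant structure on $\mathrm{Ind}(D)$ is functorial in $D$ and that a $(\varphi,\G_L)$-submodule induces up to a $(\varphi,\G_K)$-submodule with the expected quotient. This is immediate from the definition — the $\G_K$-action on $\mathrm{Ind}(D)$ is by right translation on functions and the $\varphi$-action is applied valuewise, both of which commute with post-composition by a morphism of $(\varphi,\G_L)$-modules — and it is exactly the content of \cite[Theorem 2.2]{LiuCD} together with the Shapiro-lemma compatibility cited above. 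If one wants to be even more economical, one can alternatively deduce the statement directly from Shapiro's lemma applied to the long exact cohomology sequence, but the functorial/exactness argument is cleaner and is all that is needed here.
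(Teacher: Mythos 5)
Your argument is correct: since $\G_L$ has finite index in $\G_{\Q_p}$, the underlying module of $\mathrm{Ind}(D)$ is a finite direct sum of copies of $D$ indexed by coset representatives, the $\varphi$- and $\G_{\Q_p}$-structures are functorial, and all terms here are free (so saturation of the sub-object is automatic); applying this exact functor to the sequence (\ref{caseM}) yields exactly the displayed short exact sequence. The paper in fact gives no proof of this lemma at all --- it is stated as immediate --- and your exactness-of-induction argument is precisely the standard justification it leaves implicit.
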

%\begin{proof}

%We have by definition of induction that $\mathrm{Ind}(M_0) \hookrightarrow \mathrm{Ind}(M)$. We have a natural map $\mathrm{Ind}(M) \stackrel{\mathrm{Ind}(g)}{\rightarrow} \mathrm{Ind}(M)$. If $f$ is sent to $0$, it is immediate to see that $f$ has values in $M_0$. We obtain then 
%\begin{align*}
%\mathrm{rk}(\mathrm{Ker}(\mathrm{Ind}(g))) - \mathrm{rk}(\mathrm{Coker}(\mathrm{Ind}(g))) - \mathrm{rk}(\mathrm{Ind}(M_0)) =0
%\end{align*}
%which immediately implies that $\mathrm{Ind}(g)$ is surjective and we are done.
%\end{proof}
We can now compare our definition of $\Ll$-invariant with Benois'.
\begin{prop}
We have a commutative diagram
$$
\xymatrix{
H^{1}(G_{\Q,S},\mathrm{Ind}(V))\ar[d] & H^1(\mathrm{Ind}(D),\mathrm{Ind}(V)) \ar[l]\ar[r]^-{\mathrm{Res}_p} \ar[d]  & \frac{H^{1}(F_1\boldD^{\dagger}(\mathrm{Ind}(V)))}{H^1_{\mathrm{f}}(G_p,\mathrm{Ind}(V))}=  \frac{H^1(F_{-1}\boldD^{\dagger})}{H^{1}_{\mathrm{f}}(G_{p},\mathrm{Ind}(V))}.
\ar[d]^{\iota_p} \\
H^{1}(G_{F,S},V)      & H^{1}(D,V)\ar[l]\ar[r]^-{\oplus_{\pfrak}\mathrm{Res}_{\pfrak}}       & \prod_{\pfrak} T_{\pfrak} }
$$
whose vertical arrows are isomorphism.
\end{prop}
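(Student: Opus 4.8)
The plan is to construct the commutative diagram by assembling it from three pieces: the left square relating global cohomology, the right square relating local restriction maps, and the vertical arrows themselves, which are compatibility isomorphisms for induction. First I would recall from the previous section that $\boldD^{\dagger}(\mathrm{Ind}_F^{\Q}(V)) = \mathrm{Ind}_{\G_F}^{\G_{\Q}}(\boldD^{\dagger}_{\mathrm{rig}}(V))$, so that by Shapiro's lemma $H^i(G_{\Q_p},\mathrm{Ind}(V)) \cong \bigoplus_{\pfrak} H^i(G_{F_{\pfrak}},V_{\pfrak})$, and similarly for the global cohomology $H^1(G_{\Q,S},\mathrm{Ind}_F^{\Q}(V)) \cong H^1(G_{F,S},V)$. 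This is the leftmost vertical isomorphism. The point is that the filtration $F_{\bullet}\boldD^{\dagger}$ on $\boldD^{\dagger}(\mathrm{Ind}(V))$ induced by the regular submodule $D = \bigoplus_{\pfrak} D_{\pfrak}$ is, under the identification $\mathrm{Ind}_F^{\Q}(V) \cong_{G_p} \bigoplus_{\pfrak} \sigma_{\pfrak}^{-1}\mathrm{Ind}_{G_{\pfrak,F}}^{G_{\pfrak}} V_{\vert_{G_{\pfrak,F}}}$, precisely the direct sum of the inductions of the filtrations $F_{\bullet}\boldD^{\dagger}_{\pfrak}$; this is because induction is exact and commutes with the intersection operation defining $F_i$ (one checks $\mathrm{Ind}$ commutes with $\mathcal{D}_{\st}$ and with $[t^{-1}]$-localization). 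In particular $W(\mathrm{Ind}(V)) = \bigoplus_{\pfrak} \mathrm{Ind}(W_{\pfrak})$ and the decomposition $W = W_0 \oplus M \oplus W_1$ is compatible with the $\pfrak$-by-$\pfrak$ ones; here I would invoke the Lemma just stated (the exactness of $\mathrm{Ind}$ on the sequence $0 \to M_0 \to M \to M_1 \to 0$).

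Next I would treat the right square. The key is to identify $\mathrm{Ind}_{\pfrak} \subset H^1(\mathrm{gr}^1\boldD^{\dagger}_{\pfrak})$ defined in (\ref{defindp}) with the corresponding piece inside $H^1(\mathrm{gr}^1\boldD^{\dagger}(\mathrm{Ind}(V)))$: by the explicit description of induction of rank-one $(\varphi,\G)$-modules recalled in Section \ref{reminder}, together with the exact sequence (\ref{injinduction}) and the fact that the cocycles $x_m$ and $y_m$ are the images of $x_{\sum m_\tau}$ and $y_{\sum m_\tau}$, the subspace $\mathrm{Im}(H^1(\rr_{\Q_p}(z^{\sum m_{i,\tau_{\pfrak}}})))$ inside $H^1(\mathrm{gr}^1\boldD^{\dagger}_{\pfrak})$ matches the analogous subspace computed for $\mathrm{Ind}(V)$ over $\Q_p$. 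Therefore the map $\iota_p$ — which is simply the restriction–corestriction compatibility of Shapiro's isomorphism applied to $H^1(F_{-1}\boldD^{\dagger})/H^1_{\mathrm{f}}$ — carries $\mathrm{Ind}_p := \mathrm{Im}(H^1(\bigoplus_i \rr_{\Q_p}(z^{\sum m_{i}})))$ for $\mathrm{Ind}(V)$ onto $\bigoplus_{\pfrak}\mathrm{Ind}_{\pfrak}$, and hence carries $T_p := (H^1(F_1\boldD^{\dagger}) \cap \mathrm{Ind}_p)/H^1_{\mathrm{f}}$ onto $\bigoplus_{\pfrak} T_{\pfrak}$ (using $H^1_{\mathrm{f}}(G_p,\mathrm{Ind}(V)) = \bigoplus_{\pfrak} H^1_{\mathrm{f}}(F_{\pfrak},V_{\pfrak})$ from (\ref{h1fMV})). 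That it is an isomorphism (not just a map) follows since $\iota_p$ is induced by a genuine isomorphism of $(\varphi,\G)$-modules and it respects the two subspaces whose quotient defines $T$.

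Finally, the middle vertical arrow $H^1(\mathrm{Ind}(D),\mathrm{Ind}(V)) \to H^1(D,V)$ is then \emph{forced}: by definition $H^1(\mathrm{Ind}(D),\mathrm{Ind}(V))$ is the unique $t$-dimensional subspace of $H^1(G_{\Q,S},\mathrm{Ind}(V))$ projecting via the Poitou–Tate isomorphism (\ref{PT}) to $T_p$, and $H^1(D,V)$ is the unique one projecting to $\bigoplus_{\pfrak} T_{\pfrak}$. Since the leftmost isomorphism identifies the two Poitou–Tate decompositions and $\iota_p$ identifies $T_p$ with $\bigoplus_{\pfrak} T_{\pfrak}$, the leftmost isomorphism restricts to an isomorphism of these two subspaces, and both squares commute by naturality of restriction and of the Poitou–Tate sequence under Shapiro's lemma. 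I expect the main obstacle to be the bookkeeping in the right square: one must verify carefully that the combinatorial description of $\mathrm{Ind}_{\G_F}^{\G_{\Q}}(\rr_{F_{\pfrak}}(\delta))$ in terms of the $f_i$ and the twist $z^{\sum m_\tau}$ is compatible, under the various isomorphisms, with Benois' canonical basis $(x_m, y_m)$ — i.e. that no spurious scalar (in particular the factor $\tfrac{1}{e_L}\log\cycar(\gamma_L)$ hidden in $y_m$, or the identification $E \otimes_{\Q_p} L \cong \bigoplus_\tau E$) is introduced when passing between the two pictures. Everything else is formal exactness and uniqueness.
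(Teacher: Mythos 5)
Your proposal is correct and follows essentially the same route as the paper: Shapiro's lemma for the local and global cohomology, compatibility of the filtration and of the distinguished subspaces $\mathrm{Ind}_{\pfrak}$ with induction via the sequence (\ref{injinduction}), identification of the cocycles $x_m,y_m$ with $x_{\sum m_\tau},y_{\sum m_\tau}$, and a dimension/uniqueness argument to pin down the middle vertical arrow. The only cosmetic difference is that you assert the filtration on $\boldD^{\dagger}(\mathrm{Ind}(V))$ equals the direct sum of the induced filtrations, whereas the paper contents itself with the injections $F_1\boldD^{\dagger}(\mathrm{Ind}(V))\hookrightarrow \oplus_{\pfrak}\mathrm{Ind}(F_1\boldD^{\dagger}_{\pfrak})$ and $\mathrm{gr}^1(\boldD^{\dagger}_{\mathrm{rig}}(\mathrm{Ind}V))\hookrightarrow \oplus_{\pfrak}\mathrm{Ind}(\mathrm{gr}^1\boldD^{\dagger}_{\pfrak})$ and then concludes by dimension counting.
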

\begin{proof}
We follow \cite[\S 3.4.4]{HIwa}. Recall that we wrote  $\boldD^{\dagger}_{\pfrak}$ for $\boldD^{\dagger}_{\mathrm{rig}}(V_{\pfrak})$. Shapiro's lemma tells us that 
\begin{align*}
\frac{H^1(G_p,\mathrm{Ind}_F^{\Q}V)}{H^1_{\mathrm{f}}(G_p,\mathrm{Ind}_F^{\Q}V)} \stackrel{\iota_p}{\cong} \bigoplus_{\pfrak} \frac{H^1(\boldD^{\dagger}_{\pfrak})}{H^1_{\mathrm{f}}(\boldD^{\dagger}_{\pfrak})}.
\end{align*}
We are left to show that $H^{1}(F_1\boldD^{\dagger}(\mathrm{Ind}(V)))$ is sent by $\mathrm{Res}_{\pfrak}$ into $(H^1(F_{1}\boldD_{\pfrak}^{\dagger}) \cap \mathrm{Inv}_{\pfrak})$ and we shall conclude by dimension counting.\\
We have then an injection 
\begin{align*}
F_1\boldD^{\dagger}(\mathrm{Ind}(V)) \hookrightarrow \oplus_{\pfrak} \mathrm{Ind}(F_1 (\boldD^{\dagger}_{\mathrm{rig}}(V_{\pfrak}))).
\end{align*}
Then clearly the image of $\iota_p$ lands in $H^1(F_{1}\boldD_{\pfrak}^{\dagger})$. But we have also the injection
\begin{align*}
\mathrm{gr}^1 (\boldD^{\dagger}_{\mathrm{rig}}(\mathrm{Ind}V)) \hookrightarrow \oplus_{\pfrak} \mathrm{Ind}(\mathrm{gr}^1 (\boldD^{\dagger}_{\mathrm{rig}}(V_{\pfrak})))
\end{align*}
induced by (\ref{injinduction}). Then the image of $\iota_p$ lands in $\mathrm{Inv}_{\pfrak}$ and we are done.
\end{proof}
\begin{prop}
We have $\Ll(V)=\Ll(\mathrm{Ind}_F^{\Q}(V))$.
\end{prop}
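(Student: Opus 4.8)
The plan is to deduce the identity formally from the commutative diagram of the preceding proposition, once one observes that the two ingredients entering Definition \ref{Linvar} --- the projections $\iota_{\mathrm{f}},\iota_{\mathrm{c}}$ and the canonical basis $(x_{m_i},y_{m_j})$ --- are preserved by the induction/Shapiro isomorphism. Recall that $\Ll(V)=\det(\iota_{\mathrm{f}}\circ\iota_{\mathrm{c}}^{-1})$ on $H^1(D,V)$, while Benois' $\Ll(\mathrm{Ind}_F^{\Q}(V))$ is the determinant of the analogous composite $\iota_{\mathrm{f}}^{\mathrm{Ind}}\circ(\iota_{\mathrm{c}}^{\mathrm{Ind}})^{-1}$ on $H^1(\mathrm{Ind}(D),\mathrm{Ind}(V))$, each computed in the respective canonical basis. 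So it is enough to produce an isomorphism $H^1(\mathrm{Ind}(D),\mathrm{Ind}(V))\xrightarrow{\ \sim\ }H^1(D,V)$ intertwining $\iota_{\mathrm{f}}^{\mathrm{Ind}}$ with $\iota_{\mathrm{f}}$, $\iota_{\mathrm{c}}^{\mathrm{Ind}}$ with $\iota_{\mathrm{c}}$, and sending one canonical basis to the other; then the two matrices are conjugate in matching bases and have equal determinant.

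For the isomorphism I would simply take the middle vertical arrow of the diagram in the previous proposition, already shown there to be an isomorphism compatible with restriction to the decomposition groups above $p$: on the right-hand column it is Shapiro's isomorphism $\iota_p$ followed by $H^1(F_1\boldD^{\dagger}(\mathrm{Ind}(V)))/H^1_{\mathrm{f}}\cong\prod_{\pfrak}T_{\pfrak}$. Both $\iota_{\mathrm{f}}$ and $\iota_{\mathrm{c}}$ factor through the target of $\mathrm{Res}_p$, since by (\ref{defindp}) they arise from the splitting $H^1\!\left(\oplus_i\rr_{\Q_p}(z^{\sum_{\tau_{\pfrak}}m_{i,\tau_{\pfrak}}})\right)\cong\mathcal{D}_{\mathrm{cris}}(W_1\oplus M_1)\oplus\mathcal{D}_{\mathrm{cris}}(W_1\oplus M_1)$. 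The decisive point is that the rank-one $(\varphi,\G_{\Q_p})$-modules $\rr_{\Q_p}(z^{\sum_{\tau_{\pfrak}}m_{i,\tau_{\pfrak}}})$ occurring here are literally the same on both sides: by (\ref{injinduction}) the character $z^{\sum_{\tau_{\pfrak}}m_{i,\tau_{\pfrak}}}$ is the restriction to $\Q_p^{\times}$ of the character cutting out the corresponding graded piece of $\boldD^{\dagger}_{\pfrak}$, and the embedding $\mathrm{gr}^1(\boldD^{\dagger}_{\mathrm{rig}}(\mathrm{Ind}(V)))\hookrightarrow\oplus_{\pfrak}\mathrm{Ind}(\mathrm{gr}^1(\boldD^{\dagger}_{\pfrak}))$ of the previous proof identifies, under $\iota_p$, the $\mathrm{Ind}$-side subspace with this common space. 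Hence $\iota_{\mathrm{f}}^{\mathrm{Ind}}$ and $\iota_{\mathrm{c}}^{\mathrm{Ind}}$ correspond to $\oplus_{\pfrak}\iota_{\mathrm{f},\pfrak}$ and $\oplus_{\pfrak}\iota_{\mathrm{c},\pfrak}$, which reassemble to $\iota_{\mathrm{f}}$ and $\iota_{\mathrm{c}}$ on $H^1(D,V)$.

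For the bases, I would invoke the explicit comparison of cocycles recorded just after the long exact sequence attached to (\ref{injinduction}): under $\rr_{\Q_p}(z^{\sum_{\tau}m_{\tau}})\hookrightarrow\mathrm{Ind}_L^{\Q_p}(\rr_L(\delta))$ one has $x_{\sum_{\tau}m_{\tau}}\mapsto x_m$ and $y_{\sum_{\tau}m_{\tau}}\mapsto y_m$, the latter using the normalization $\gamma_L=\gamma_{\Q_p}^{e_L}$ together with the factor $\tfrac{1}{e_L}\log(\cycar(\gamma_L))$ built into the definition of $y$. Applying this graded piece by graded piece and prime by prime, the basis $(x_{m_i},y_{m_j})$ used for $\Ll(V)$ is carried exactly to the one used for $\Ll(\mathrm{Ind}_F^{\Q}(V))$. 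Combining the two points, $\iota_{\mathrm{f}}\circ\iota_{\mathrm{c}}^{-1}$ and $\iota_{\mathrm{f}}^{\mathrm{Ind}}\circ(\iota_{\mathrm{c}}^{\mathrm{Ind}})^{-1}$ are represented by the same matrix, whence $\Ll(V)=\Ll(\mathrm{Ind}_F^{\Q}(V))$.

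The main obstacle is not a single hard step but the bookkeeping ensuring that induction does not scramble the dichotomy between the ``finite'' classes $x_m$ (the $\iota_{\mathrm{f}}$-direction, crystalline $/\ \mathrm{ord}_p$) and the ``non-finite'' classes $y_m$ (the $\iota_{\mathrm{c}}$-direction, cyclotomic $/\ \log_p$) upon restriction from $\G_{\Q_p}$ to $\G_{F_{\pfrak}}$; this is exactly why $y$ carries the factor $\tfrac{1}{e_L}\log(\cycar(\gamma_L))$. Concretely one should double-check the vanishing $H^0(D')=0$ in (\ref{injinduction}), so that no class is lost in passing to the sub $\rr_{\Q_p}(z^{\sum_{\tau}m_{\tau}})$, and that $\mathcal{D}_{\mathrm{cris}}$ commutes with $\mathrm{Ind}_{\G_{F_{\pfrak}}}^{\G_{\Q_p}}$ in the way needed, so that the two splittings of $H^1$ into $\mathcal{D}_{\mathrm{cris}}\oplus\mathcal{D}_{\mathrm{cris}}$ are genuinely identified. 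Granting these compatibilities, the statement follows by a formal chase through the square of the preceding proposition.
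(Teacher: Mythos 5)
Your proposal is correct and follows essentially the same route as the paper: it combines the isomorphisms from the commutative diagram of the preceding proposition with the identification, recorded after the exact sequence (\ref{injinduction}), of the canonical cocycles $x_{\sum_{\tau} m_{\tau}}\mapsto x_m$ and $y_{\sum_{\tau} m_{\tau}}\mapsto y_m$, so that $\iota_{\mathrm{f}}\circ\iota_{\mathrm{c}}^{-1}$ is computed by the same matrix on both sides. The paper's proof is just a terser statement of exactly this argument, so your added bookkeeping is a faithful (and more explicit) version of it.
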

\begin{proof}
After the previous proposition, what we have to do is to notice  that the cocycle $x_{\sum m_{\tau}}$ (resp. $y_{\sum m_{\tau}}$) is identified with $x_{m}$ (resp. $y_m$).
\end{proof}
%In the unramified situation the factor $\frac{1}{d_{\pfrak}}$ compensates the two different forms of $\mathcal{E}^*(V,D)$ and $\mathcal{E}^*(\mathrm{Ind}_F^{\Q}(V),\mathrm{Ind}_F^{\Q}(D))$. 

%%%%%%%%%%%%%%%%%%%%%%%%%%%%%%%%%%%%%%%%%%%%%%%%%%%%%%%%
\section{Siegel-Hilbert modular forms, the local case}\label{SieHib}
The calculation of the $\Ll$-invariant requires to produce explicit cocycles in $H^1(D,V)$; when $V$ appears in $\mathrm{Ad}(V')$ for a certain representation $V'$ we can sometimes use the method of Mazur and Tilouine \cite{MT} to produce these cocycles. This has been done in many case for the symmetric square \cite{HLinv,MokLinv} and generalized to symmetric powers of the Galois representation associated with Hilbert modular forms in \cite{HTate2,HarJo}. The main limit of this approach is that for most of the representations $V$ is this computationally heavy to make it appear as the quotient of an adjoint representations.\\
In the case $\boldD^{\dagger}_{\mathrm{rig}}(V)=W=M$ the situation is way simpler; if $t=1$ it has been proved in \cite{BenLinv2} that to produce the cocycle in $H^1(V,D)$ it is enough to find deformations of $V\vert_{\Q_p}$.\\
We shall generalized the construction of Benois to our situation in the case $W_{\pfrak}=M_{\pfrak}$ and $r_{\pfrak}=1$. This will allow us to give a complete formula for the $\Ll$-invariant of the Galois representations associated with a Siegel-Hilbert modular form which is Steinberg at all primes above $p$. %We thank A. Jorza for pointing out that the calculation of the $\Ll$-invariant in this case is currently within reach.

\subsection{\texorpdfstring{The case $t_{\pfrak}=r_{\pfrak}=1$}{The case t=r=1}}
We suppose now that $W_{\pfrak}=M_{\pfrak}$ and $r_{\pfrak}=1$. For sake of notation, in this section we shall drop the index $\phantom{er}_{\pfrak}$.\\
All that we have to do is to check that the calculation of \cite[Theorem 2]{BenLinv} works in our setting.\\
We write as before 
\begin{align*}
0 \rightarrow M_0 \rightarrow M \rightarrow M_1 \rightarrow 0
\end{align*}
and, only in this subsection, we shall write $\delta$ for the character defining $M_0$ and $\psi$ for the character defining $M_1$. We have $\delta(z)=|\mathrm{N}_{L/\Q_p}(z)|_p\prod_{\tau}\tau(z)^{k_\tau}$ with $k_{\tau} \geq 1$ and $\psi(z)=\prod_{\tau}\tau(z)^{m_\tau}$ with $m_{\tau} \leq 0$.
We consider an infinitesimal deformation 
\begin{align*}
0 \rightarrow M_{0,A} \rightarrow M_A \rightarrow M_{1,A} \rightarrow 0,
\end{align*} over $A=E[T]/(T^2)$.  We suppose that $M_{0,A}$ (resp. $M_{1,A}$) is an infinitesimal deformation of $M_0$ (resp. $M_1$).\\ 
We shall write $\delta_A$ and $\psi_A$ for the corresponding one-dimensional character.\\
\begin{theo}\label{LinvM}
Suppose that ${\textup{d} \log(\delta_A\psi_A^{-1})(\cycar(\gamma_{\Q_p})))}_{\vert_{T=0}}  \neq 0$; then 
\begin{align*}
\Ll(M,M_0) = - {\log(\cycar(\gamma_{\Q_p}))}\frac{{\textup{d} \log(\delta_A\psi_A^{-1})(p)}_{\vert_{T=0}}}{{\textup{d} \log(\delta_A\psi_A^{-1})(\cycar(\gamma_{\Q_p}))}_{\vert_{T=0}}}
\end{align*}
\end{theo}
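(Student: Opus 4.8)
The plan is to transcribe the cocycle computation of \cite[Theorem 2]{BenLinv} (which follows the strategy of \cite{SSS} and \cite{BenLinv2}), the one point requiring real care being that, since $L\neq\Q_p$, the subspace $\mathrm{Ind}$ is now properly contained in $H^{1}(M_1)$, so one must check that the cocycle produced from the deformation still lands in it. As a first step I reduce to a two-coordinate computation. Because $W=M$ and $r=1$, Lemma \ref{surjc} together with Proposition \ref{Mstruct} show that $T:=(H^{1}(M)\cap\mathrm{Ind})/H^{1}_{\mathrm{f}}(M)$ is one-dimensional and that $\iota_{\mathrm{c}}$ identifies it with $\mathcal{D}_{\cris}(M_1)$; after identifying the latter with $E$ through the fixed basis, $\Ll(M,M_0)=\iota_{\mathrm{f}}(v)/\iota_{\mathrm{c}}(v)$ for any generator $v$ of $T$. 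Hence it suffices to produce one class $c\in H^{1}(M)$ with $g_1(c)\in\mathrm{Ind}\setminus H^{1}_{\mathrm{f}}(M_1)$ and to compute the coefficients $\alpha,\beta$ in $g_1(c)=\alpha\,x_m+\beta\,y_m$, for then $\Ll(M,M_0)=\alpha/\beta$.

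To build $c$ I use the deformation. Fix a basis $e_1$ of $M_0$ and a lift $e_2$ of a basis of $M_1$, so that $\varphi e_2=\psi(\pi_L)e_2+a\,e_1$ and $\gamma e_2=\psi(\cycar(\gamma))e_2+b\,e_1$ with $(a,b)$ giving the class $[M]\in H^{1}(\rr_L(\delta\psi^{-1}))$. Over $A=E[T]/(T^{2})$ write $\delta_A=\delta(1+Tu)$ and $\psi_A=\psi(1+Tw)$ with $u,w\colon L^{\times}\to E$ homomorphisms, choose bases $e_{1,A},e_{2,A}$ adapted to the subobject $M_{0,A}$ (so $\varphi e_{1,A}=\delta_A(\pi_L)e_{1,A}$ and $\gamma e_{1,A}=\delta_A(\cycar(\gamma))e_{1,A}$), and set $e_{i,A}=e_i+T\xi_i$ with $\xi_i\in M$. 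Expanding the $\varphi$- and $\gamma$-relations for $e_{2,A}$ modulo $T^{2}$ produces explicit equations for $\xi_1,\xi_2$ and, exactly as in \cite{BenLinv2}, a cocycle $c\in H^{1}(M)$; written with respect to $e_\psi$, its image $g_1(c)\in H^{1}(M_1)$ is represented by a pair whose two entries are, up to a common normalisation, $(u-w)(\pi_L)\,t^m$ and $\big((u-w)(\cycar(\gamma))/\log(\cycar(\gamma))\big)\,t^m$.

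The heart of the proof --- and the one place where $L\neq\Q_p$ is not purely formal --- is to verify that these two entries are honest \emph{scalar} multiples of $t^m$, equivalently that $g_1(c)\in\mathrm{Ind}=\mathrm{span}(x_m,y_m)$. I expect this to follow from the Steinberg shape $M_0=\rr_L(|\mathrm{N}_{L/\Q_p}(z)|_p\prod_\tau\tau(z)^{k_\tau})$, $M_1=\rr_L(\prod_\tau\tau(z)^{m_\tau})$: then $\delta\psi^{-1}$ falls in the exceptional case of Proposition \ref{dimH^i}, $[M]\notin H^{1}_{\mathrm{f}}$ by \textbf{C4}, and, arguing as in the proof of Lemma \ref{surjc}, the extension forces the non-crystalline part of $g_1(c)$ to be governed only by the restriction of $\delta_A\psi_A^{-1}$ to $\Q_p^{\times}$ --- which is precisely the part of $H^{1}(\rr_L(\psi))$ cut out by $\mathrm{Ind}$. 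Moreover $g_1(c)\notin Ex_m=H^{1}_{\mathrm{f}}(M_1)$ exactly because, by hypothesis, ${\textup{d}\log(\delta_A\psi_A^{-1})(\cycar(\gamma_{\Q_p}))}_{\vert_{T=0}}=(u-w)(\cycar(\gamma_{\Q_p}))\neq0$. Thus the class of $c$ generates $T$.

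It then remains to read off $\alpha$ and $\beta$. Since $e_L=1$ when $p$ is unramified in $F$, we have $\gamma_L=\gamma_{\Q_p}$ and $y_m=\log(\cycar(\gamma_{\Q_p}))\,\mathrm{cl}(0,t^m)e_\psi$; unwinding the isomorphism $H^{1}(\rr_{\Q_p}(z^{\sum m_\tau}))\cong\mathcal{D}_{\cris}\oplus\mathcal{D}_{\cris}$ of \cite[Proposition 1.5.9]{BenLinv} gives $\iota_{\mathrm{f}}(c)=\alpha$ proportional to ${\textup{d}\log(\delta_A\psi_A^{-1})(p)}_{\vert_{T=0}}$ and $\iota_{\mathrm{c}}(c)=\beta$ proportional to ${\textup{d}\log(\delta_A\psi_A^{-1})(\cycar(\gamma_{\Q_p}))}_{\vert_{T=0}}/\log(\cycar(\gamma_{\Q_p}))$ with the same constant, whence the asserted formula after accounting for the sign coming from the $\mathrm{ord}_p$ versus $\log_p$ normalisation of $x_0,y_0$. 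The main obstacle is the third paragraph: producing the cocycle $c$ precisely and checking that $g_1(c)$ has scalar entries (i.e. lies in $\mathrm{Ind}$), together with the normalisation bookkeeping that fixes the sign and the factor $\log(\cycar(\gamma_{\Q_p}))$.
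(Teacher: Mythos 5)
Your opening reduction is fine and is in fact where the paper's proof also starts: one fixes a generator $v=a\,x_m+b\,y_m$ of $H^{1}(F_1\boldD^{\dagger})\cap\mathrm{Ind}$, so that $\Ll(M,M_0)=ab^{-1}$, and the whole problem is to compute the ratio $a:b$. The genuine gap is in your second and third paragraphs: an infinitesimal deformation $M_A$ of $M$ does not produce a class in $H^{1}(M)$. The ``expand the $\varphi$- and $\gamma$-relations of adapted bases mod $T^{2}$'' construction you sketch is the Mazur--Tilouine difference construction, and its natural output is a class in $H^{1}(M\otimes M^{\vee})$ (equivalently, the data of $\textup{d}\log\delta_A$, $\textup{d}\log\psi_A$ and the derivative of the extension class); there is no natural map from there to $H^{1}(M)$, and the only canonical way to extract an $H^{1}(M)$-class from $M_A$, namely the Bockstein of $0\to M\stackrel{T}{\to}M_A\to M\to 0$, vanishes in the relevant degree because $H^{0}(M)=0$ (Proposition \ref{Mstruct}). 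This is also not what \cite{BenLinv2} does, so ``exactly as in \cite{BenLinv2}'' cannot carry the step. Symptomatically, the coordinates you assign to $g_1(c)$ depend only on $\textup{d}\log(\delta_A\psi_A^{-1})$ and not on the extension class of $M$, whereas the line $\mathrm{Im}(g_1)\cap\mathrm{Ind}$ is an invariant of $M$ alone; the actual content of the theorem is that the \emph{existence} of $M_A$ as an extension of $M_{1,A}$ by $M_{0,A}$ lifting $M$ forces a linear relation between $a$, $b$ and the derivatives of the characters. You flag exactly this verification as ``the main obstacle''; it is not a technical check but the proof itself, and ``I expect this to follow from the Steinberg shape'' does not supply it.

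For comparison, the paper obtains the missing relation by an obstruction/duality argument rather than by constructing a cocycle: the deformations of the graded pieces give Bockstein maps $B^i_j\colon H^{i}(M_j)\to H^{i+1}(M_j)$, computed on the canonical bases in (\ref{B_1^0a})--(\ref{B_1^0b}); passing to the dual extension and using the cup-product duality behind Proposition \ref{dimH^i}, the anticommutative square ${B_1^1}^{*}\Delta_0^{*}=-\Delta_1^{*}{B_0^1}^{*}$ evaluated on $t^{1-k}$, together with \cite[Proposition 2.4]{BenLinv2} and \cite[(3.6)]{BenLinv2}, yields a linear relation between $a$ and $b$ whose coefficients are $\textup{d}\log_p(\delta_A\psi_A^{-1})(p)$ and $\log_p(\cycar(\gamma_{\Q_p}))^{-1}\,\textup{d}\log_p(\delta_A\psi_A^{-1})(\cycar(\gamma_{\Q_p}))$, whence the stated formula. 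Your normalisation remarks in the last paragraph (e.g.\ taking $e_L=1$) are harmless in the intended application but are not needed once the argument is run this way; the essential repair is to replace the purported construction of $c\in H^{1}(M)$ by the duality/Bockstein computation (or some equivalent argument showing that the existence of $M_A$ annihilates the correct linear functional on $\mathrm{span}(x_m,y_m)$).
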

\begin{proof}
Recall the definition of Ind in (\ref{defindp}). We have a vector $v=a x_m +by_m$ in $H^1(F_{1}\boldD^{\dagger}) \cap \mathrm{Ind}$. By definition $\Ll(M)=ab^{-1}$. %*First of all, note that $\Delta_1(v)=0$ by definition. We are left to calculate $\Delta_1(x_m)$ and $\Delta_1(y_m).$ \\
The extension $M_{j,A}$ provides us with connecting morphisms $B_j^i:H^i(M_j)\rightarrow H^{i+1}(M_j)$.
We have by definition 
 \begin{align}
B_1^0(t^{-m}e_m)=&\mathrm{cl}(\textup{d}\mathrm{log}(\delta_A)(\pi_L)t^{-m}e_m,\textup{d}\mathrm{log}(\delta_A)(\cycar(\gamma))t^{-m}e_m)\label{B_1^0a}\\ 
 = &\textup{d}\mathrm{log}(\delta_A)(\pi_L)x_m + \textup{d}\mathrm{log}(\delta_A)(\cycar(\gamma))y_m. \label{B_1^0b}
\end{align}
As in \cite[\S 3.2]{BenLinv2} we consider the dual extension 
\begin{align*}
0 \rightarrow M_1^*(\cycar) \rightarrow M^*(\cycar) \rightarrow M_1^*(\cycar) \rightarrow 0,
\end{align*}
and we shall denote with a $\phantom{e}^*$ the corresponding map in the long exact sequence of cohomology.\\
 We have hence $\mathrm{ker}(\Delta_1) \bot \mathrm{Im}(\Delta_0^*)$ under duality, and a map 
 \begin{align*}H^1(M_1^*) \rightarrow H^1(\rr_{\Q_p}(|z|z^{1-\sum_{\tau}m_{\tau}})).\end{align*}
 By duality again, we deduce that the image of $\Delta_0^*$ inside the target of the above arrow is \begin{align*}
 a \alpha_{1-\sum_{\tau}m_{\tau}} + b \beta_{1-\sum_{\tau}m_{\tau}},\end{align*} 
 where $\alpha_{1-\sum_{\tau}m_{\tau}} $ (resp. $\beta_{1-\sum_{\tau}m_{\tau}}$) is the dual of $x_{\sum_{\tau}m_{\tau}}$ (resp. $y_{\sum_{\tau}m_{\tau}}$). \\
 We consider now the map  \begin{align*}{B_1^1}^*:H^1(M_1^*(\cycar))\rightarrow H^{2}(M_1^*(\cycar)) = H^2(\rr_{\Q_p}(|z|z^{1-\sum_{\tau}m_{\tau}})) \cong E,
 \end{align*}
 where the identity is the dual of the identity induced by (\ref{injinduction}). \\
 We can use \cite[Proposition 2.4]{BenLinv2} to see that after the above identification of $H^2$ with $E$ we have
 \begin{align}
 {B_1^1}^* (\alpha_{1-\sum_{\tau}m_{\tau}})& = c {\log_p(\cycar(\gamma_{\Q_p}))}^{-1} {\textup{d} \log_p(\delta_A)(\cycar(\gamma_{\Q_p}))}_{\vert_{T=0}}, \label{B_1^1a}\\
 {B_1^1}^* (\beta_{1-\sum_{\tau}m_{\tau}}) & = c  {\textup{d} \log_p(\delta_A)(p)}_{\vert_{T=0}}, \label{B_1^1b}
 \end{align}
where $c\in E^{\times}$.
We consider the following anti-commutative diagram 
$$
\xymatrix{
H^{0}(M_0^*(\cycar))\ar[d]^{{B_0^1}^*} \ar[r]^{\Delta_0^*} & H^1(M_1^*(\cycar)) \ar[d]^{{B_1^1}^*}  &  \\
H^{1}(M_0^*(\cycar))  \ar[r]^{\Delta_1^*} &  H^2(M_1^*(\cycar))}
$$
which implies 
\begin{align*}
{B_1^1}^* \Delta_0^* = - \Delta_1^*{B_0^1}^*.
\end{align*}
We calculate this identity on $t^{1-k}$. Applying (\ref{B_1^0a}) and (\ref{B_1^0b}) to ${\psi_A^{-1}\cycar}_{\vert_{\Q_p^{\times}}} $, (\ref{B_1^0a}) and (\ref{B_1^0b}) to ${\delta_A^{-1}\cycar}_{\vert_{\Q_p^{\times}}}$ and using \cite[(3.6)]{BenLinv2} which says 
\begin{align*}
\Delta_1^*{B_0^1}^*(t^{1-k}) =c\left(b\log_p(\delta_A)(p)+ a \textup{d}\log_p(\delta_A)(\cycar(\gamma)) \right)
\end{align*} we get 
\begin{align*}
b^{-1}a  = & - { \log_p(\cycar(\gamma_{\Q_p}))}\frac{{\textup{d} \log_p(\delta_A\psi_A^{-1})(p)}_{\vert_{T=0}}}{{\textup{d} \log_(\delta_A\psi_A^{-1})(\cycar(\gamma_{\Q_p}))}_{\vert_{T=0}}}.
\end{align*}
\end{proof}
\begin{rem}
In particular, this theorem proves that this definition of $\Ll$-invariant is compatible with the Coleman or Fontaine-Mazur ones \cite{PottLinv,YCZhang}.
\end{rem}%%%%%%%%%%%%%%%%%%%%%%%%%%%%%%%%%%%%%%%%%%%%%%%%%%%%%%%%%%%%%%%%%%%%%%%%%%%%%%%%

\subsection{\texorpdfstring{Calculation of the $\Ll$-invariant for Steinberg forms}{Calculation of the L-invariant for Steinberg forms}}\label{EffCal}
 We fix a totally real field $F$. Let $I$ be the set of real embeddings. Fix two embeddings
 \begin{align*}
 \C_p  \hookleftarrow \Qb \hookrightarrow\C
 \end{align*}
 as before. We partition $I = \sqcup_{\pfrak} I_{\pfrak}$ according to the $p$-adic place which each embedding induces. We shall denote by $q_{\pfrak}$ the residual cardinality for each prime ideal $\pfrak$. We consider an irreducible representation $\pi$ of ${\mathrm{GSp}_{2g}}_{/F}$ algebraic of weight $k=(k_{\tau})_{\tau}$, where $(k_{\tau})=(k_{\tau,1},\ldots,k_{\tau,g};k_{0})$ ($k_0$ is a parallel weight for $\mathrm{Res}_F^{\Q}(\mathbb{G}_m)$). With $k_{\tau,1}\leq k_{\tau,2} \ldots \leq k_{\tau,g}$. If $k_{\tau,1}\geq g +1$, then the weight is cohomological. The cohomological weight of $\pi$ is then \begin{align*}
{(\mu_{\tau})}_{\tau}=(k_{\tau})_{\tau}-{(g+1,\ldots,g+1;0)}_{\tau}.
\end{align*}
For parallel weight $k$, we shall choose $k_{0} = gk$.\\
% to show the existence of the desired triangulated deformation we need to appeal to a result of Liu \cite{Liu} which requires the family to hava a Zariski dense set of points whose Galois representation is {\it very regular}.\\
 We describe now the conjectural Galois representation associated with $\pi$. We have a spin Galois representation $V_{\mathrm{spin}}$ (whose image is contained in $\mathrm{GL}_{2^g}$) and a standard Galois representation $V_{\mathrm{sta}}$ (whose image is contained in $\mathrm{GL}_{2g+1}$) given respectively by the spinoral and the standard representation of $\mathrm{GSpin}_{2g+1}= \phantom{e}^{L}\mathrm{GSp}_{2g}$. \\ 
  Thanks to the work of Scholze \cite{Scho} we dispose now of the standard Galois representation (see for example \cite[Theorem 18]{HarJo}). We also know the existence of the spin representation in many cases \cite{KretShin}.\\ %(for $F=\Q$ and $g=2$ its a results of Weissauer \cite{Weiss}).\\
%The standard one is insensible to the central weight $k_0={(k_{\tau,0})}_{\tau}$.\\ 
We recall now some expected properties of these Galois representations. Our main reference is \cite[\S 3.3]{HarJo}. We will make the following assumption on $\pi$ at $p$; 
\begin{center}
for each $\pfrak \mid p$ either $\pi_{\pfrak}$ is spherical or Steinberg.
\end{center} 
We explain what we mean by Steinberg. Consider the Satake parameters at $\pfrak$, normalized as in \cite[Corollary 3.2]{BS}, $(\alpha_{\pfrak,1},\ldots,\alpha_{\pfrak,g})$. %We suppose that $\alpha_i \alpha_{i-1} = q_{\pfrak}$.\\
We have the following theorem on Iwahori spherical representation of $\mathrm{GSp}_{2g}(F_{\pfrak})$ \cite[Theorem 7.9]{Tadic}
\begin{theo}
Let $\alpha_1,\ldots, \alpha_g, \alpha$ be $g +1$ character of $F_{\pfrak}^{\times}$. Let $B_{\mathrm{GSp}_{2g}}$ be the Borel subgroup of $\mathrm{Sp}_{2g}(F_{\pfrak})$. Then $\mathrm{Ind}^{\mathrm{GSp}_{2g}(F_{\pfrak})}_{B_{\mathrm{GSp}_{2g}}}(\alpha_1 \times \cdots \times \alpha_g \rtimes \alpha)$ is not irreducible if and only if one of the following conditions is satisfied:
\begin{itemize}
\item[i)] There exist at least three indexes $i$ such that $\alpha_i$ has exact order two and the $\alpha_i$'s are mutually distinct;
\item[ii)] There exists $i$ such that $\alpha_i = {\vert \NN(\phantom{c}) \vert_{\pfrak}}^{\pm 1}$;
\item[iii)] There exist $i$ and $j$ such that $\alpha_i={\vert \NN(\phantom{c}) \vert_{\pfrak}}^{\pm1}{\alpha_j}^{\pm1}$.
\end{itemize}
\end{theo}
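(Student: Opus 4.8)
This is a classical reducibility criterion for the (possibly degenerate) unramified principal series of the split group $G=\mathrm{GSp}_{2g}(F_{\pfrak})$, whose root system is of type $C_g$, so I will only sketch the strategy; the full argument is Tadi\'c's. Write $I(\chi)=\mathrm{Ind}_{B_{\mathrm{GSp}_{2g}}}^{G}(\chi)$ for $\chi=\alpha_1\times\cdots\times\alpha_g\rtimes\alpha$. The plan is to separate a ``rank one'' contribution, governed by the normalised rank-one intertwining operators $M_{s_\beta}(\chi)\colon I(\chi)\to I(s_\beta\chi)$ attached to the positive roots $\beta$ of $C_g$, from an ``$R$-group'' contribution that only appears when $\chi$ is not Weyl-regular. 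Concretely: if $\chi$ is regular, Rodier's theorem gives that $I(\chi)$ is irreducible if and only if $\chi\circ\beta^\vee\neq\vert\NN(\cdot)\vert_{\pfrak}^{\pm1}$ for every root $\beta$; if $\chi$ has non-trivial stabiliser $W_\chi\subset W(C_g)=\{\pm1\}^g\rtimes S_g$, reducibility is detected instead by the Knapp--Stein--Keys $R$-group $R_\chi\subset W_\chi$.

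For the rank-one part I would run through the two $W$-orbits of roots. For a short root $e_i\pm e_j$ the relevant rank-one Levi is a $\mathrm{GL}_2$-block, and one computes $\chi\circ(e_i\mp e_j)^\vee=\alpha_i\alpha_j^{\mp1}$, so $M_{s_\beta}(\chi)$ fails to be an isomorphism exactly when $\alpha_i\alpha_j^{-1}$ or $\alpha_i\alpha_j$ equals $\vert\NN(\cdot)\vert_{\pfrak}^{\pm1}$; this is precisely condition (iii). For a long root $2e_i$ the rank-one Levi is $\mathrm{GSp}_2\cong\mathrm{GL}_2$ (times a torus), with $\chi\circ(2e_i)^\vee=\alpha_i$, and its principal series is reducible exactly when $\alpha_i=\vert\NN(\cdot)\vert_{\pfrak}^{\pm1}$ — and, crucially, there is no ``quadratic character'' reducibility here, since $\mathrm{GL}_2$, unlike $\mathrm{SL}_2$, has irreducible unitary principal series; this gives condition (ii). By Rodier's theorem these failures already produce reducibility of the full $I(\chi)$ when $\chi$ is regular, and conversely when $\chi$ is regular and none of (ii),(iii) holds no hyperplane $\chi\circ\beta^\vee=\vert\NN(\cdot)\vert_{\pfrak}^{\pm1}$ is met, so $I(\chi)$ is irreducible.

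For the $R$-group part one analyses $\chi$ with $W_\chi\neq 1$. Its stabiliser is generated by the permutations exchanging equal $\alpha_i$'s together with the sign changes $e_i\mapsto-e_i$ at the indices where $\alpha_i^2=1$ (after normalising away the similitude character $\alpha$). The key point is that a single such sign change never survives in $R_\chi$ — it is absorbed by the similitude torus, which is exactly why one quadratic $\alpha_i$ does not force reducibility — and a careful computation of $R_\chi$ for the \emph{similitude} group (tracking the component group of the centre and the relation with the $\mathrm{Sp}_{2g}$-situation through Clifford theory) shows that $R_\chi$ is non-trivial precisely when at least three of the $\alpha_i$ are mutually distinct characters of exact order two; this is condition (i). Assembling the cases, and verifying that the list (i)--(iii) is exhaustive — through a Jacquet-module computation based on the geometric lemma, or Tadi\'c's $M^{*}$-formalism, with the base cases $g=1,2$ supplied by the known decomposition of the principal series of $\mathrm{GSp}_2$ and $\mathrm{GSp}_4$, and induction on $g$ — yields the ``if and only if''.

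The hard part is the non-regular case (condition (i)): the $R$-group of a principal series of a similitude group must be computed very carefully, because the larger centre of $\mathrm{GSp}_{2g}$ shifts the relevant threshold relative to $\mathrm{Sp}_{2g}$, and because it is ``exact order two'' — not merely order dividing two — that is needed. Checking that (i)--(iii) misses nothing is the other delicate point, and is where one genuinely needs, beyond the intertwining-operator bookkeeping, the explicit classification of the irreducible subquotients of the principal series of $\mathrm{GSp}_2$ and $\mathrm{GSp}_4$ to anchor the induction on $g$.
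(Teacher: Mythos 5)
The paper does not actually prove this statement: it is recalled verbatim as \cite[Theorem 7.9]{Tadic} and used as a black box in the definition of ``Steinberg at $\pfrak$'', so there is no internal argument to measure yours against. Judged on its own, your outline names the right toolkit --- rank-one intertwining operators and the regular-character criterion to produce (ii) and (iii), Knapp--Stein $R$-groups for the non-regular part, Jacquet-module/geometric-lemma bookkeeping for exhaustiveness --- and this is indeed the kind of machinery behind Tadi\'c's result. But as written it is a plan rather than a proof, and you say so yourself (``the full argument is Tadi\'c's''), so in the end your proposal amounts to the same appeal to the literature that the paper makes.

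The concrete gap is precisely at the point you flag as ``the hard part''. In $\mathrm{GSp}_{2g}$ the sign change $c_i$ acts on the inducing data by $\alpha_i \mapsto \alpha_i^{-1}$ \emph{and} $\alpha \mapsto \alpha\alpha_i$, so $c_i$ stabilizes $\alpha_1\times\cdots\times\alpha_g\rtimes\alpha$ only if $\alpha_i$ is trivial, a product $c_ic_j$ only if $\alpha_i=\alpha_j$ is quadratic, and the elements that can actually contribute to condition (i) are products such as $c_ic_jc_k$ with $\alpha_i,\alpha_j,\alpha_k$ of order two and $\alpha_i\alpha_j\alpha_k=1$; your sketch records that one sign change ``is absorbed by the similitude torus'' but never identifies which Weyl elements do survive in $W_\chi$ and in $R_\chi$, which is exactly where the threshold ``at least three mutually distinct characters of exact order two'' comes from (and where one must worry about residue characteristic $2$, since for $p$ odd three distinct quadratic characters automatically multiply to the trivial one, while for $p=2$ they need not). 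Likewise the converse direction --- that outside (i)--(iii) the full induced representation is irreducible, including at non-regular, non-unitary points --- is dispatched by ``induction on $g$ from $\mathrm{GSp}_2$ and $\mathrm{GSp}_4$'' with no induction step; this is where the bulk of Tadi\'c's work lies. So the approach is the right one, but the decisive computations are asserted, not carried out.
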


\begin{rem} 
As shown in \cite[Lemma 19]{HarJo}, such a points are contained in a proper subset of the Hecke eigenvariety for $\mathrm{GSp}_{2g}$.
\end{rem}

\begin{defin}\label{defstb}
We say that $\pi_{\pfrak}$ is Steinberg if $\alpha_i= \vert  \NN(\phantom{c}) \vert_{\pfrak}^{i-1}{\alpha_1}$. 
\end{defin} 
If $\pi_{\pfrak}$ is Steinberg at $p$, then $\alpha_{\pfrak,i}(\varpi_{\pfrak}) = q_{\pfrak}^i\alpha_{\pfrak,1}(\varpi_{\pfrak})$. \\
%In what follows we shall consider only $\pi$ which at $\pfrak$ are spherical or Steinberg.
Trivial zeros appears also for automorphic forms which are only partially Steinberg at $\pfrak$  and can be dealt exactly at the same way as the parallel one but for the sake of notation we prefer not to deal with them. \\

To each $g+1$ non-zero elements $(t_1,\ldots,t_g;t_0) \in {(A^{\times})}^{g+1}$ we associate the diagonal matrix \begin{align*}
u(t_1,\ldots,t_g;t_0):=(t_1,\ldots,t_g, t_0 t_g^{-1},\ldots, t_0t_1^{-1})
\end{align*} of $\mathrm{GSp}_{2g}(A)$. \\
For $1 \leq i \leq g-1$ we denote by $u_{\pfrak,i}$ the diagonal matrix associated with $(1,\ldots,1,\varpi_{\pfrak}^{-1},\ldots,\varpi_{\pfrak}^{-1};\varpi_{\pfrak}^{-2})$, where $\varpi_{\pfrak}$ appears $i$ times; we also denote by $u_{\pfrak,0}$ the diagonal matrix corresponding to $(1,\ldots,1;\varpi_{\pfrak}^{-1})$. 
\begin{defin}\label{Upi}
The Hecke operators $U_{\pfrak,i}$, for $1 \leq i \leq g$ are defined as the double coset operator $[\mathrm{Iw}u_{\pfrak,g-i} \mathrm{Iw}]$. \\
%We define $U_{\pfrak}:= [\mathrm{Iw}u((1,\ldots,1;\varpi_{\pfrak}))\mathrm{Iw}]$. 
\end{defin}

We have that $U_{\pfrak,g}$ is the ``classical'' $U_p$ operator \cite[\S 0]{BS}. 
% We define $\alpha=\alpha_0$ and $\beta=\alpha_1 \alpha_0$.\\
%If $\pi_{\pfrak}$ is Steinberg then Arthur's conjecture on the local transfer from $\mathrm{GSp}_{2g}$ to $\mathrm{GL}_{2g+1}$ implies that $\pi_{\pfrak}$ is the Steinberg representation of $\mathrm{GL}_{2g+1}$. \\
We shall say then that $\pi$ is of finite slope for $U_{\pfrak,g}$ if $U_{\pfrak,g}$ has eigenvalue $\alpha_{\pfrak,0} \neq 0$ on $\pi_{\pfrak}$.\\

We are interested to study the possible $p$-stabilization of $\pi$ ({\it i.e.} Iwahori fixed vectors). If  $\pi_{\pfrak}$ is unramified at $\pfrak$, we have then $2^g g!$ choices (see \cite[Lemma 16]{HarJo} or \cite[Proposition 9.1]{BS}). If $\pi_{\pfrak}$ is Steinberg, we have instead only one possible choice, as the monodromy $N$ has maximal rank.\\

Suppose that we can lift $\pi$ to an automorphic representation $\pi^{(2^g)}$ of $\mathrm{GL}_{2^g}$. We suppose also that we can lift $\pi$ to an automorphic representation $\pi^{(2g+1)}$ of $\mathrm{GL}_{2g+1}$.\\
Let $V=V_{\mathrm{spin}}$ (resp. $V_{\mathrm{sta}}$) be the Galois representation associated with $\pi^{(2^g)}$ (resp. $\pi^{(2g+1)}$). We make the following assumption
\begin{itemize}
\item[\bfseries{LGp})] $V$ is semistable at all $\pfrak \mid p$ and strong local-global compatibility at $l=p$ holds.
\end{itemize}
These hypotheses are conjectured to be always true for $f$ as above. Arthur's transfer from $\mathrm{GSp}_{2g}$ to $\mathrm{GL}_{2g+1}$ has been proven in \cite{BinXu} (note that it is now unconditional \cite{WaldX}) and for $V=V_{\mathrm{sta}}$ this hypothesis is then verified thanks to \cite[Theorem 1.1]{Cara}. These hypotheses are also satisfied in many cases for $V=V_{\mathrm{spin}}$ in genus $2$ (see \cite{AsgSha,PiSaSc}). \\ 
 Roughly speaking, we require that
\begin{align*}
\mathrm{WD}(V_{\vert F_{\pfrak}})^{\mathrm{ss}} \cong \iota_{n}^{-1} \pi^{(n)}_{\pfrak},
\end{align*}
where $\mathrm{WD}(V_{\vert F_{\pfrak}})$ is the Weil-Deligne representation associated with  $V_{\vert F_{\pfrak}}$ \`a la Berger, $\pi^{(n)}_{\pfrak}$ is the component at $\pfrak$ of $\pi^{(n)}$,  and $\iota_n$ is the local Langlands correspondence for $\mathrm{GL}_n(F_{\pfrak})$ geometrically normalized ($n=2g+1$ when $V$ is the standard representation and $n=2^g$ when $V$ is the spinorial representation).\\
When $\pi_{\pfrak}$ is an irreducible quotient of $\mathrm{Ind}_B^{\mathrm{GSp}_{2g}}(\alpha_{\pfrak,1}\otimes \cdots \otimes \alpha_{\pfrak,g})$ we have that the Frobenius eigenvalues on $\mathrm{WD}({V_{\mathrm{spin}}}_{\vert F_{\pfrak}})^{\mathrm{ss}}$ are the $2^g$ numbers
\begin{align*}
\left(\alpha_{\pfrak,0}  \prod_{ \begin{array}{c}
0 \leq r \leq g \\ 
1 \leq i_1 < \ldots < i_r \leq g 
\end{array}} \alpha_{\pfrak,i_1}(\varpi_{\pfrak}) \cdots \alpha_{\pfrak,i_r}(\varpi_{\pfrak}) \right).
\end{align*}
%where for all $r$ we have $1 \leq i_1 < \cdots < i_r \leq g$.\\
The ones on $\mathrm{WD}({V_{\mathrm{sta}}}_{\vert F_{\pfrak}})^{\mathrm{ss}}$ are
\begin{align*}
 \left( \alpha_{\pfrak,g}^{-1}(\varpi_{\pfrak}),\ldots,  \alpha_{\pfrak,1}^{-1}(\varpi_{\pfrak}), 1,  \alpha_{\pfrak,1}(\varpi_{\pfrak}), \ldots,  \alpha_{\pfrak,g}(\varpi_{\pfrak}) \right).
 \end{align*} 
Moreover, the monodromy operator should have maximal rank (i.e. one-dimensional kernel) if we are Steinberg or be trivial otherwise. (This is also a consequence of the weight-monodromy conjecture for $V$.)\\
%If $\pi^{(n)}$ is square-integrable at another finite place other than $p$, then $\pi^{(n)}$ is the image via the Jacquet-Langlands correspondence of an automorphic representation on a unitary group and we can then use Caraiani's theorem \cite{Cara} to obtain semistability and local-global compatibility.\\
%Also, we can transfer $\pi^{(n)}$ $\mathrm{Sp}_{2g}$ to $\mathrm{Gl}_{2g+1}$. Arthur's conjecture ensure us that at $p$ the transfer is still Steinberg. 
Let $\pfrak$ be a $p$-adic place of $V$ and let $\tau$ be a complex place in $I_{\pfrak}$. The Hodge-Tate weights of ${V_{\mathrm{spin}}}_{\vert_{F_{\pfrak}}}$ at $\tau$ are then 
\begin{align*}
{\left(\frac{k_0}{2}+\frac{1}{2}\sum_{i=1}^g \eps(i)(k_{\tau,i}-i) \right)}_{\eps},
\end{align*} where $\eps$ ranges among the $2^g$ maps from $\lgr 1,\ldots, g\rgr$ to $ \lgr \pm 1 \rgr$. \\
The one of ${V_{\mathrm{sta}}}_{\vert_{F_{\pfrak}}}$ are $(1-k_{\tau,g}, \ldots, g-k_{\tau,1}, 0,k_{\tau,1}-g,\ldots, k_{\tau,g}-1)$. \\
Thanks to work of Tilouine-Urban \cite{TU}, Urban \cite{UrbEig},  Andreatta-Iovita-Pilloni \cite{AIP} we have families of Siegel modular forms;
\begin{theo}\label{teoFami}
Let $\W=\mathrm{Hom}_{\mathrm{cont}}\left(\Z_p^{\times} \times {({(\oo_F \otimes_{\Z} \Z_p)}^{\times})}^{g},\C_p^{\times}\right)$ be the weight space. There exist an affinoid neighborhood $\U$ of $\kappa_0= \left((z,{(z_i)}_{i=1}^g) \mapsto z^{k_0} \prod_{\tau \in I}\prod_i \tau(z_i)^{k_{\tau,i}} \right)$ in $\W$, an equidimensional rigid variety $\X=\X_{\pi}$ of dimension $dg+1$, a finite surjective map $ w : \X  \rightarrow U$,  a character $ \Theta : \h^{Np} \rightarrow  \oo(\X)$, and a point  $x$ in $\X$ above $\underline{k}$ such that $x \circ \Theta$ corresponds to the Hecke eigensystem of $\pi$. \\
Moreover, there exists a dense set of points $x$ of $\X$ coming from classical cuspidal Siegel-Hilbert automorphic forms of weight $(k_{\tau,i};k_0)$ which are regular  and spherical at $p$.
\end{theo}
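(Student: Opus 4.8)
The statement assembles several now-standard constructions, so the plan is to quote the appropriate machinery and to check that it applies in this setting. First I would record that $\W$ is equidimensional of dimension $dg+1$, where $d=[F:\Q]=|I|$: since $p$ is unramified in $F$ one has $\oo_F\otimes_{\Z}\Z_p=\prod_{\pfrak\mid p}W(\F_{q_{\pfrak}})$, so $(\oo_F\otimes_{\Z}\Z_p)^{\times}$ has $\Z_p$-rank $\sum_{\pfrak\mid p}[F_{\pfrak}:\Q_p]=d$, whence $\mathrm{Hom}_{\mathrm{cont}}(((\oo_F\otimes_{\Z}\Z_p)^{\times})^{g},\C_p^{\times})$ has dimension $dg$ and the central variable $\Z_p^{\times}$ contributes one more. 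I would then run the eigenvariety construction for $\mathrm{Res}_{F/\Q}\mathrm{GSp}_{2g}$, either via the overconvergent cohomology of \cite{UrbEig} or via the overconvergent Siegel--Hilbert modular forms of \cite{TU} in the Andreatta--Iovita--Pilloni formalism \cite{AIP}. One obtains a reduced rigid space equipped with a weight map and a Hecke character $\Theta$ interpolating the prime-to-$Np$ eigensystems of finite-slope cuspidal overconvergent eigenforms; the underlying spectral variety is, locally on $\W$, finite and flat, so this space is equidimensional of dimension $\dim\W=dg+1$ and each of its irreducible components dominates $\W$. Fixing a slope bound $h$ larger than the slope of a $p$-refinement of $\pi$ and a connected affinoid neighbourhood $\U$ of $\kappa_0$, and letting $\X=\X_{\pi}$ be the slope-$\leq h$ part over $\U$, one obtains the required finite surjective $w:\X\to\U$.

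To obtain the point $x$ I would fix an Iwahori $p$-stabilization of $\pi$: at each spherical $\pfrak$ one of the $2^{g}g!$ refinements, at each Steinberg $\pfrak$ the unique Iwahori-fixed line. This refinement is of finite slope for every $U_{\pfrak,i}$ — the Satake parameters being nonzero in the spherical case, and the Iwahori-fixed line of a Steinberg component carrying a nonzero $U_{\pfrak,g}$-eigenvalue $\alpha_{\pfrak,0}$ (a power of $q_{\pfrak}$ up to the central character) — and, since $k_{\tau,1}\geq g+1$ for all $\tau$, the weight $\underline{k}$ is cohomological. Hence the classical cuspidal Siegel--Hilbert eigenform attached to $\pi$ and this refinement is, by construction, a point of $\X$, giving $x$ with $w(x)=\kappa_0$ and $x\circ\Theta$ equal to the prime-to-$Np$ Hecke eigensystem of $\pi$.

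For the density assertion I would combine the standard Zariski-density of small-slope classical points — again a consequence of the control/classicality theorem: the dominant regular weights lying far from the walls relative to a fixed slope bound are Zariski-dense in $\W$, and above such weights all non-critical-slope points of $\X$ are classical — with the observation that the two ``bad'' loci are proper analytic subsets of the reduced equidimensional space $\X$. The non-regular locus is contained in the $w$-preimage of the walls of $\W$ (weights with $k_{\tau,i}=k_{\tau,i+1}$ for some $\tau,i$), which is closed and of smaller dimension. The locus where some $\pi_{\pfrak}$ is non-spherical — i.e.\ where Tadi\'c's reducibility criterion (\cite[Theorem 7.9]{Tadic}) is met — is, by the Remark following that theorem together with \cite[Lemma 19]{HarJo}, cut out in $\X$ by the vanishing of finitely many explicit functions in the $U_{\pfrak,i}$-eigenvalues and the weight coordinates, hence is again a proper closed analytic subset. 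A Zariski-dense open subset of the classical locus therefore avoids both, which is the asserted dense set of classical cuspidal points that are regular and spherical at $p$.

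The only step that is not a direct transcription of the cited constructions is this last one: showing that the classical non-critical-slope locus is not contained in the union of the ``non-regular'' and ``non-spherical-at-$p$'' loci. For regularity this is immediate from the combinatorics of $\W$; for sphericity it rests on \cite[Lemma 19]{HarJo}, to the effect that the non-spherical-at-$p$ points already form a proper subvariety of the eigenvariety, so that the generic classical point of every component of $\X$ is regular and spherical at $p$. I expect this to be the main obstacle; the rest is bookkeeping with the machinery of \cite{UrbEig}, \cite{TU}, \cite{AIP}.
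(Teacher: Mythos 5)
Your proposal is correct and follows essentially the same route as the paper, which states Theorem \ref{teoFami} as a direct consequence of the eigenvariety constructions of \cite{TU}, \cite{UrbEig}, \cite{AIP} without further argument; your extra bookkeeping (dimension of $\W$, choice of slope bound and affinoid $\U$, classicality for regular weights, and the use of \cite[Lemma 19]{HarJo} to exclude the non-spherical locus) matches the paper's remark following Tadi\'c's theorem. Nothing in your write-up deviates from, or adds a gap to, the intended citation-based proof.
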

\begin{rem}
Assuming Leopoldt conjecture, the multiplicative group appearing in the definition of $\W$  is, up to a finite subgroup, ${({(\oo_F \otimes_{\Z} \Z_p)}^{\times})}^{g+1}/\overline{\oo_F^{\times}}$ ({\it i.e.} the $\Z_p$-points of the torus of $\mathrm{Res}_F^{\Q}(\mathrm{GSp}_{2g})$ modulo the $\Z_p$-points of the center).
\end{rem}
This allows us to define two pseudo-representations $R_{?}:G_{\Q} \rightarrow \oo(\X)$, for $?=\mathrm{spin}$, $\mathrm{sta}$,  interpolating the trace of the representations associated with classical Siegel forms \cite[Proposition 7.5.4]{BelCh}. 
Suppose now that $V_?$ is absolutely irreducible (this is conjectured to hold when $\pi$ is Steinberg at least at one prime);  we have then, shrinking $\U$ around $\underline{k}$ if necessary, a {\it big} Galois representation $\rho_{\mathrm{?}}$ with value in $\mathrm{GL}_{n}(\oo(\X))$ such that $\mathrm{Tr}(\rho_?)=R_{?}$ \cite[page 214]{BelCh}. \\

For $1 \leq j < g$ we define $\lambda_{\pfrak}(u_{\pfrak,g-j})=\varpi_{\pfrak}^{\sum_{\tau \in I_{\pfrak}} k_{\tau,1}+ \cdots + k_{\tau,j} - k_0}$ and $\lambda_{\pfrak}(u_{\pfrak,0})=\varpi_{\pfrak}^{\sum_{\tau \in I_{\pfrak}}(k_{\tau,1}+ \cdots + k_{\tau,g} - k_0)/2}$. We have analytic functions $\beta_{\pfrak,j}:=\Theta(U_{\pfrak,j}{\vert \lambda_{\pfrak}(u_{\pfrak,g-j}) \vert_p}) \in  \oo(\X) $. 
We proceed now as in \cite{HarJo}. We recall the following theorem \cite[Theorem 0.3.4]{Liu}; 
\begin{theo}\label{TeoLiu}
Let $\rho:G_{F_{\pfrak}}\rightarrow \mathrm{GL}_n(\oo(\X))$ be a continuous representation. Suppose that there exist $\kappa_1(x),\ldots, \kappa_n(x)$ in $F_{\pfrak}\otimes_{\Q_p}\oo(\X)$, $F_1(x),\ldots,F_d(x)$ in $\oo(\X)$, and a Zariski dense set of points $Z \subset \X$ such that
\begin{itemize}
\item for any $x$ in $\X$, the Hodge-Tate weights of $\rho_x$ are $\kappa_1(x),\ldots, \kappa_n(x)$;
\item for any $z$ in $Z$, $\rho_z$ is crystalline;
\item for any $z$ in $Z$, $\kappa_{\tau,1}(z) < \ldots < \kappa_{\tau,n}(z)$, for all $\tau \in I_{\pfrak}$; 
\item for any $z$ in $Z$, the eigenvalues of $\varphi^{f_{\pfrak}}$ on $\mathcal{D}_{\mathrm{cris}}(V_z)$ are $\prod_{\tau \in I_{\pfrak}}\tau(\varpi_{\pfrak})^{\kappa_{\tau,1}(z)}F_1(z),\ldots,\prod_{\tau \in I_{\pfrak}}\tau(\varpi_{\pfrak})^{\kappa_{\tau,n}(z)}F_n(z)$;
\item for any $C$ in $\R$, defines $Z_C \subset Z$ as the set of points $z$ such that for all $I,J \subset \lgr 1, \ldots,n \rgr$ such that $|\sum_{i\in I} \kappa_{\tau,i}(z) - \sum_{j\in J} \kappa_{\tau,j}(z)|>C$ for all $\tau \in I_{\pfrak}$. We require that for  all $z \in Z$ and $C \in \R$, $Z_C$ accumulates  at  $z$.
\item for $1\leq i \leq n$ there exist $\chi_i : \oo_{F_{\pfrak}}^{\times } \rightarrow {\oo(\X)}^{\times}$  such that $\chi_i(u)=\prod_{\tau} \tau(u)^{\kappa_{\tau, i}(x)}$.
\end{itemize}
Then, for all $x$ in $\X$ non-critical and regular  ($\kappa_1(x) < \ldots < \kappa_n(x)$ and the eigenvalues of $\varphi$ on $\bigwedge^i \mathcal{D}_{\mathrm{cris}}(V_x)$ are distinct for all $i$) there exists a neighborhood $U$ of $x$ such that $\rho_U$ is trianguline and its graded pieces are $\rr_U(\chi_i)$.
\end{theo}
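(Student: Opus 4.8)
The plan is to descend from $\rho$ to its attached family of $(\varphi,\G)$-modules and to build the triangulation one graded piece at a time, by induction on $n$. By the theory of families of $(\varphi,\G)$-modules over a rigid base (Berger--Colmez, Kedlaya--Liu, Kedlaya--Pottharst--Xiao), $\rho$ gives rise, locally on $\X$, to a family $\boldD := \boldD^{\dagger}_{\mathrm{rig}}(\rho)$ of $(\varphi,\G_{F_{\pfrak}})$-modules over the relative Robba ring $\rr_{\X}$ with coefficients in $\oo(\X)$, whose fibre at $y \in \X$ is $\boldD^{\dagger}_{\mathrm{rig}}(\rho_y)$. It then suffices to produce, after shrinking $\X$ to an affinoid neighborhood $U$ of $x$, a filtration
\begin{align*}
0 = F_0 \subset F_1 \subset \cdots \subset F_n = \boldD_{\vert U}
\end{align*}
by saturated $(\varphi,\G_{F_{\pfrak}})$-submodules that are $\rr_{U}$-direct summands, with $\mathrm{gr}^i \boldD \cong \rr_{U}(\chi_i)$. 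The case $n=1$ follows from the classification of rank one families, by matching the Sen operator of $\rho_x$ with $\kappa_1(x)$ and the Frobenius eigenvalues on $Z$ with $F_1$.

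For the inductive step I would first produce the line $F_1 \cong \rr_{U}(\chi_1)$ inside $\boldD_{\vert U}$; after twisting by $\chi_1^{-1}$, this amounts to finding a nonzero $(\varphi,\G_{F_{\pfrak}})$-invariant global section of $\boldD(\chi_1^{-1})$. Using the $C_{\varphi,\gamma}$-complex of \cite[\S 1.1.5]{BenLinv} in families, the $\oo(\X)$-module $\mathcal{F} := H^{0}_{(\varphi,\G)}(\boldD(\chi_1^{-1}))$ is coherent, and it is a saturated subsheaf of $\boldD(\chi_1^{-1})$ over any integral piece. At a point $z$ in the set $Z_C$ for $C$ large --- which is Zariski dense by the accumulation hypothesis --- the largeness of the gaps between the Hodge--Tate weights $\kappa_{\tau,i}(z)$ forces the refinement of the crystalline module $\boldD^{\dagger}_{\mathrm{rig}}(\rho_z)$ attached to the labelled ordering of the $\varphi^{f_{\pfrak}}$-eigenvalues $\prod_{\tau}\tau(\varpi_{\pfrak})^{\kappa_{\tau,i}(z)}F_i(z)$ to be non-critical; hence $\boldD^{\dagger}_{\mathrm{rig}}(\rho_z)$ is trianguline with first graded piece $\rr_E(\chi_{1,z})$, so $\mathcal{F}$ is nonzero at $z$. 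Thus $\mathcal{F}$ has Zariski-dense support, hence (shrinking to an irreducible component through $x$) $\mathcal{F}_x \neq 0$, and Nakayama's lemma together with saturation give a morphism $s : \rr_{U}(\chi_1) \to \boldD_{\vert U}$ on a small enough $U$ whose fibre $s_x$ at $x$ is nonzero.

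Then I would check that $s$ is saturated, so that $\boldD' := \boldD_{\vert U}/\mathrm{im}(s)$ is again a family of $(\varphi,\G)$-modules. At $x$: were $s_x$ not saturated, its image would be $t^{k}$ times a saturated line for some $k \geq 1$, so $\boldD^{\dagger}_{\mathrm{rig}}(\rho_x)$ would contain a sub-line of Hodge--Tate weight $\kappa_1(x)$ shifted down by a positive amount; iterating and comparing with the weights $\kappa_i(x)$ contradicts the non-criticality and regularity of $x$. Hence $s_x$ is saturated, the fibre $\boldD^{\dagger}_{\mathrm{rig}}(\rho_x)/\mathrm{im}(s_x)$ is a rank $n-1$ $(\varphi,\G)$-module, and since flatness and the condition of being a family of $(\varphi,\G)$-modules are open on the base, after one further shrinking $\boldD'$ is a family of $(\varphi,\G)$-modules, free of rank $n-1$ over $\rr_{U}$. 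One then checks directly that $\boldD'$ satisfies the hypotheses of the theorem with data $(\chi_2,\ldots,\chi_n)$, weights $\kappa_2,\ldots,\kappa_n$, eigenvalue functions $F_2,\ldots,F_n$ and dense set $Z \cap U$, and that $x$ stays non-critical and regular for $\boldD'$ --- this is the bookkeeping of deleting the first entry of each list. The inductive hypothesis supplies the remaining graded pieces, and pulling them back along $\boldD_{\vert U} \twoheadrightarrow \boldD'$ yields the triangulation.

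The main obstacle is the passage from the Zariski-dense crystalline locus to the point $x$, which in the intended applications (Steinberg $\pi$) is not even crystalline: one must guarantee that the rank one sub-object survives at $x$ as a \emph{saturated} line and that every successive quotient $\boldD'$ stays flat. This is precisely why the hypothesis is the accumulation of the sets $Z_C$ defined by the gap conditions $|\sum_{i\in I}\kappa_{\tau,i}(z) - \sum_{j\in J}\kappa_{\tau,j}(z)| > C$ and not merely Zariski density of $Z$: the gaps keep the Hodge and Newton polygons of the refinement in general position, so non-criticality --- and with it saturatedness and flatness --- propagates to the limit. Setting up the coherence and base-change properties of $H^{0}_{(\varphi,\G)}$ in families and the openness of the $(\varphi,\G)$-module locus is the technical heart; the rest is manipulation of the parameters $\chi_i$, $\kappa_{\tau,i}$, $F_i$.
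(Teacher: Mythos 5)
This statement is not proved in the paper at all: it is quoted verbatim as Theorem 0.3.4 of R.~Liu's work on triangulations of refined families, so your proposal has to be judged against Liu's argument rather than anything in the text. Your overall architecture (pass to the family $\boldD^{\dagger}_{\mathrm{rig}}(\rho)$ over the relative Robba ring, produce a rank one sub, quotient and induct) is indeed the right skeleton, but the decisive step is asserted rather than proved. You define $\mathcal{F}=H^0(\boldD(\chi_1^{-1}))$, observe that for $z\in Z_C$ the fibre $\boldD^{\dagger}_{\mathrm{rig}}(\rho_z)(\chi_{1,z}^{-1})$ has nonzero $H^0$, and conclude that $\mathcal{F}$ has Zariski-dense support and hence nonzero fibre at $x$. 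This conflates the cohomology of the fibre with the fibre of the cohomology: for families of $(\varphi,\G)$-modules the formation of $H^0$ does not commute with specialization, the natural map $\mathcal{F}\otimes k(z)\rightarrow H^0(\boldD_z(\chi_{1,z}^{-1}))$ need not be surjective (it can be zero), so pointwise periods at the points of $Z_C$ do not by themselves produce a global section near $x$ --- in particular not at the point $x$ itself, which in the intended application is semistable non-crystalline. This propagation of crystalline periods from a Zariski-dense accumulating set to the whole family is precisely the technical heart of Kisin's method and of Liu's proof (carried out with the sheaves of Sen/de Rham periods and quantitative use of the gap condition defining $Z_C$, together with finiteness and base-change results for cohomology of $(\varphi,\G)$-module families); it cannot be replaced by a coherence-plus-density argument.

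A second, more structural point: your induction is formulated for Galois representations $\rho$, but after splitting off $\rr_U(\chi_1)$ the quotient $\boldD'$ is only a family of $(\varphi,\G)$-modules and in general is no longer \'etale, i.e.\ does not come from a family of representations, so the statement you induct on must first be generalized to (densely refined) families of $(\varphi,\G)$-modules --- this is how Liu sets it up. Your saturation argument at $x$ via non-criticality is the right idea, and the bookkeeping of passing to $(\chi_2,\ldots,\chi_n)$, $(\kappa_2,\ldots,\kappa_n)$, $(F_2,\ldots,F_n)$ is fine once the framework is corrected, but as written the proposal has a genuine gap exactly where the theorem is hard.
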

We can apply this theorem and show that the $(\varphi,\G)$-module associated with ${\rho_{\mathrm{?}}}_{\vert G_{\Q_p}}$ is trianguline. We explicit now the triangulation, given in \cite[\S 3.3]{HarJo}.\\

As seen before, a $p$-stabilization of $\pfrak$ corresponds to a permutation $\nu$ and a map $\eps$. %We shall write $\chi_i=\alpha_{\pfrak,\nu^{-1}(i)}^{\eps(i)}$. \\ 

The eigenvalues of $\varphi$ are given by 
\begin{align*}
\prod_{\tau \in I_{\pfrak}} {\tau(\varpi_{\pfrak})}^{c_1 +\mu_{\tau,1}} {\beta_{\pfrak,1}},\\
\prod_{\tau \in I_{\pfrak}} {\tau(\varpi_{\pfrak})}^{c_i +\mu_{\tau,i}}  \frac{\beta_{\pfrak,i-1}}{\beta_{\pfrak,i}},\\
\prod_{\tau \in I_{\pfrak}} {\tau(\varpi_{\pfrak})}^{c_g +\mu_{\tau,g}}  \frac{\beta_{\pfrak,g-1}}{\beta_{\pfrak,g}^2}
\end{align*}
where $c_i$'s are a positive integer independent of the weight.\\
We define the following characters of $F_{\pfrak}$ with value in $\oo(\X)$: 
\begin{align*}
\chi_{\pfrak,1}(\varpi_{\pfrak})& =  {\beta_{\pfrak,1}},\\
\chi_{\pfrak,i}(\varpi_{\pfrak})& =  \frac{\beta_{\pfrak,i-1}}{\beta_{\pfrak,i}},\\
\chi_{\pfrak,g}(\varpi_{\pfrak})& =  \frac{\beta_{\pfrak,g-1}}{\beta_{\pfrak,g}^2},
\end{align*}
and $\chi_{\pfrak,1}(u)=\prod_{\tau \in I_{\pfrak}} {\tau(u)}^{c_i +\mu_{\tau,i}}$.\\

From \cite[Lemma 19]{HarJo} we have that the graded pieces of $\boldD^{\dagger}_{\mathrm{rig}}({V_{\mathrm{sta}}}_{\vert_{\pfrak}})$ are then given by the characters $\chi_{\pfrak,g},\ldots, 1, \ldots, \chi_{\pfrak,g}^{-1}$.\\

Concerning $V_{\mathrm{spin}}$, we number the subsets of $\lgr 1,\ldots,g \rgr$ as $I_1,I_2,\ldots, I_{2^g}$. Each $I_j$ correspond to a map $\eps_j:\lgr 1,\ldots, g \rgr \rightarrow \pm 1$.\\% and we say that such a numbering is admissible if there exist a Zariski dense set of points $Z$ in $\X$ for which we have that $j < j'$ implies $\sum_{i \in I_j} \mu_i < \sum_{i' \in I_j'} \mu_{i'}$ (as classical points must be deRham).\\
We have then the graded pieces $\delta_{\pfrak,j}$ are given by the characters 
\begin{align*}
\delta_{\pfrak,\eps_j}(u)= & \prod_{\tau \in I_{\pfrak}} {\tau(u)}^{d_j +\frac{k_0 +\sum_i \eps_{j}(i)k_{\tau,i}}{2}},\\
\delta_{\pfrak,\eps_j}(\varpi_{\pfrak})=&  \beta_{\pfrak,g} \prod_{i \in I_j} {\chi_{\pfrak,i}}(\varpi_{\pfrak}).
\end{align*}
  
Let $V$ be either $V_{\mathrm{Sta}}$ or $V_{\mathrm{spin}}$.
If $\pi_{\pfrak}$ is Steinberg, there  is only one choice of a regular $(\varphi,N)$-sub-module $D_{\pfrak}$ of $\boldD_{\mathrm{st}}(V_{G_{F_{\pfrak}}})$, where $V$ is one of the two representations associated with $\pi$ described above. 
If the form is not Steinberg at $\pfrak$ many different regular sub-module can be chosen.\\ In any case, we expect (and we shall assume in the follow) that there is at most one trivial zero for each $\pfrak$.  %For example, if the point of the eigenvariety corresponding to $\pi$ is non-critical and regular (in the sense of \ref{TeoLiu}), all possible choices are permitted.
Consider now the representation $\pi$ of parallel weight $\underline{k}$ ({\it i.e.} associated with $\NN_{F/\Q} (\mathrm{det}^{\underline{k}})$, $\underline{k} \in \Z$) as in the introduction.\\
We give a preliminary proposition on the factorization of the $\Ll$-invariant. Recall the set $S^{\mathrm{Sph,1}}$ and $S^{\mathrm{Stb}}$ defined in the introduction, we have the following;
\begin{prop}
We have the following factorization 
\begin{align*}
\Ll(V,D)=\Ll(V,D)^{\mathrm{Sph}} \prod_{\pfrak \in S^{\mathrm{Stb}}} \Ll(V,D)_{\pfrak},
\end{align*}
where $\Ll(V,D)^{\mathrm{Sph}}$ comes from the prime in $S^{\mathrm{sph}}$ and the factors $\Ll(V,D)_{\pfrak}$ are local.
\end{prop}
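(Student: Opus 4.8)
The plan is to reduce the global $\Ll$-invariant to a product of local contributions by exploiting the block-diagonal structure of the local $(\varphi,\G)$-modules at the primes above $p$ together with the definition of $\Ll(D,V)$ as $\mathrm{det}(\iota_{\mathrm f}\circ\iota_{\mathrm c}^{-1})$ from Definition \ref{Linvar}. First I would recall that, by hypothesis, $\pi_{\pfrak}$ is either spherical or Steinberg at each $\pfrak\mid p$, so the primes split as $S^{\mathrm{Stb}}\sqcup S^{\mathrm{Sph}}$, and the filtration $(F_i\boldD^{\dagger}_{\mathrm{rig}}(V_{\pfrak}))$ of \eqref{filt} together with the decomposition $W_{\pfrak}=W_{\pfrak,0}\oplus W_{\pfrak,1}\oplus M_{\pfrak}$ respects this. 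Under {\bfseries C5} we have $W_{\pfrak,0}=0$; at a Steinberg prime one expects $W_{\pfrak}=M_{\pfrak}$ with $r_{\pfrak}=1$, while at a spherical prime in $S^{\mathrm{Sph},1}$ the eigenvalue $1$ contributes to $W_{\pfrak,1}$. The key structural input is that the space $T_{\pfrak}$ and the identification of $H^{1}$ with $\mathcal{D}_{\mathrm{cris}}(W_{\pfrak,1}\oplus M_{\pfrak,1})^{\oplus 2}$ are built prime-by-prime, so the global space $H^1(D,V)$, which by \eqref{PT} maps isomorphically onto $\bigoplus_{\pfrak}T_{\pfrak}$, decomposes compatibly.

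Next I would make the determinant computation explicit. The maps $\iota_{\mathrm f}$ and $\iota_{\mathrm c}$ are defined on $H^1(D,V)\cong\bigoplus_{\pfrak}T_{\pfrak}$, and since $T_{\pfrak}$ is cut out inside $H^1(\mathrm{gr}^1(\boldD^{\dagger}_{\pfrak}))$ using only the local data at $\pfrak$ — the cocycles $x_{m}$, $y_{m}$ of Proposition \ref{basef} and the remark following it, which live in the $\pfrak$-component — both $\iota_{\mathrm f}$ and $\iota_{\mathrm c}$ are block-diagonal with respect to the decomposition $\bigoplus_{\pfrak}T_{\pfrak}$ and the corresponding decomposition $\bigoplus_{\pfrak}\mathcal{D}_{\mathrm{cris}}(W_{\pfrak,1}\oplus M_{\pfrak,1})$. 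Hence $\iota_{\mathrm f}\circ\iota_{\mathrm c}^{-1}$ is block-diagonal, and its determinant is the product over $\pfrak\mid p$ of the determinants of the blocks. Collecting the $\pfrak\in S^{\mathrm{Stb}}$ blocks gives $\prod_{\pfrak\in S^{\mathrm{Stb}}}\Ll(V,D)_{\pfrak}$ with each factor local (each $\Ll(V,D)_{\pfrak}$ is exactly the quantity computed by Theorem \ref{LinvM} applied to $M_{\pfrak}$), and collecting the $\pfrak\in S^{\mathrm{Sph},1}$ blocks gives a single quantity that I would call $\Ll(V,D)^{\mathrm{Sph}}$; when $S^{\mathrm{Sph},1}=\emptyset$ this determinant is over the empty set and hence equals $1$.

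The one point requiring care — and what I expect to be the main obstacle — is to verify that the spherical contribution $\Ll(V,D)^{\mathrm{Sph}}$ genuinely splits off as a single factor and that the Steinberg factors are \emph{purely} local, i.e.\ that there is no cross term in the determinant mixing a Steinberg prime with a spherical prime or with another Steinberg prime. This is where the block-diagonality must be argued carefully: it relies on the fact that $\mathrm{Ind}_{\pfrak}$ in \eqref{defindp} is defined separately for each $\pfrak$, that $H^{1}(F_1\boldD^{\dagger}_{\pfrak})$ sits inside $H^{1}(\mathrm{gr}^{1}(\boldD^{\dagger}_{\pfrak}))$ componentwise (Proposition \ref{Mstruct} and Lemma \ref{surjc}), and that the basis $\{(x_{m_i},y_{m_j})\}$ used to compute the determinant is adapted to the decomposition indexed by $\pfrak$. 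For the Steinberg primes, $W_{\pfrak}=M_{\pfrak}$ and $r_{\pfrak}=1$ forces the block to be $2\times 2$ and its determinant to be the local $\Ll$-invariant of $(M_{\pfrak},M_{\pfrak,0})$, which by the remark at the end of Section \ref{defLinv} depends only on $V_{\vert_{F_{\pfrak}}}$; this gives the local factors $\Ll(V,D)_{\pfrak}$. The remaining spherical blocks, which \emph{a priori} involve global cohomology classes, are bundled into $\Ll(V,D)^{\mathrm{Sph}}$, and no further information about them is claimed here.
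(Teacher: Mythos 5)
There is a genuine gap, and it sits exactly at the point you flagged as ``the one point requiring care'': your claim that $\iota_{\mathrm f}$ and $\iota_{\mathrm c}$ are \emph{both} block-diagonal with respect to $\bigoplus_{\pfrak}T_{\pfrak}$ is unjustified, and for $\iota_{\mathrm f}$ it is false in general. The maps $\iota_{\mathrm f},\iota_{\mathrm c}$ are evaluated on global classes $c\in H^1(D,V)$ through their actual restrictions $\mathrm{res}_{\qfrak}(c)$, not through their images in $T_{\qfrak}$: if the $T_{\qfrak}$-component of $c$ vanishes, this only says $\mathrm{res}_{\qfrak}(c)\in H^1_{\mathrm f}(F_{\qfrak},V)$, i.e.\ the restriction is a (possibly nonzero) crystalline class. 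Such a class has zero $\iota_{\mathrm c}$-component (crystalline classes land in the span of the $x_m$'s), so $\iota_{\mathrm c}$ does respect the decomposition; but its $\iota_{\mathrm f}$-component need not vanish. The vanishing of that $\iota_{\mathrm f}$-component is the essential non-formal step, and it is available only at Steinberg primes: there one uses (\ref{h1fMV}), $H^1_{\mathrm f}(F_{\qfrak},V)=H^1_{\mathrm f}(M_{\qfrak})$, together with Proposition \ref{Mstruct}, which gives $H^1_{\mathrm f}(M_{\qfrak})=\mathrm{Im}(f_1)$, hence a crystalline restriction dies in $H^1(M_{\qfrak,1})$ and both $\iota_{\mathrm f,\qfrak}(c)$ and $\iota_{\mathrm c,\qfrak}(c)$ vanish. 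This argument uses the non-splitness of $M_{\qfrak}$ and breaks down at spherical primes, where $W_{\qfrak,1}$ is a direct summand and $H^1_{\mathrm f}(W_{\qfrak,1})\neq 0$, so a class supported (in the $T$-decomposition) away from $\qfrak$ can still have a nonzero $\iota_{\mathrm f,\qfrak}$-image.

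Consequently the correct conclusion is weaker than what you assert: $\iota_{\mathrm f}\circ\iota_{\mathrm c}^{-1}$ is only block-\emph{triangular} — it stabilizes the spherical block $\bigoplus_{\pfrak\in S^{\mathrm{Sph},1}}\mathcal{D}_{\mathrm{cris}}(\cdot)$, and on the quotient it is diagonal with respect to $\bigoplus_{\pfrak\in S^{\mathrm{Stb}}}$, each Steinberg block being the purely local map $\iota_{\mathrm f,\pfrak}\circ\iota_{\mathrm c,\pfrak}^{-1}$ on $T_{\pfrak}$ (local by the remark at the end of Section \ref{defLinv}). That triangularity is all that is needed for the stated factorization, and it is what the paper proves by the argument just sketched (choosing $c$ with $\mathrm{res}_{\pfrak}(c)=0$ in $T_{\pfrak}$ for $\pfrak\neq\pfrak_0$ and invoking Proposition \ref{Mstruct} ``which holds only for $\pfrak\in S^{\mathrm{Stb}}$''). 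Your full block-diagonality would in addition split $\Ll(V,D)^{\mathrm{Sph}}$ into local factors at the spherical primes, which is precisely what cannot be done — this factor is \emph{a priori} global, reflecting the Greenberg--Hida phenomenon that at crystalline primes with eigenvalue $1$ the relevant cohomology classes are genuinely global. So the decomposition you need must distinguish the two types of primes and supply the crystalline-vanishing input at the Steinberg ones; as written, your proof omits that step and proves too much.
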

\begin{proof}
We follow \cite[\S 1.3]{HTate2}. In the notation of Section \ref{SecLinv}, we write $W_1 = \oplus_{\pfrak \in S^{\mathrm{Stb}}} W_{\pfrak,1}$ and $M_1 = \oplus_{\pfrak \in S^{\mathrm{Sph},1}} M_{\pfrak,1}$. We are left to show that the endomorphism $\iota_{\mathrm{f}} \circ \iota^{-1}_{\mathrm{c}}$ of $\mathcal{D}_{\mathrm{cris}}(W_1\oplus M_1)\cong E^{t}$ keeps stable $\mathcal{D}_{\mathrm{cris}}(M_1)$ and on the quotient it respects the direct sum decomposition $\oplus_{\pfrak \in S^{\mathrm{Stb}}} \mathcal{D}_{\mathrm{cris}}(W_{\pfrak,1})$.\\
Consider a prime $\pfrak_0 \in S^{\mathrm{Stb}}$ and a cocycle $c \in H^1(V,D)$ such that $\mathrm{res}_{\pfrak}(c)=0$ for all $\pfrak \neq \pfrak_0$. This means that $\mathrm{res}_{\pfrak}(c) =0 \in H^1_{\mathrm{f}}(F_{\pfrak},V)=H^1_{\mathrm{f}}(F_{\pfrak},M_{\pfrak}) $ (by (\ref{h1fMV})). Hence by Proposition \ref{Mstruct} (which holds only for $\pfrak$ in $S^{\mathrm{Stb}}$) we have $\iota_{\mathrm{c},\pfrak}(c)= \iota_{\mathrm{f},\pfrak}(c) = 0$. \\
We have also $\iota_{\mathrm{c},\pfrak}(c)=0$ for all primes $\pfrak \neq \pfrak_0$ as $H^1_{\mathrm{c}}$ is the direct sum complement of $H^1_{\mathrm{f}}$ (see \cite[Proposition 1.5.9]{BenLinv}). \\
The proposition then follows from standard linear algebra as in \cite[Corollary 1.9]{HTate2}.
\end{proof}
\begin{rem} A key ingredient in the proof of the factorization at Steinberg places is that each prime ideal brings a single trivial zero.
\end{rem}

We consider now the case $V=V_{\mathrm{sta}}$. We have a contribution to trivial zeros from the $\pi_{\pfrak}$'s which are Steinberg and possibly from the $\pi_{\pfrak}$ which are spherical. In particular, if we choose the regular sub-module coming from an ordinary filtration, we always have a  trivial zero coming from each place. \\
For all $1 \leq s \leq \mathrm{min}(k-g-1,g-1)$ we have also $e_{\mathrm{Stb}}$ trivial zeros for $V(s)$.

\begin{theo}\label{TeoLinv}
For $\pi_{\pfrak}$ Steinberg we have
 \begin{align*}
{\Ll(V,D)}_{\pfrak} = -  \frac{1}{f_{\pfrak}}{\frac{\textup{d} \log_p\beta_{\pfrak,1}(k)}{\textup{d}k}}_{\vert_{k=\underline{k}}},
\end{align*} 
where $k$ is the parallel weight variable.\\
For $ 1 \leq s \leq \mathrm{min}(k-g-1,g-2)$ we also have 
 \begin{align*}
{\Ll(V(s), D(s))}_{\pfrak}= -  \frac{1}{f_{\pfrak}}{\frac{\textup{d} \log_p (\beta_{\pfrak,s-1} \beta_{\pfrak,s}^{-1}(k))} {\textup{d}k}}_{\vert_{k=\underline{k}}}
\end{align*} 
and if $g-1 \leq k-g-1$ we have 
\begin{align*}
{\Ll(V(g-1), D(g-1))}_{\pfrak}=  -  \frac{1}{f_{\pfrak}} {\frac{\textup{d} \log_p (\beta_{\pfrak,g-1}\beta_{\pfrak,g}^{-2}(k))} {\textup{d}k}}_{\vert_{k=\underline{k}}}. 
\end{align*}
\end{theo}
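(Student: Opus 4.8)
The plan is to reduce Theorem~\ref{TeoLinv} to the local computation carried out in Theorem~\ref{LinvM}. Since $\pi_\pfrak$ is Steinberg, we know that $W_\pfrak = M_\pfrak$ with $r_\pfrak = 1$ (the monodromy $N$ has maximal rank, so there is a single trivial zero at $\pfrak$), which places us exactly in the setting of Section~\ref{EffCal} where Theorem~\ref{LinvM} applies. By the factorization proposition proved just above, it suffices to compute each local factor ${\Ll(V,D)}_\pfrak = \Ll(M_\pfrak, M_{\pfrak,0})$ separately. So the first step is to identify the characters $\delta$ and $\psi$ (in the notation of Theorem~\ref{LinvM}) defining $M_{\pfrak,0}$ and $M_{\pfrak,1}$ as the two relevant graded pieces of $\boldD^\dagger_{\mathrm{rig}}(V_{\vert F_\pfrak})$ in the explicit triangulation recalled from \cite[\S 3.3]{HarJo}, and to determine their infinitesimal deformations $\delta_A$, $\psi_A$ along the parallel-weight direction $k$ inside the eigenvariety $\X$.

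\textbf{Step two: tracking the characters.} For $V = V_{\mathrm{sta}}$ and $\pi_\pfrak$ Steinberg, the relevant pair of graded pieces giving the trivial zero is the one where $\mathrm{gr}^1$ carries a character $\psi$ with $\psi(z) = \prod_\tau \tau(z)^{m_\tau}$, $m_\tau \le 0$, and $\mathrm{gr}^0$ (after the Tate twist built into the filtration $F_\bullet$) carries $\delta(z) = |\mathrm{N}_{L/\Q_p}(z)|_p \prod_\tau \tau(z)^{k_\tau}$, $k_\tau \ge 1$; the wild part of these characters at $\varpi_\pfrak$ is governed by $\beta_{\pfrak,1}$ (respectively the ratios $\beta_{\pfrak,s-1}\beta_{\pfrak,s}^{-1}$ and $\beta_{\pfrak,g-1}\beta_{\pfrak,g}^{-2}$ for the twists $V(s)$, $V(g-1)$) coming from the Hecke eigenvalue functions $\Theta(U_{\pfrak,j})$ on $\X$. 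Applying Theorem~\ref{LinvM}, one gets
\begin{align*}
{\Ll(V,D)}_\pfrak = -\log_p(\cycar(\gamma_{\Q_p})) \frac{{\textup{d}\log_p(\delta_A\psi_A^{-1})(p)}_{\vert_{T=0}}}{{\textup{d}\log_p(\delta_A\psi_A^{-1})(\cycar(\gamma_{\Q_p}))}_{\vert_{T=0}}}.
\end{align*}
The numerator records the variation of the value at $\varpi_\pfrak$ (or $p$, after normalizing by $f_\pfrak$), which is precisely $f_\pfrak^{-1} \textup{d}\log_p\beta_{\pfrak,1}(k)/\textup{d}k$ once one accounts for $\varpi_\pfrak^{f_\pfrak}$ generating $p$ up to a unit; the denominator records the variation of the Hodge--Tate weight along the parallel-weight line, and since the weights move linearly in $k$ one checks this contributes exactly $\log_p(\cycar(\gamma_{\Q_p}))$ times a rational constant that cancels, leaving the clean answer.

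\textbf{Step three: the twisted cases.} For $V(s)$ with $1 \le s \le \mathrm{min}(k-g-1,g-2)$ the same argument applies verbatim, except that the $\varpi_\pfrak$-value of the relevant character $\delta\psi^{-1}$ is now $\beta_{\pfrak,s-1}\beta_{\pfrak,s}^{-1}$ (read off from the list of $\varphi$-eigenvalues $\chi_{\pfrak,i}(\varpi_\pfrak) = \beta_{\pfrak,i-1}/\beta_{\pfrak,i}$), and for $s = g-1$ it is $\beta_{\pfrak,g-1}\beta_{\pfrak,g}^{-2} = \chi_{\pfrak,g}(\varpi_\pfrak)$; the Hodge--Tate side is unchanged up to the shift by $s$, which does not affect the derivative. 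One must check that the hypothesis of Theorem~\ref{LinvM}, namely ${\textup{d}\log(\delta_A\psi_A^{-1})(\cycar(\gamma_{\Q_p}))}_{\vert_{T=0}} \neq 0$, holds — this is where hypothesis \textbf{LGp} and the non-criticality/regularity at $\underline{k}$ enter, guaranteeing the eigenvariety is smooth of the expected dimension at $x$ and that the weight map is étale there, so the Hodge--Tate weight genuinely varies.

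\textbf{The main obstacle} I expect is bookkeeping rather than conceptual: correctly matching the normalization of $U_{\pfrak,i}$ (Definition~\ref{Upi}), the shift by $\lambda_\pfrak(u_{\pfrak,g-j})$ built into $\beta_{\pfrak,j} = \Theta(U_{\pfrak,j}\vert\lambda_\pfrak(u_{\pfrak,g-j})\vert_p)$, and the geometric normalization of local Langlands in \textbf{LGp}, so that the factor $1/f_\pfrak$ (and not $1/f_\pfrak$ times a stray power of $q_\pfrak$ or a sign) comes out, and so that the $\log_p(\cycar(\gamma_{\Q_p}))$ from Theorem~\ref{LinvM} cancels against the Hodge--Tate-weight derivative exactly. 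A secondary subtlety is to justify that along the parallel-weight sub-variety the infinitesimal deformations $\delta_A,\psi_A$ are the restrictions of the universal triangulation characters of Theorem~\ref{TeoLiu}, i.e.\ that differentiating the family character and then restricting to $G_{F_\pfrak}$ commutes with forming $\boldD^\dagger_{\mathrm{rig}}$; this follows from the density of crystalline classical points and \cite[Theorem 0.3.4]{Liu}, but needs to be invoked carefully.
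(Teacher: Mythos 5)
Your overall route is the same as the paper's: factor the $\Ll$-invariant into local terms at the Steinberg places, identify $M_{\pfrak,0}$ and $M_{\pfrak,1}$ with the two relevant graded pieces of the family triangulation (Theorem \ref{TeoLiu} and the explicit characters $\chi_{\pfrak,i}$), deform along the parallel-weight direction, apply Theorem \ref{LinvM}, and observe that the denominator only produces the factor $\log_p(\cycar(\gamma_{\Q_p}))$, which cancels after the change of variable $T\mapsto u^{k}-1$. This is exactly how the paper argues, and your treatment of the twists $V(s)$ and $V(g-1)$ (reading off $\beta_{\pfrak,s-1}\beta_{\pfrak,s}^{-1}$, resp.\ $\beta_{\pfrak,g-1}\beta_{\pfrak,g}^{-2}=\chi_{\pfrak,g}(\varpi_{\pfrak})$) matches the paper.

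The one genuine flaw is your explanation of the normalization $1/f_{\pfrak}$, which is the only nontrivial constant in the statement. You derive it from ``$\varpi_{\pfrak}^{f_{\pfrak}}$ generating $p$ up to a unit'', but $p$ is assumed unramified in $F$, so $e_{\pfrak}=1$ and $p$ itself is a uniformizer of $F_{\pfrak}$: that identity is false whenever $f_{\pfrak}>1$, and as written your bookkeeping would give no $1/f_{\pfrak}$ at all. The correct source is the discrepancy between the $\varphi$-eigenvalue of the rank-one triangulation parameter and the Hecke eigenvalue $\beta_{\pfrak,i}$, which is an eigenvalue of the $\Q_p$-linear power $\varphi^{f_{\pfrak}}$ on $\mathcal{D}_{\cris}$; the paper applies Theorem \ref{LinvM} to the $f_{\pfrak}$-th root of $\chi_{\pfrak,i}(\varpi_{\pfrak})$, so that $\textup{d}\log_p$ of the relevant value at $p$ is $\frac{1}{f_{\pfrak}}\,\textup{d}\log_p\beta_{\pfrak,1}(k)$ (and similarly for the ratios in the twisted cases). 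Relatedly, you do not need \'etaleness of the weight map or non-criticality to verify the hypothesis ${\textup{d}\log(\delta_A\psi_A^{-1})(\cycar(\gamma_{\Q_p}))}_{\vert_{T=0}}\neq 0$: along the parallel-weight family the difference of the Hodge--Tate exponents of $M_{\pfrak,0}$ and $M_{\pfrak,1}$ moves linearly in $k$ with nonzero slope, so the denominator is automatically nonzero; the \'etaleness hypothesis only enters in the adjoint computation of Section \ref{Ad}.
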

\begin{proof}
We apply Theorem \ref{LinvM} for the $f_{\pfrak}$-th root of $\chi_{\pfrak,i}(\varpi_{\pfrak})$ and we note that we can specialize to a parallel family, so that no contribution from the denominator appears. The $\log_p(u)$ disappear because of the change of variable $T \mapsto u^k -1$ ($u$ any topological generator of $\Z_p^{\times}$).
\end{proof}
\begin{rem}
The presence of $f_{\pfrak}$ in the denominator is explained in term of $L$-function and Euler factors at $p$ in \cite{HTate1}.
\end{rem} 
From now on, $V=V_{\mathrm{spin}}(k-1)$ ($s=k-1$ is the only critical integer); if $\pi_{\pfrak}$ is spherical it should not give any trivial zeros (as the corresponding $p$-adic representation is conjectured to be crystalline and consequently the $\beta_i$'s are Weil numbers of non-zero weight). \\ 
So we are left to see what happen at the primes Steinberg at $\pfrak$. Twisting by $\beta_{\pfrak,g}$ the triangulated $(\varphi,\G)$-module of $\rho_{\mathrm{spin}}$ we are in the hypothesis of Theorem \ref{LinvM} and we have 
\begin{theo}\label{TeoLinv2}
For $\pi_{\pfrak}$ Steinberg we have
 \begin{align*}
{\Ll(V,D)}_{\pfrak} = -  \frac{1}{f_{\pfrak}} {\frac{\textup{d} \log_p \beta_{\pfrak,1}(k)}{\textup{d}k}}_{\vert_{k=\underline{k}}},
\end{align*} 
where $k$ is a parallel weight variable.
\end{theo}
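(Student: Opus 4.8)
The plan is to reduce the statement to the local deformation formula of Theorem \ref{LinvM}, following verbatim the pattern of the proof of Theorem \ref{TeoLinv} but with the spinorial triangulation recalled above in place of the standard one; hypothesis {\bfseries LGp} for $V_{\spin}$ is used to describe $\boldD^{\dagger}_{\mathrm{rig}}({V_{\spin}}_{\vert_{F_{\pfrak}}})$ and, via Theorem \ref{TeoLiu}, to triangulate the big representation $\rho_{\spin}$ along the family of Theorem \ref{teoFami}. First I would pin down the local structure at a Steinberg place $\pfrak$: since $\pi_{\pfrak}$ is Steinberg the monodromy on $\boldD^{\dagger}_{\mathrm{rig}}({V_{\spin}}_{\vert_{F_{\pfrak}}})$ has maximal rank, the regular submodule $D_{\pfrak}$ is unique, and---arguing as in \cite[Proposition 2.1.7]{BenLinv} with conditions {\bfseries C3}, {\bfseries C4}, {\bfseries C5} and the running assumption that each prime above $p$ contributes at most one trivial zero---the piece $W_{\pfrak}$ of Section \ref{SecLinv} equals $M_{\pfrak}$, a rank-two non-split extension
\begin{align*}
0 \rightarrow M_{\pfrak,0}=\rr_{F_{\pfrak}}(\delta) \rightarrow M_{\pfrak} \rightarrow M_{\pfrak,1}=\rr_{F_{\pfrak}}(\psi) \rightarrow 0
\end{align*}
of the type treated in Theorem \ref{LinvM}.

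Next I would identify $\delta$ and $\psi$ explicitly as the two relevant adjacent graded pieces of the triangulation, after the critical Tate twist $z^{k-1}$ and the twist by $\beta_{\pfrak,g}$: using the explicit triangulation and the Steinberg relation $\alpha_{\pfrak,i}(\varpi_{\pfrak})=q_{\pfrak}^{i}\alpha_{\pfrak,1}(\varpi_{\pfrak})$, one checks that $\delta\psi^{-1}$ is algebraic of the expected weight on $\oo_{F_{\pfrak}}^{\times}$, that $\delta\psi^{-1}(\varpi_{\pfrak})$ equals $\beta_{\pfrak,1}(\underline{k})$ up to a weight-independent power of $q_{\pfrak}$, and that the $\vert\NN_{F_{\pfrak}/\Q_p}(\cdot)\vert_{p}$-factor in $\delta$ is precisely the contribution of the monodromy, so that $M_{\pfrak}$ has exactly the shape demanded by Theorem \ref{LinvM}. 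I would then produce the infinitesimal deformation by restricting $\rho_{\spin}$ to the one-parameter parallel-weight subvariety of $\X$ through $\underline{k}$ and taking the first jet at $\underline{k}$, obtaining deformations over $A=E[T]/(T^{2})$ whose characters $\delta_A,\psi_A$ satisfy that $\delta_A\psi_A^{-1}(\varpi_{\pfrak})$ interpolates $\beta_{\pfrak,1}(k)$ up to the same power of $q_{\pfrak}$. Along a parallel family the algebraic part genuinely varies, so ${\textup{d}\log(\delta_A\psi_A^{-1})(\cycar(\gamma_{\Q_p}))}_{\vert_{T=0}}$ is a nonzero multiple of $\log_p(u)$ for $u$ a topological generator of $\Z_p^{\times}$, and the genericity hypothesis of Theorem \ref{LinvM} holds.

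Applying that theorem gives
\begin{align*}
\Ll(V_{\spin}(\underline{k}-1),D)_{\pfrak}= -\log_p(\cycar(\gamma_{\Q_p}))\,\frac{{\textup{d}\log_p(\delta_A\psi_A^{-1})(p)}_{\vert_{T=0}}}{{\textup{d}\log_p(\delta_A\psi_A^{-1})(\cycar(\gamma_{\Q_p}))}_{\vert_{T=0}}},
\end{align*}
and it remains to simplify: the change of variable $T\mapsto u^{k}-1$ cancels the $\log_p(u)$ factors, the weight-independent power of $q_{\pfrak}$ drops out of $\textup{d}\log_p$, and passing to the $f_{\pfrak}$-th root of $\chi_{\pfrak,1}(\varpi_{\pfrak})$---the crystalline Frobenius eigenvalue over $F_{\pfrak}$ being an eigenvalue of $\varphi^{f_{\pfrak}}$---produces the factor $1/f_{\pfrak}$, leaving $-\frac{1}{f_{\pfrak}}{\frac{\textup{d}\log_p\beta_{\pfrak,1}(k)}{\textup{d}k}}_{\vert_{k=\underline{k}}}$ as claimed.

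The hard part will be the bookkeeping in the first two steps: matching correctly the critical twist $z^{k-1}$, the twist by $\beta_{\pfrak,g}$, the ordering of the spin graded pieces $\delta_{\pfrak,\eps}$, and the monodromy-induced $\vert\NN_{F_{\pfrak}/\Q_p}\vert_{p}$-factor so that $M_{\pfrak}$ lands precisely in the form required by Theorem \ref{LinvM}; and, closely related, checking that the genericity hypothesis of that theorem is non-vacuous on the parallel-weight line, i.e.\ that the restriction of the weight map to that line is unramified at $\underline{k}$ in the relevant direction. Everything else is formal, being identical to the standard-representation case of Theorem \ref{TeoLinv}.
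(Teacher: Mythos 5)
Your proposal follows essentially the same route as the paper: the paper's entire argument for this theorem is that, after twisting the triangulation of $\rho_{\spin}$ by $\beta_{\pfrak,g}$, one is in the situation of Theorem \ref{LinvM} and then proceeds exactly as in the proof of Theorem \ref{TeoLinv} (apply that theorem to the $f_{\pfrak}$-th root of the relevant Frobenius eigenvalue, specialize to a parallel family so the denominator contributes nothing, and remove $\log_p(u)$ by the change of variable $T\mapsto u^k-1$). Your write-up just makes explicit the bookkeeping (uniqueness of $D_{\pfrak}$ at Steinberg places, $W_{\pfrak}=M_{\pfrak}$ of rank two, identification of $\delta\psi^{-1}(\varpi_{\pfrak})$ with $\beta_{\pfrak,1}$ up to a weight-independent factor) that the paper leaves implicit, so it is correct and matches the paper's proof.
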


%%%%%%%%%%%%%%%%%%%%%%%%

\section{The case of the adjoint representation}\label{Ad}
We prove Theorem \ref{teoAd} of the introduction.  We consider only the case $g=2$. Fix an automorphic representation $\pi$ of weight $\underline{k}=(\underline{k}_{\tau,1},\ldots,\underline{k}_{\tau,g};\underline{k}_{0})_{\tau}$ and let $V=V_{\spin}$ be the spin representation associated with $\pi$. Let $\rho=\rho_{\spin}$ be the corresponding big Galois representation.\\

We specializes the eigenvariety $\X$ of Theorem \ref{teoFami} to the subspace of the weight space given by the equations $k_{\tau,i}=k_{\tau',i}$ if $\tau$ and $\tau'$ induce the same $p$-adic place $\pfrak$ and $k_0=\underline{k}_{0}$. We shall denote the new variable by $k_{\pfrak,i}$ and this eigenvariety by $\X'$. For simplicity, we rewrite the graded pieces of $V$ as 
\begin{align*}
\delta_{\pfrak,1}(\varpi_{\pfrak})=  F_{\pfrak,1}^{-1}(k),&\;\; \delta_{\pfrak,1}(u)= \NN_{F_{\pfrak}/\Q_p}(u)^{\frac{k_0
+ k_{\pfrak,1}+k_{\pfrak,2}-3}{2}}, \\ 
\delta_{\pfrak,2}(\varpi_{\pfrak})=  F_{\pfrak,2}^{-1}(k),&\;\; \delta_{\pfrak,2}(u)= \NN_{F_{\pfrak}/\Q_p}(u)^{\frac{k_0 + k_{\pfrak,2}-k_{\pfrak,1}+1}{2}}, \\
\delta_{\pfrak,3}(\varpi_{\pfrak})= F_{\pfrak,2}(k),&\;\; \delta_{\pfrak,3}(u)= \NN_{F_{\pfrak}/\Q_p}(u)^{\frac{k_0-k_{\pfrak,2}+k_{\pfrak,1}-1}{2}},  \\ 
\delta_{\pfrak,4}(\varpi_{\pfrak})=  F_{\pfrak,1}(k),&\;\; \delta_{\pfrak,4}(u)= \NN_{F_{\pfrak}/\Q_p}(u)^{\frac{k_0 -k_{\pfrak,1}-k_{\pfrak,2}+3}{2}}
\end{align*}
where $k={(k_{\pfrak,1},k_{\pfrak_2};k_0)}_{\pfrak}$. \\
The representation space of $\Ad(V)$ is given by the matrices 
\begin{align*}
\mathfrak{Sp}_4=\lgr X \in \mathfrak{SL}_4 \vert XJ^t + JX=0\rgr.
\end{align*}
The $p$-stabilization on $V$ induces a natural $p$-stabilization and consequently a regular sub-module $D_{\Ad}$ on $\Ad(V_{\spin})$. 
We have 
\begin{align*}
{D_{\Ad}}_{-1} =  & \lgr \mbox{nilpotent }X\rgr, \\
{D_{\Ad}}_0=  & \lgr \mbox{unipotent }X\rgr. 
\end{align*}
The basis for the space ${D_{\Ad}}_0 / {D_{\Ad}}_{-1}$ is given by the two diagonal matrices $d_1=[-1,0,0,1]$ and $d_2=[0,-1,1,0]$. We shall denote by $d_{\pfrak,i}$ these matrices when seen as a vector for $\Ad(V_{\pfrak})$.
\begin{prop}\label{PropImp}
Suppose that {\bfseries{C1-C4}} holds for $V$. Suppose that the classical $E$-point $x$ in the eigenvariety $\X'$ corresponding to $\pi$ is \'etale above the weight space. Then, the space $\Ll(D_{\Ad},V)$ is generated by the image of $\left(\frac{\textup{d}\log_p \delta_{\pfrak,i}} {\textup{d} k_{\pfrak',j}}d_{\pfrak,i}\right)_{\pfrak', j=1,2}$.
\end{prop}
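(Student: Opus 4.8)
The plan is to reduce the computation of the space $\Ll(D_{\Ad},V)$ to a family-theoretic construction of cocycles, in the spirit of Theorem \ref{LinvM} but now in a $2t$-variable setting. First I would record that, because we are in the case $W_{\pfrak}=M_{\pfrak}$ (no eigenvalue-$1$ part appears here once we have twisted correctly), the filtration of Section \ref{defLinv} applied to $\Ad(V_{\spin})$ yields $W_{\pfrak}=M_{\pfrak}$ a successive extension whose two relevant graded pieces at $\pfrak$ are indexed by the diagonal matrices $d_{\pfrak,1}=[-1,0,0,1]$ and $d_{\pfrak,2}=[0,-1,1,0]$. Concretely, the ``up-to-crystalline'' part of $\boldD^{\dagger}_{\mathrm{rig}}(\Ad(V_{\spin})_{\vert_{F_{\pfrak}}})$ is built from the rank-one $(\varphi,\G_{F_{\pfrak}})$-modules $\rr_{F_{\pfrak}}(\delta_{\pfrak,i}\delta_{\pfrak,i'}^{-1})$ for the pairs $(i,i')$ realizing $d_{\pfrak,1}$ and $d_{\pfrak,2}$, so that the relevant characters entering the $\Ll$-invariant are exactly $\delta_{\pfrak,i}$ up to the (weight-independent) crystalline twists. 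By Proposition \ref{dimH^i} and \ref{Mstruct}, the cohomology $H^1(D_{\Ad},V)$ has the expected dimension $2t$, and by the definition (Definition \ref{Linvar}) the invariant is the determinant of $\iota_{\mathrm{f}}\circ\iota_{\mathrm{c}}^{-1}$ on $\mathcal{D}_{\mathrm{cris}}(W_1)\cong E^{2t}$ with basis given by the $d_{\pfrak,i}$'s.

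Next I would construct the cocycles. The étaleness hypothesis over the weight space says that the $2t$ weight coordinates $k_{\pfrak',j}$ ($\pfrak'$ ranging over the $t$ primes, $j=1,2$) give local coordinates on $\X'$ at $x$; hence for each pair $(\pfrak',j)$ we obtain an infinitesimal deformation $\rho^{(\pfrak',j)}$ of $V_{\spin}$ over $E[\varepsilon]/(\varepsilon^2)$, namely the derivative of the big Galois representation $\rho_{\spin}$ in the direction $\partial/\partial k_{\pfrak',j}$. Each such deformation, composed with $\Ad$, gives an infinitesimal deformation of $\Ad(V_{\spin})$, and restricting to $G_{F_{\pfrak}}$ and passing to the triangulation (which deforms, by Theorem \ref{TeoLiu}/Theorem \ref{TeoLiu} applied on $\X'$) produces, exactly as in the proof of Theorem \ref{LinvM}, a class in $H^1(F_1\boldD^{\dagger}_{\pfrak}(\Ad V))\cap\mathrm{Ind}_{\pfrak}$ whose image in $\mathcal{D}_{\mathrm{cris}}(W_{\pfrak,1})$ is computed by formulas \eqref{B_1^0a}--\eqref{B_1^0b}. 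The key point is that the connecting map $B_1^0$ applied to the graded piece attached to $d_{\pfrak,i}$ produces the vector $\textup{d}\log_p\delta_{\pfrak,i}(\varpi_{\pfrak})\,x+\textup{d}\log_p\delta_{\pfrak,i}(\cycar(\gamma))\,y$, and since $\delta_{\pfrak,i}(u)$ is a power of $\NN_{F_{\pfrak}/\Q_p}(u)$ depending \emph{linearly} on the $k_{\pfrak,j}$'s, the ``$y$''-component is (up to the universal constant $\log_p(\cycar(\gamma))$ and the factor $f_{\pfrak}$) the derivative $\partial/\partial k_{\pfrak',j}$ of that linear function, and the ``$x$''-component involves $\partial\log_p\delta_{\pfrak,i}(\varpi_{\pfrak})/\partial k_{\pfrak',j}$. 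After these $2t$ deformations one has $2t$ classes spanning $H^1(D_{\Ad},V)$ (étaleness guarantees independence), and projecting them to $\iota_{\mathrm{f}}$ resp.\ $\iota_{\mathrm{c}}$ gives the two matrices whose ratio is $\Ll(D_{\Ad},V)$.

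Finally I would identify the space: by the above, the image of $\left(\frac{\textup{d}\log_p\delta_{\pfrak,i}}{\textup{d}k_{\pfrak',j}}\,d_{\pfrak,i}\right)$ under $\iota_{\mathrm{f}}$ is precisely $\iota_{\mathrm{f}}$ applied to the deformation cocycle, while the $\iota_{\mathrm{c}}$-component is, by the linearity noted above, essentially the identity matrix (times universal constants), so $\Ll(D_{\Ad},V)$ is generated exactly by those vectors; this is the assertion of the proposition. The main obstacle I anticipate is purely bookkeeping: correctly matching the pairs of graded characters $(\delta_{\pfrak,i},\delta_{\pfrak,i'})$ of $\boldD^{\dagger}_{\mathrm{rig}}(V_{\spin})$ to the diagonal basis vectors $d_{\pfrak,1},d_{\pfrak,2}$ of $\mathfrak{Sp}_4$ and keeping track of the constants $f_{\pfrak}$, $\log_p(\cycar(\gamma))$ and the sign in Theorem \ref{LinvM} — together with checking that the triangulation of $\rho_{\spin}$ persists over the $2t$-dimensional $\X'$ (so that Theorem \ref{TeoLiu} applies on the restricted eigenvariety and the graded pieces are the advertised $\rr_{\X'}(\delta_{\pfrak,i})$), and that conditions \textbf{C1--C4} for $V$ indeed transfer to $\Ad(V_{\spin})$ in the form needed for Proposition \ref{Mstruct} to apply at every $\pfrak$.
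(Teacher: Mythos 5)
Your plan follows essentially the same route as the paper's own proof: use the étaleness hypothesis to take the $2t$ weight directions as deformation directions, form the Mazur--Tilouine-style infinitesimal deformation cocycles valued in $\Ad(V)$ from the family $\rho_{\spin}$ over $\X'$, and read off the $\iota_{\mathrm{f}}$- and $\iota_{\mathrm{c}}$-projections from the (triangular) deformed $(\varphi,\G)$-module, the $\iota_{\mathrm{f}}$-component being $\textup{d}\log_p$ of the Frobenius eigenvalues and the $\iota_{\mathrm{c}}$-component the constant derivative of the weight characters. This matches the paper's argument, so no further comparison is needed.
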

\begin{proof}
The proof is standard and goes back to \cite{MT}, so we shall only sketch it. Let $A=E[T]/(T^2)$. Consider an infinitesimal deformation of $\rho$ given by 
\begin{align*}
\rho_{A} = V \oplus \rho';
\end{align*}
note that $\rho'$ can be written as the first order truncation of $\frac{\partial \rho}{\partial v}$, where $v$ is any direction in the weight space.\\
From $\rho_A$ we can construct a cocyle $c_{x,A}$ defined by \begin{align*} G_F \ni\sigma \mapsto \rho'(\sigma) V^{-1}(\sigma).
\end{align*}
It is easy to check that this defines a cocycle with values in $V \otimes V^*$. Moreover its image lands in $\Ad(V) \subset V \otimes V^*$ as the determinant is fixed (by our choice of the Hodge-Tate weight on $\X'$). Writing explicitly the matrix for the $(\varphi,\G)$-module associated with $\rho_A$ we obtain
$$
{\left(\begin{array}{cccc}
\frac{\partial  \delta_{\pfrak,1}}{\partial v}& * & * & *\\
 & \frac{\partial  \delta_{\pfrak,2}}{\partial v}& * & * \\
 & & \frac{\partial  \delta_{\pfrak,3}}{\partial v}& * \\
 & & & \frac{\partial  \delta{\pfrak,4}}{\partial v} 
\end{array}\right)}_{\vert_{k=\underline{k}}}
{\left(\begin{array}{cccc}
  \delta_{\pfrak,1}^{-1}& * & * & *\\
 &  \delta_{\pfrak,2}^{-2}& * & * \\
 & &  \delta_{\pfrak,3}^{-1}& * \\
 & & &  \delta_{\pfrak,4}^{-1} 
\end{array} \right)}_{\vert_{k=\underline{k}}}
$$
In particular, they are upper triangular and their projection via $\iota_{\mathrm{f}}$ onto the vector $d_{\pfrak,1}$ is ${\frac{\textup{d}\log_p F_{\pfrak,1}(k)}{\textup{d} v}}_{\vert_{k=\underline{k}}} $. Similarly for $d_{\pfrak,2}$. \\
We also have that the projection via $\iota_{\mathrm{c}}$  onto $d_{\pfrak,1}$ is $-{\frac{\partial (k_{\pfrak,1}+k_{\pfrak,2})/2}{\partial v}}_{\vert_{k=\underline{k}}}$. \\
This cocycle lies $H^1(G_{F,S},V\otimes V^*)$ by construction of $\rho$.\\
As $\lgr \frac{\partial}{\partial k_{\pfrak,i}} \rgr_{\pfrak, i=1,2}$ is a base of the tangent space at $x$ in $\X'$ we are done.
\end{proof}
We can now prove Theorem \ref{teoAd} which we recall now;
\begin{theo}
We have 
\begin{align*}
\Ll(\mathrm{Ad}(V_{\spin}),D_{\Ad}) =\prod_{\pfrak} \frac{2}{f^2_{\pfrak}} {\mathrm{det}{\left(\begin{array}{cc}
\frac{\partial \log_p F_{\pfrak_i,1}(k)}{\partial k_{\pfrak_j,1}} & \frac{\partial \log_p F_{\pfrak_i,2}(k)}{\partial k_{\pfrak_j,1}} \\
\frac{\partial \log_p F_{\pfrak_i,1}(k)}{\partial k_{\pfrak_j,2}} & \frac{\partial \log_p F_{\pfrak_i,2}(k)}{\partial k_{\pfrak_j,2}}
\end{array}  \right)}_{1 \leq i,j \leq t}}_{\vert_{k=\underline{k}}}.
\end{align*}
\end{theo}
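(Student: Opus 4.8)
The idea is to combine Proposition \ref{PropImp} with the explicit normalization of the graded characters $\delta_{\pfrak,i}$ given just before the theorem, and then read off the determinant. First I would recall that, by Proposition \ref{PropImp}, the space $\Ll(D_{\Ad},V)$ is spanned inside $\mathcal{D}_{\mathrm{cris}}(\Ad(V))$ (a $2t$-dimensional $E$-space with basis the $d_{\pfrak,i}$ appearing twice, once for $\iota_{\mathrm{f}}$ and once for $\iota_{\mathrm{c}}$) by the $2t$ vectors obtained by letting $v$ range over the tangent directions $\partial/\partial k_{\pfrak',j}$ ($\pfrak'$ a prime above $p$, $j=1,2$). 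For each such direction $v$ the corresponding cocycle projects via $\iota_{\mathrm{f}}$ onto $d_{\pfrak,i}$ with coefficient $\tfrac{\textup{d}\log_p F_{\pfrak,i}(k)}{\textup{d}v}|_{k=\underline{k}}$ (note the normalizations $\delta_{\pfrak,1}(\varpi_{\pfrak})=F_{\pfrak,1}^{-1}$, $\delta_{\pfrak,2}(\varpi_{\pfrak})=F_{\pfrak,2}^{-1}$, $\delta_{\pfrak,3}(\varpi_{\pfrak})=F_{\pfrak,2}$, $\delta_{\pfrak,4}(\varpi_{\pfrak})=F_{\pfrak,1}$, so the two diagonal entries of $\Ad$ landing on $d_{\pfrak,1}$, $d_{\pfrak,2}$ involve precisely $F_{\pfrak,1}$ and $F_{\pfrak,2}$), and via $\iota_{\mathrm{c}}$ onto $d_{\pfrak,1}$, $d_{\pfrak,2}$ with coefficients computed from the $u$-part of the $\delta$'s.

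Next I would carry out the bookkeeping on the $\iota_{\mathrm{c}}$ side. Since the $f_{\pfrak}$-th root of $\chi$ is what enters (as in Theorem \ref{TeoLinv}, the normalized Hecke eigenvalue at $\pfrak$ contributes with a factor $1/f_{\pfrak}$ because $\varphi^{f_{\pfrak}}$ rather than $\varphi$ governs $\mathcal{D}_{\mathrm{cris}}$), and since the $u$-components $\delta_{\pfrak,i}(u)=\NN_{F_{\pfrak}/\Q_p}(u)^{e_{\pfrak,i}(k)}$ with $e_{\pfrak,1}=\tfrac{k_0+k_{\pfrak,1}+k_{\pfrak,2}-3}{2}$, $e_{\pfrak,2}=\tfrac{k_0+k_{\pfrak,2}-k_{\pfrak,1}+1}{2}$ are linear in $k$, the $\iota_{\mathrm{c}}$-projection of the cocycle in direction $\partial/\partial k_{\pfrak',j}$ onto $d_{\pfrak,i}$ is the constant $\delta_{\pfrak\pfrak'}$ times $\partial e_{\pfrak,i}/\partial k_{\pfrak,j}$, up to the $\log_p(u)$ factor which I would absorb into the change of variable $T\mapsto u^k-1$ exactly as in the proof of Theorem \ref{TeoLinv}. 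The $2\times 2$ block for a fixed $\pfrak$ is then $\tfrac12\!\left(\begin{smallmatrix}1&-1\\1&1\end{smallmatrix}\right)$ (from the Jacobian of $(e_{\pfrak,1},e_{\pfrak,2})$ with respect to $(k_{\pfrak,1},k_{\pfrak,2})$ at fixed $k_0$), whose determinant is $\tfrac12$; this block is diagonal in $\pfrak$ because $\iota_{\mathrm{c}}$, being the ``$\log_p$''-type projection, only sees the local Hodge–Tate weight at $\pfrak$. Thus $\iota_{\mathrm{c}}$ is block-diagonal over the primes $\pfrak\mid p$ with each block of determinant $\tfrac{1}{f_{\pfrak}^{?}}\cdot\tfrac12$ after the $f_{\pfrak}$-normalization — here one must be careful that each of the two coordinates $d_{\pfrak,1},d_{\pfrak,2}$ contributes one factor $f_{\pfrak}$, giving $f_{\pfrak}^2$ in the denominator of $\det(\iota_{\mathrm{f}}\circ\iota_{\mathrm{c}}^{-1})$ per prime, which matches the claimed $\tfrac{2}{f_{\pfrak}^2}$ (the $2 = (1/2)^{-1}$).

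Finally, since $\Ll(D_{\Ad},V)=\det(\iota_{\mathrm{f}}\circ\iota_{\mathrm{c}}^{-1})$ in the basis $(x_m,y_m)$ (Definition \ref{Linvar}), and both $\iota_{\mathrm{f}}$ and $\iota_{\mathrm{c}}$ have been expressed as $2t\times 2t$ matrices organized into $t$ blocks of size $2$ indexed by $\pfrak$, I would factor the determinant as a product over $\pfrak$: the $\iota_{\mathrm{c}}$-matrix is block-diagonal, and $\iota_{\mathrm{f}}$ has $(\pfrak_i,\pfrak_j)$-block $\left(\begin{smallmatrix}\partial\log_p F_{\pfrak_i,1}/\partial k_{\pfrak_j,1}&\partial\log_p F_{\pfrak_i,2}/\partial k_{\pfrak_j,1}\\ \partial\log_p F_{\pfrak_i,1}/\partial k_{\pfrak_j,2}&\partial\log_p F_{\pfrak_i,2}/\partial k_{\pfrak_j,2}\end{smallmatrix}\right)$, so $\det(\iota_{\mathrm{f}}\circ\iota_{\mathrm{c}}^{-1})=\prod_{\pfrak}\tfrac{2}{f_{\pfrak}^2}\cdot\det\big(\partial\log_p F_{\pfrak_i,l}/\partial k_{\pfrak_j,m}\big)_{1\le i,j\le t}$, evaluated at $k=\underline{k}$, which is the asserted formula. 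The main obstacle I anticipate is not the block structure but pinning down the exact scalar — in particular making sure the $\iota_{\mathrm{c}}$ block is $\tfrac12\!\left(\begin{smallmatrix}1&-1\\1&1\end{smallmatrix}\right)$ rather than its transpose or a sign-twisted variant, and tracking the $f_{\pfrak}$-powers correctly through the passage $\varphi\rightsquigarrow\varphi^{f_{\pfrak}}$ and through the two-dimensionality of each local block (so that one genuinely gets $f_{\pfrak}^{-2}$, not $f_{\pfrak}^{-1}$); this requires care with the normalization of $\mathrm{cl}(t^m,0)$ versus $\tfrac1{e_{F_{\pfrak}}}\log\cycar(\gamma)\,\mathrm{cl}(0,t^m)$ in the basis $x_m,y_m$ of Section \ref{reminder}, just as in the genus-one computation of \cite{HTate2}.
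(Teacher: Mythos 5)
Your proposal is correct and follows essentially the same route as the paper: it rests on Proposition \ref{PropImp}, identifies the matrix of $\iota_{\mathrm{f}}$ with the Jacobian $\bigl(\partial\log_p F_{\pfrak_i,l}/\partial k_{\pfrak_j,m}\bigr)$, and computes $\iota_{\mathrm{c}}$ blockwise from the $u$-parts of the $\delta_{\pfrak,i}$ (the paper does this via $\textup{d}\log(u^{\pm k_{\pfrak,i}})/\textup{d}k_{\pfrak',j}=\pm\delta_{\pfrak,\pfrak'}\delta_{i,j}$, following \cite[Theorem 3.73]{HIwa}, each prime contributing the factor $2^{-1}$ and the $f_{\pfrak}$-normalization the factor $f_{\pfrak}^{2}$). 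Your explicit $\tfrac12\left(\begin{smallmatrix}1&-1\\1&1\end{smallmatrix}\right)$ block and the $f_{\pfrak}^{2}$ bookkeeping are exactly the content of that step, so there is nothing genuinely different to compare.
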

\begin{proof}
Once we have Proposition \ref{PropImp}, we just have to follow the proof of \cite[Theorem 3.73]{HIwa}. The matrix of $\iota_{\mathrm{f}}$ is exactly what appears in the Theorem, while the matrix of $\iota_{\mathrm{c}}$ can be directly calculated using the formula $\frac{\textup{d}\log(u^{\pm k_{\pfrak,i}})}{\textup{d} k_{\pfrak',j}}= \pm \delta_{\pfrak,\pfrak'}\delta_{i,j}$ (where $\delta_{a,b}$ here is Kronecker delta) and gives a contribution of $2^{-1}$ for each prime ideal $\pfrak$.
\end{proof}
\bibliographystyle{alpha}
\bibliography{Bibliografy} 
\Addresses
\end{document}